\documentclass[12pt]{amsart}
\usepackage{amsfonts}
\usepackage{tikz}
\usepackage{ifthen}
\usepackage{amsmath}
\usepackage{amsthm}
\usepackage{amssymb}
\usepackage{tikz-cd} 
\usepackage{bbm}
\theoremstyle{plain}
\newtheorem{theorem}{\bf{Theorem}}[section]
\newtheorem{lemma}[theorem]{Lemma}
\newtheorem{proposition}[theorem]{Proposition}
\newtheorem{corollary}[theorem]{Corollary}
\newtheorem{remark}[theorem]{Remark}

\newtheorem{definition}[theorem]{Definition}

\usepackage[top=.5 in,bottom=.5 in, left=.5 in, right= .5 in]{geometry}

\setlength{\topmargin}{-0.8in} \setlength{\textheight}{9.25in} \setlength{\oddsidemargin}{0.0in}
\setlength{\evensidemargin}{0.0in} \setlength{\textwidth}{6.5in}

\newcommand{\bX}{\mathbb{X}}
\newcommand{\bY}{\mathbb{Y}}
\newcommand{\bZ}{\mathbb{Z}}
\newcommand{\bV}{\mathbb{V}}
\newcommand{\bU}{\mathbb{U}}
\newcommand{\bI}{\mathbb{I}}
\newcommand{\bT}{\mathbb{T}}
\newcommand{\bR}{\mathbb{R}}
\newcommand{\bRx}{\left(z\bI-\bX\right)^{-1}}
\newcommand{\gxy}{G_{\bX+\bY}(z)}

\newcommand{\gx}{G_{\bX}(z)}

\newcommand{\oj}{\omega_1(z)}
\newcommand{\od}{\omega_2(z)}
\newcommand{\cgw}{\textrm{C}^\ast}
\newcommand{\calA}{\mathcal{A}}

\numberwithin{equation}{section}

\begin{document}
	\title[The Matsumoto-Yor property in free probability]{The Matsumoto-Yor property in free probability via subordination and Boolean cumulants}

	\author[M.\'{S}wieca]{Marcin \'{S}wieca}
	\address{Wydzia{\l} Matematyki i Nauk Informacyjnych\\
		Politechnika Warszawska\\
		ul. Ko\-szy\-ko\-wa 75\\
		00-662 Warsaw, Poland} \email{M.Swieca@mini.pw.edu.pl}
			
	\keywords{}
	
	\date{\today}

	\begin{abstract}
We study the Matsumoto-Yor property in free probability. We prove three characterizations of free-GIG and free Poisson distributions by freeness properties together with some assumptions about conditional moments. Our main tools are subordination and Boolean cumulants. In particular, we establish a new connection between additive subordination function and Boolean cumulants.
	\end{abstract}
	
	\maketitle
\section{Introduction}

 In \cite{matsumoto2001analogue} authors observed an interesting property of Gamma and Generalized Inverse Gaussian (GIG) laws that is now known in literature as the Matsumoto-Yor property: If $X$ has the Generalized Inverse Gaussian law  $GIG(-p,a,b)$, $Y$ has the  Gamma law  $G(p,a)$ and  $X$ and $Y$ are independent random variables, then $$U=\frac{1}{X+Y}\ \textrm{and}\  V=\frac{1}{X}-\frac{1}{X+Y}$$
are also independent and distributed according to   $GIG(-p,b,a)$ and $G(p,b)$ laws respectively. 

We recall that the Gamma law $G(p,a)$  with parameters $p,a>0$ is a probability measure that has the density
$$\frac{a^p}{\Gamma(p)}x^{p-1}e^{-ax}\mathbbm{1}_{(0,\infty)}(x)$$
and the Generalized Inverse Gaussian law $GIG(p,a,b)$ with parameters $a,b>0$, $p\in\mathbb{R}$  is a probability  measure that has density $$\frac{(a/b)^{p/2}}{2K_p(2\sqrt{ab})}x^{p-1}e^{-ax-\frac{b}{x}}\mathbbm{1}_{(0,\infty)}(x),$$
where $K_p$ is s  modified Bessel function of the third kind.

Later it was shown in \cite{letac2000independence} that independence of $X$ and $Y$ and independence of $U$ and $V$ characterizes Gamma and GIG laws. In the same paper authors generalized the Matsumoto-Yor property to the framework of real symmetric matrices. Further generalizations of different nature can be found for example in \cite{MassamWeso}, \cite{bao2021characterizations} and \cite{kolodziejek2017matsumoto}.

The analogue of the Matsumoto-Yor property in free probability  was studied in \cite{szpojankowski2017matsumoto}. In this case the property states that if  $\bX, \bY$ are free non-commutative random variables and have free-GIG and Marchenko-Pastur distributions respectively  (with suitably chosen parameters), then 
the random variables $$\bU=(\bX+\bY)^{-1}\ \textrm{and}\ \bV=\bX^{-1}-(\bX+\bY)^{-1}$$ are also free and have free GIG and Marchenko-Pastur distribution.
It was shown in \cite{szpojankowski2017matsumoto} that freenes of $\bX$ na $\bY$ and freenes of $\bU$ and $\bV$  characterizes free-GIG and Marchenko-Pastur laws.

In this paper we  study regression versions of the above characterization,  assuming only constant regressions 
\begin{equation} \left\{\begin{array}{rcl}
\varphi\left(\bV^k\mid\bU\right)&=&m_k\bI,\\
\varphi\left(\bV^l\mid\bU\right)&=&m_l\bI,\\
\end{array} \right.\label{jedenjeden}
\end{equation}
where $k,l\in\mathbb{Z}$ are non zero, $k\neq l$ and $m_k,m_l\in \mathbb{R}$ are some constants. The cases we consider are $(k,l)=(1,2),(1,-1), (-1,-2)$. The case $(k,l)=(-1,1)$ was also studied \cite{szpojankowski2017matsumoto} but the author used a different method based on the moment transform. In classical probability the same case $(k,l)=(-1,1)$ was considered first in \cite{MR1888812} and the remaining cases were considered in \cite{MR2091757}

Our  main tools are subordination of free convolutions  and Boolean cumulants. Subordination is a powerful technique  first used in \cite{biane1998processes} and then enhanced considerably in \cite{voiculescu2000coalgebra}. Roughly speaking for the free additive convolution one has that conditional expectation of the resolvent $(z-\bX-\bY)^{-1}$ onto the algebra generated by $\bX$ is the resolvent of $\bX$ at different point $\oj$, where $\omega_1$ is an analytic selfmap of the upper half-plane $\mathbb{C}^+$. Subordination proved to be very useful in studying properties of free convolutions (see eg. \cite{belinschi2007new, belinschi2006note, belinschi2008lebesgue}) and in random matrix theory (see \cite{belinschi2017outliers}). It was also observed that subordination is useful in regression characterization problems (cf. \cite{ejsmont2017convolution}). For introduction to subordination results we recommend Chapter 2 of \cite{mingofree}.

Powerful as it is, subordination itself does not allow to prove all the results which we are studying here. We take advantage of connections between free probability and Boolean cumulants established recently in \cite{fevrier2020using, lehner2019boolean}. We develop ideas from \cite{lehner2019boolean}, in particular we provide a new expansion of the reciprocal of the additive subordination function in terms of Boolean cumulants  \begin{equation}
\frac{1}{\od}=\sum_{n=0}^\infty\beta_{2n+1}\left(\bRx,\bY,\bRx,\bY,\ldots,\bY,\bRx\right).
\end{equation}
(See Lemma \ref{techlemfor1} for more details.)

One of the implicit results of this paper is a methodological remark, that Boolean cumulants prove to be useful tool when dealing with conditional expectation of expressions involving free random variables. It confirms the observation already noted in \cite{szpojankowski2019conditional} in context of regression versions of the Lukacs property.

The paper is organized as follows: In Sections \ref{background} and \ref{MYproperty} we introduce basic facts from free probability theory and recall the Matsumoto-Yor property in more details. In Section \ref{technicalresults} we derive  some formulas relating subordination functions and Boolean cumulants as well as we relate regression conditions of the form \eqref{jedenjeden} to some equations connecting subordination functions and the Cauchy-Stieltjes transform of $\bX+\bY$. In Section \ref{maintheorems} we state   and prove characterization theorems which are the main results of the paper.

\section{Background and notation}\label{background}
In this section we introduce basic notions and facts from non-commutative probability theory that are needed to understand this paper. We assume we are given a $\cgw$-probability space $\left(\calA,\varphi\right)$ i.e. $\calA$ is a unital $\cgw$-algebra and $\varphi:\calA\to\mathbb{C}$ is positive, tracial and faithful functional (state) such that $\varphi(\bI)=1$ where $\bI$ is  the unit of $\calA$.

Elements of $\calA$ are called (non-commutative) random variables and in this paper are denoted as $\bX, \bY,\bZ$ etc.
\subsection{Freeness and cummulants}
Freeness is one of basic concepts that serves as the analogue  of independence from classic probability theory and was introduced by Voiculescu in \cite{voiculescu1986addition}
\begin{definition}
	We say that unital  subalgebras $\calA_1,...,\calA_n$ of $\calA$ are free if for every choice of centered random variables $\bX_k\in\calA_{i_k}$ (centered, i.e.  $\varphi(\bX_k)=0$), $k=1,2,\ldots,n$, such that $i_1\neq i_2\neq\ldots\neq i_n$ we have 
	$$\varphi\left(\bX_1\cdot\ldots\cdot\bX_n\right)=0.$$
	
	We say that random variables $\bX, \bY\in\calA $ are free if unital subalgebras generated by those elements are free. 
\end{definition}
The definition of freeness can be viewed as a rule for computing joint moments. For example if $\bX, \bY$ are free, then $\varphi(\bX \bY)=\varphi(\bX)\varphi(\bY)$.

For positive integer $n$ let us denote $[n]=\left\{1,2,\ldots,n\right\}$.  
\begin{definition}
	\hfill
\begin{enumerate}
\item A partition $\pi$ of $[n]$ is a set $\pi=\left\{B_1,...,B_k\right\}$ of non-empty and pairwise disjoint subsets of $[n]$ such that $[n]=\bigcup_{i=1}^kB_i$. Elements $B_1,\ldots,B_k$ are called blocks of $\pi$. The set of all partition of $[n]$ is denoted by $\mathcal{P}(n)$.
	\item A partition $\pi\in \mathcal{P}(n)$ is called  an interval partition  if every block $B$ of $\pi$ is of the form $[n]\cap I$ for some interval $I$. The set of all $2^{n-1}$ interval partitions of $[n]$ is denoted by $Int(n)$.
	\item  A partition $\pi\in \mathcal{P}(n)$ is called a non-crossing partition  if for every two blocks $B_1,B_2\in \pi$ and every $i_1,i_2\in B_1$ and $j_1, j_2\in B_2$ such that $i_1<j_1<i_2<j_2$ we have $B_1=B_2$. The set of all non-crossing  partitions of $[n]$ is denoted by $NC(n)$.
\end{enumerate}
\end{definition}
\begin{remark}
	Both sets $Int(n)$ and $NC(n)$ have a lattice structure induced by so-called reversed refinement order. We say that $\pi_1\leq \pi_2$ if every block of partition $\pi_1$ is contained in some block of $\pi_2$.
\end{remark}

\begin{definition}\label{defboolcumu}
	For $n\geq 1$  the Boolean cumulant functional $\beta_n:\mathcal{A}^n\to\mathbb{C}$ and the free cumulant functional $\kappa_n:\mathcal{A}^n\to\mathbb{C}$ are defined recursively by
	$$\forall \bX_1,\ldots,\bX_n\in\mathcal{A}:\varphi\left(\bX_1\cdot\ldots\cdot\bX_n\right)=\sum_{\pi\in Int(n)}\beta_\pi\left(\bX_1,\ldots,\bX_n\right),$$
		$$\forall \bX_1,\ldots,\bX_n\in\mathcal{A}:\varphi\left(\bX_1\cdot\ldots\cdot\bX_n\right)=\sum_{\pi\in NC(n)}\kappa_\pi\left(\bX_1,\ldots,\bX_n\right),$$
	where for $\pi=\left\{B_1,\ldots,B_k\right\}$ 
	$$\beta_\pi\left(\bX_1,\ldots,\bX_n\right)=\prod_{j=1}^{k}\beta_{|B_j|}\left(\bX_i:i\in B_j\right),$$ and
	$$\kappa_\pi\left(\bX_1,\ldots,\bX_n\right)=\prod_{j=1}^{k}\kappa_{|B_j|}\left(\bX_i:i\in B_j\right).$$
\end{definition}
\begin{remark}\label{remarkboolean}
Boolean cumulants can also be defined directly via M\"obius inversion formula as
\begin{equation}\label{booleandef2}
\beta_n\left(\bX_1,\ldots,\bX_n\right)=\sum_{\pi\in Int(n)}(-1)^{|\pi|+1}\varphi_\pi\left(\bX_1,\ldots,\bX_n\right),
\end{equation}
where $|\pi|$ is the number of blocks of $\pi$ and $\varphi_\pi$ is defined in a similar manner to $\beta_\pi$ i.e.
$$\varphi_\pi\left(\bX_1,\ldots,\bX_n\right)=\prod_{j=1}^{k}\varphi\left(\prod_{i\in B_j}\bX_i\right),$$
where $\prod_{i\in B_j}\bX_i=\bX_{k_1}\bX_{k_2}\cdot\ldots\cdot \bX_{k_m}$ if $B_j=\{k_1<k_2<\ldots<k_m\}$. In particular $\beta_1=\varphi$ and $\beta_2(\bX,\bY)=\varphi(\bX\bY)-\varphi(\bX)\varphi(\bY)$.
\end{remark}
We will need two formulas involving Boolean cumulants. They can be found in \cite{fevrier2020using} and \cite{lehner2019boolean} and were used also in \cite{szpojankowski2019conditional}.
\begin{proposition}\label{propbool}
Assume we are given two collections of random variables $\{\bX_1,\bX_2,\ldots,\bX_{n+1}\}$ and $\{\bY_1,\bY_2,\ldots,\bY_{n}\}$ that are free, $n\geq 1$. Then
\begin{multline}\label{boolmain1}
\varphi\left(\bX_1\bY_1\ldots\bX_n\bY_n\right)=\\
=\sum_{k=0}^{n-1}\sum_{0=j_0<j_1<\ldots<j_{k+1}=n}\varphi\left(\bY_{j_1}\ldots\bY_{j_{k+1}}\right)\prod_{l=0}^k\beta_{2(j_{l+1}-j_l)-1}\left(\bX_{j_l+1},\bY_{j_l+1},\ldots, \bY_{j_{l+1}-1},\bX_{j_{l+1}}\right)
\end{multline}
and
\begin{multline*}
\beta_{2n+1}\left(\bX_1,\bY_1,\ldots,\bX_n,\bY_n,\bX_{n+1}\right)=\\
=\sum_{k=2}^{n+1}\sum_{1=j_1<\ldots<j_{k}=n}\beta_k\left(\bX_{j_1},\ldots,\bX_{j_{k}}\right)\prod_{l=1}^{k-1}\beta_{2(j_{l+1}-j_l)-1}\left(\bY_{j_{l}},\bX_{j_l+1},\bY_{j_l+1},\ldots,\bX_{j_{l+1}-1}, \bY_{j_{l+1}-1}\right).
\end{multline*}
\end{proposition}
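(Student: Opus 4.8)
The plan is to deduce both identities from two standard facts about the free unital subalgebras $\calA_\bX,\calA_\bY\subseteq\calA$ generated by the $\bX_i$'s and by the $\bY_i$'s. The first is the vanishing of mixed free cumulants: for $\bZ_1,\ldots,\bZ_m\in\calA$ one has $\kappa_m(\bZ_1,\ldots,\bZ_m)=0$ whenever these arguments are not all in $\calA_\bX$ nor all in $\calA_\bY$. The second is the expansion of a Boolean cumulant as a sum of free cumulants over the \emph{irreducible} non-crossing partitions -- those $\pi$ in which the block containing $1$ also contains $m$ (write $i\sim_\pi j$ when $i,j$ lie in a common block of $\pi$):
\begin{equation*}
\beta_m(\bZ_1,\ldots,\bZ_m)=\sum_{\substack{\pi\in NC(m)\\ 1\,\sim_\pi\,m}}\kappa_\pi(\bZ_1,\ldots,\bZ_m).
\end{equation*}
This can be quoted from \cite{lehner2019boolean}, or proved by induction on $m$: compare the $Int(m)$-expansion of $\varphi(\bZ_1\cdots\bZ_m)$ grouped by its leftmost block, which reads $\varphi(\bZ_1\cdots\bZ_m)=\sum_{r=1}^{m}\beta_r(\bZ_1,\ldots,\bZ_r)\varphi(\bZ_{r+1}\cdots\bZ_m)$, with the $NC(m)$-expansion grouped by the least $r$ for which $[1,r]$ is a union of blocks of $\pi$, and match first factors. (All of this, and indeed the whole Proposition, is in \cite{fevrier2020using,lehner2019boolean}; what follows only indicates the mechanism.)

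For \eqref{boolmain1}, expand $\varphi(\bX_1\bY_1\cdots\bX_n\bY_n)=\sum_{\pi\in NC(2n)}\kappa_\pi$, with $\bX_i$ placed in position $2i-1$ and $\bY_i$ in position $2i$. By the vanishing of mixed free cumulants only the $\pi$ all of whose blocks are monochromatic (entirely $\bX$-positions, or entirely $\bY$-positions) contribute. To such a $\pi$ I attach its \emph{cut indices} $1\le j_1<\cdots<j_{k+1}=n$: these are the $m\in[n]$ such that $\bY_m$ is \emph{not} enclosed by any $\bX$-block of $\pi$ (no monochromatic $\bX$-block has an element below $2m$ and an element above $2m$; note $n$ is always a cut). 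The combinatorial heart of the argument is that $\pi$ is recovered uniquely from these cuts together with (i) the restriction of $\pi$ to each alternating block $\bX_{j_l+1},\bY_{j_l+1},\ldots,\bY_{j_{l+1}-1},\bX_{j_{l+1}}$, $l=0,\ldots,k$ (with $j_0:=0$), which is automatically \emph{irreducible} -- $\bX_{j_l+1}$ and $\bX_{j_{l+1}}$ share a block, as one checks by following the enclosing $\bX$-blocks of the interior $\bY$'s and using that $j_l,j_{l+1}$ are cuts -- and (ii) the restriction of $\pi$ to the cut positions $2j_1,\ldots,2j_{k+1}$, which is an arbitrary (automatically monochromatic) non-crossing partition; and that, conversely, any such datum glues back to a monochromatic non-crossing partition of $[2n]$ -- non-crossingness being exactly what forbids a block straddling two alternating blocks, or an alternating block and a non-adjacent cut. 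Re-summing $\kappa_\pi$ along this decomposition, applying the irreducible-partition identity on each alternating block and the moment--free-cumulant formula on the cut positions, the alternating blocks contribute $\prod_{l=0}^k\beta_{2(j_{l+1}-j_l)-1}(\bX_{j_l+1},\bY_{j_l+1},\ldots,\bX_{j_{l+1}})$ and the cut positions contribute $\varphi(\bY_{j_1}\cdots\bY_{j_{k+1}})$, which is \eqref{boolmain1}.

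The second identity runs through the same mechanism with $\varphi$ replaced by $\beta_{2n+1}$. By the irreducible-partition identity, $\beta_{2n+1}(\bX_1,\bY_1,\ldots,\bX_n,\bY_n,\bX_{n+1})=\sum_{\pi}\kappa_\pi$ over monochromatic non-crossing $\pi$ with $1\sim_\pi 2n+1$. Here the cuts are the $m$ for which $\bX_m$ is not enclosed by any $\bY$-block; the positions $1$ and $2n+1$ are automatic cuts, the interior alternating blocks $\bY_{j_l},\bX_{j_l+1},\ldots,\bY_{j_{l+1}-1}$ are again forced irreducible, and the remaining constraint $1\sim_\pi 2n+1$ is exactly the requirement that the restriction of $\pi$ to the cut $\bX$-positions be irreducible. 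Re-summing as before, each alternating block now contributes $\beta_{2(j_{l+1}-j_l)-1}(\bY_{j_l},\bX_{j_l+1},\ldots,\bY_{j_{l+1}-1})$ and the cut $\bX$-positions contribute $\beta_k(\bX_{j_1},\ldots,\bX_{j_k})$, which is the second formula. (It may alternatively be obtained from \eqref{boolmain1} by M\"obius inversion over interval partitions.)

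The step I expect to be the genuine obstacle is the combinatorial claim of the second paragraph: that the cut indices, the restrictions to the alternating blocks, and the restriction to the cut positions together recover every monochromatic non-crossing partition of $[2n]$ exactly once, with the alternating restrictions forced irreducible and no block crossing an interface between the pieces. That is where the whole use of non-crossingness is concentrated; the remaining steps are routine manipulations of the moment--cumulant identities.
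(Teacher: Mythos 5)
The paper does not prove this proposition at all: it is imported verbatim from \cite{fevrier2020using} and \cite{lehner2019boolean}, so there is no internal proof to compare yours against. Your sketch reproduces the mechanism of the proofs in those references correctly: vanishing of mixed free cumulants reduces the moment (respectively the Boolean cumulant, via the expansion $\beta_m=\sum_{\pi\in NC(m),\,1\sim_\pi m}\kappa_\pi$, whose inductive proof via the first-block recursion you indicate does go through) to a sum over monochromatic non-crossing partitions, and the cut-index decomposition regroups that sum into the stated products. I checked the decomposition on small cases and the forced irreducibility of the alternating segments (the $\bX$-block covering $\bY_{j_l+1}$ must contain $\bX_{j_l+1}$, since its minimum cannot drop past $2j_l+1$ without violating that $j_l$ is a cut, and the same argument pushes its maximum up to $2j_{l+1}-1$); both come out right. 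The one substantive gap is the one you flag yourself: the bijectivity of the passage from a monochromatic $\pi$ to the triple (cut set, irreducible restrictions to the alternating segments, arbitrary non-crossing restriction to the cut positions) is asserted with the correct supporting reasons but not carried out, so I would grade this as a correct proof outline rather than a complete proof; the missing verification is routine non-crossing bookkeeping of the kind just described and is done in full in \cite{fevrier2020using}. One caution if you write it out: in the second displayed formula the printed bound $j_k=n$ is inconsistent with the $n+1$ variables $\bX_1,\ldots,\bX_{n+1}$ (counting arguments forces $j_k=n+1$); your reading, with the cuts ranging over $\bX_1,\ldots,\bX_{n+1}$ and the endpoints forced to be cuts, is the correct one.
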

\begin{remark}
	Formula \eqref{boolmain2} will be used several times in this paper and it will be convenient for the reader if we write it down in the special case when $\{\bX_1,\bX_2,\ldots,\bX_{n+1}\}=\{\bZ_1,\underbrace{\bX,\ldots,\bX}_{n-1},\bZ_{2}\}$ and $\{\bY_1,\bY_2,\ldots,\bY_{n}\}=\{\underbrace{\bY,\bY,\ldots,\bY}_{n}\}$. In this case we have
	\begin{multline*}
	\beta_{2n+1}\left(\bZ_1,\bY,\bX,\ldots,\bX,\bY,\bZ_{2}\right)=\\
	=\sum_{k=2}^{n+1}\sum_{1=j_1<\ldots<j_{k}=n}\beta_k\left(\bZ_1,\underbrace{\bX,\ldots,\bX}_{k-2},\bZ_2\right)\prod_{l=1}^{k-1}\beta_{2(j_{l+1}-j_l)-1}\left(\bY,\bX,\bY,\ldots,\bX,\bY\right).
	\end{multline*}
After simple change of indices this can be written in much simpler form
	\begin{multline}
		\beta_{2n+1}\left(\bZ_1,\bY,\bX,\ldots,\bX,\bY,\bZ_{2}\right)\\
		=\sum_{k=1}^{n}\beta_k\left(\bZ_1,\underbrace{\bX,\ldots,\bX}_{k-1},\bZ_2\right)\sum_{i_1+\ldots+i_{k}=n-k}\prod_{l=1}^{k}\beta_{2i_l+1}\left(\bY,\bX,\bY,\ldots,\bX,\bY\right).\label{boolmain2}
	\end{multline}
\end{remark}
We also recall two  simple facts. 
\begin{proposition}[\cite{fevrier2020using}] \label{cor45}
	Let  $n\geq 2$.  If either $\bX_1=\bI$ or $\bX_n=\bI$, then $\beta_n\left(\bX_1,\ldots,\bX_n\right)=0$.
\end{proposition}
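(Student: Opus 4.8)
The plan is to compute $\beta_n(\bX_1,\ldots,\bX_n)$ straight from the M\"obius formula \eqref{booleandef2} and to cancel the terms in pairs by means of a sign-reversing involution on $Int(n)$. I would first treat the case $\bX_1=\bI$. Given $\pi\in Int(n)$, let $\{1,2,\ldots,j\}$ be the block of $\pi$ containing $1$ (it has this form because $\pi$ is an interval partition). Define $\tau(\pi)\in Int(n)$ as follows: if $j\geq 2$, split off the singleton, replacing the block $\{1,\ldots,j\}$ by the two consecutive blocks $\{1\}$ and $\{2,\ldots,j\}$; if $j=1$, merge the block $\{1\}$ with the block immediately to its right, which exists since $n\geq 2$. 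Then $\tau$ is a well-defined involution on $Int(n)$ without fixed points, and $|\tau(\pi)|=|\pi|\pm 1$, so that $(-1)^{|\pi|+1}=-(-1)^{|\tau(\pi)|+1}$.

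The crucial point is that $\varphi_\pi(\bX_1,\ldots,\bX_n)=\varphi_{\tau(\pi)}(\bX_1,\ldots,\bX_n)$: the two products differ only in the factor(s) coming from the block(s) containing the index $1$, and since $\bX_1=\bI$ and $\varphi(\bI)=1$ we have
$$\varphi\!\left(\bI\cdot\bX_2\cdots\bX_j\right)=\varphi\!\left(\bX_2\cdots\bX_j\right)=\varphi(\bI)\,\varphi\!\left(\bX_2\cdots\bX_j\right),$$
i.e.\ the contribution of a block $\{1,\ldots,j\}$ equals the joint contribution of $\{1\}$ and $\{2,\ldots,j\}$. Hence the summands attached to $\pi$ and $\tau(\pi)$ cancel, and splitting $Int(n)$ into the (two-element) orbits of $\tau$ gives $\beta_n(\bI,\bX_2,\ldots,\bX_n)=0$. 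The case $\bX_n=\bI$ is handled identically, using the block containing $n$ and the block immediately to its left in place of the block containing $1$.

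Alternatively, one can run a short strong induction on $n\geq 2$: write $\varphi(\bX_2\cdots\bX_n)=\varphi(\bI\cdot\bX_2\cdots\bX_n)=\sum_{\pi\in Int(n)}\beta_\pi(\bI,\bX_2,\ldots,\bX_n)$ and split the sum according to the block of $1$; the full block $[n]$ contributes $\beta_n(\bI,\bX_2,\ldots,\bX_n)$, the singleton $\{1\}$ together with the interval partitions of $\{2,\ldots,n\}$ contributes $\varphi(\bI)\varphi(\bX_2\cdots\bX_n)=\varphi(\bX_2\cdots\bX_n)$, and every remaining term carries a factor $\beta_j(\bI,\bX_2,\ldots,\bX_j)$ with $2\leq j<n$, which vanishes by the inductive hypothesis; comparing both sides yields $\beta_n(\bI,\bX_2,\ldots,\bX_n)=0$. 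I do not expect a real obstacle here; the only points that deserve a little care are checking that $\tau$ is well defined and fixed-point free, and remembering that Boolean cumulants need not be symmetric under reversal of their arguments, so the case $\bX_n=\bI$ must be argued explicitly rather than dismissed as symmetric.
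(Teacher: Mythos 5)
Your argument is correct. Note that the paper does not prove this proposition at all --- it is quoted from the literature (F\'evrier et al.) with a citation and no argument --- so you are supplying a proof the paper omits rather than diverging from one. Both of your routes work: the sign-reversing involution $\tau$ on $Int(n)$ (split the block of $1$ at the singleton $\{1\}$ when it has size $\geq 2$, merge $\{1\}$ into the block on its right otherwise) is fixed-point free, changes $|\pi|$ by exactly one, and preserves $\varphi_\pi$ because $\varphi(\bI\,\bX_2\cdots\bX_j)=\varphi(\bI)\varphi(\bX_2\cdots\bX_j)$, so the M\"obius sum \eqref{booleandef2} cancels in pairs; and the inductive argument via the defining recursion of Definition \ref{defboolcumu} closes correctly, with the base case $n=2$ giving $\varphi(\bX_2)=\beta_2(\bI,\bX_2)+\varphi(\bX_2)$. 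Your caution about reversal symmetry is reasonable as a matter of principle, though in this paper's setting ($\varphi$ tracial, arguments built from self-adjoint elements) the reflection invariance $\beta_n(\bX_1,\ldots,\bX_n)=\beta_n(\bX_n,\ldots,\bX_1)$ is in fact available and is used elsewhere (in the proof of Lemma \ref{techlemma2}); in any case your direct treatment of the $\bX_n=\bI$ case makes the proof independent of that.
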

\begin{proposition}\label{prop212}
	For $n\geq 1$
	 $$\beta_{n}\left(\bX_1\cdot\bX_2,\bX_3,\ldots,\bX_{n+1}\right)=\beta_{n+1}\left(\bX_1,\bX_2,\bX_3,\ldots,\bX_{n+1}\right)+\beta_1\left(\bX_1\right)\beta_{n}\left(\bX_2,\bX_3,\ldots,\bX_{n+1}\right).$$
\end{proposition}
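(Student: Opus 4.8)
The plan is to prove the identity directly from the M\"obius-inversion description of Boolean cumulants recorded in \eqref{booleandef2}, i.e. from
\[
\beta_m(\bZ_1,\ldots,\bZ_m)=\sum_{\pi\in Int(m)}(-1)^{|\pi|+1}\varphi_\pi(\bZ_1,\ldots,\bZ_m).
\]
First I would expand $\beta_{n+1}(\bX_1,\ldots,\bX_{n+1})$ via this formula and split the index set $Int(n+1)$ according to whether $1$ and $2$ lie in the same block of $\pi$ or not; since $\pi$ is an interval partition, in the second case the block containing $1$ is forced to be the singleton $\{1\}$. Call these two families $Int_{=}$ and $Int_{\neq}$.

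Next, for $\pi\in Int_{=}$, "contracting" the pair of positions $\{1,2\}$ into a single position gives a bijection onto $Int(n)$ that preserves the number of blocks, and under this bijection $\varphi_\pi(\bX_1,\ldots,\bX_{n+1})=\varphi_{\pi'}(\bX_1\bX_2,\bX_3,\ldots,\bX_{n+1})$, because the unique block of $\pi$ carrying both $1$ and $2$ contributes the factor $\varphi(\bX_1\bX_2\cdots)$ and the remaining blocks match trivially. Hence the partial sum over $Int_{=}$ equals exactly $\beta_n(\bX_1\bX_2,\bX_3,\ldots,\bX_{n+1})$. For $\pi\in Int_{\neq}$, removing the block $\{1\}$ identifies $\pi$ with an interval partition $\pi''$ of $\{2,\ldots,n+1\}\cong[n]$; then $\varphi_\pi(\bX_1,\ldots,\bX_{n+1})=\varphi(\bX_1)\,\varphi_{\pi''}(\bX_2,\ldots,\bX_{n+1})$ and $|\pi|=|\pi''|+1$, so the sign flips and, using $\beta_1=\varphi$, the partial sum over $Int_{\neq}$ equals $-\beta_1(\bX_1)\,\beta_n(\bX_2,\ldots,\bX_{n+1})$. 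Adding the two contributions gives $\beta_{n+1}(\bX_1,\ldots,\bX_{n+1})=\beta_n(\bX_1\bX_2,\bX_3,\ldots,\bX_{n+1})-\beta_1(\bX_1)\beta_n(\bX_2,\ldots,\bX_{n+1})$, which is the asserted identity after rearranging.

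There is no serious obstacle here; the only points that require care are verifying that the maps $\pi\mapsto\pi'$ and $\pi\mapsto\pi''$ are genuine bijections of the relevant subsets of $Int(n+1)$ onto $Int(n)$ — in particular that the interval structure is preserved in both directions — and tracking the parity of $|\pi|$ in the family $Int_{\neq}$, which is precisely what produces the $\beta_1(\bX_1)$ term with the correct sign. The degenerate case $n=1$ (where $Int(2)$ consists only of $\{\{1,2\}\}$ and $\{\{1\},\{2\}\}$) can be checked directly and is consistent with the general argument; note that traciality and positivity of $\varphi$ play no role, so the statement is purely combinatorial. As an alternative one could argue inductively from the defining relation $\varphi(\bZ_1\cdots\bZ_m)=\sum_{\pi\in Int(m)}\beta_\pi(\bZ_1,\ldots,\bZ_m)$ by peeling off the block containing the last index, but the M\"obius-inversion route above is shorter and avoids an induction.
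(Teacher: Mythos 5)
Your argument is correct, and it is worth noting that the paper itself gives no proof of this proposition: it simply records it as a special case of Proposition 2.12 of the cited reference of F\'evrier, Mastnak, Nica and Szpojankowski, where a more general product formula for Boolean cumulants (with an arbitrary argument replaced by a product) is established. Your route is therefore genuinely different in that it is self-contained and more elementary: you work directly from the M\"obius-inversion formula \eqref{booleandef2}, split $Int(n+1)$ according to whether $1$ and $2$ share a block, and exploit the two key structural facts — that contracting $\{1,2\}$ when they share a block is a block-count-preserving bijection onto $Int(n)$ matching $\varphi_\pi(\bX_1,\ldots,\bX_{n+1})$ with $\varphi_{\pi'}(\bX_1\bX_2,\bX_3,\ldots,\bX_{n+1})$, and that when they do not share a block the interval condition forces $\{1\}$ to be a singleton, producing the sign flip and hence the $-\beta_1(\bX_1)\beta_n(\bX_2,\ldots,\bX_{n+1})$ term. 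Both bijections and the parity bookkeeping check out, and the rearrangement yields exactly the stated identity. What the reference's approach buys is generality (a formula for inserting a product at any position, with both boundary terms); what yours buys is a short, verifiable proof of precisely the special case needed here without importing external machinery.
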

The last proposition is a special case of Proposition 2.12 from \cite{fevrier2020using}.
\subsection{Conditional expectation}
Assume that $(\calA,\varphi)$  is a $W^{*}$ probability space, i.e., $\calA$  is a finite von Neumann algebra and
$\varphi$ a faithful, normal, tracial state. If  $\mathcal{B}\subset\calA$ is von Neumann subalgebra, we denote by $\varphi\left(\cdot\mid\mathcal{B}\right)$ the conditional expectation with respect to $\mathcal{B}$. That is $\varphi\left(\cdot\mid\mathcal{B}\right):\calA\to\mathcal{B}$ is  faithful, normal projection such that $\varphi\circ\left[ \varphi\left(\cdot\mid\mathcal{B}\right)\right]=\varphi$. The map $ \varphi\left(\cdot\mid\mathcal{B}\right)$ is a $\mathcal{B}$-module map i.e. $$\varphi\left(\bY_1\bX\bY_2\mid\mathcal{B}\right)=\bY_1\varphi\left(\bX\mid\mathcal{B}\right)\bY_2$$ for all $\bX\in\calA$ and $\bY_1,\bY_2\in\mathcal{B}$.

\subsection{Distribution of a random variable and analytic tools.}
\begin{definition}
The distribution of   self-adjoint random variable $\bX\in\calA$ is a uniquely determined,  compactly supported, probability measure $\mu_\bX$ on the real line such that for all $n\geq 1$
 $$\varphi\left(\bX^n\right)=\int_{\mathbb{R}}x^n\mu_{\bX}(dx).$$
\end{definition}
We list now some analytic tools and their properties that we use in this paper.
\begin{enumerate}
\item The Cauchy-Stieltjes transform of  a compactly supported measure  $\mu$ on the real line is  the map 
$$G_\mu(z)=\int_{\mathbb{R}}\frac{\mu(dx)}{z-x},$$
defined for  $z\in\mathbb{C}\setminus\textrm{supp}(\mu)$.
It is known that the Cauchy-Stieltjes transform is an analytic map $G_\mu:\mathbb{C}^+\to \mathbb{C}^-$.

 If $\bX$ is a self-adjoint random variable we write $G_{\bX}$ for the $G_{\mu_\bX}$. Note that
	$$\gx=\varphi\left((z\bI-\bX)^{-1}\right)=\int_{\mathbb{R}}\frac{\mu_\bX(dx)}{z-x}.$$
	
\item The $r$-transform of $\bX$ is the function $$r_\bX(z)=G_\bX^{-1}(z)-\frac{1}{z},$$ where $G_\bX^{-1}$ is the inverse function of $G_{\bX}$, defined in some neighborhood of $0$. It is known that $r_{\bX}$ is an analytic map and for sufficiently small $z$  the following expansion  holds
	$$r_\bX(z)=\sum_{k=0}^\infty\kappa_{k+1}(\bX,...,\bX)z^k.$$
\item The moment transform of $\bX$ (which is not necessarily self-adjoint) is  defined  for all $z\in\mathbb{C}$ such that $\bI-z\bX$ is invertible  as  $$M_\bX(z)=\varphi\left(z\bX(\bI-z\bX)^{-1}\right).$$
$M_\bX$ is an an analytic function in some  neighborhood of $0$ and one has 
$$M_\bX(z)=	\sum_{k=1}^\infty\varphi(\bX^k)z^k.$$
\item The $\eta$-transform of $\bX$ is defined by $$\eta_\bX(z)=\frac{M_\bX(z)}{M_\bX(z)+1}.$$
 In some neighborhood of $0$ one has 
 $$\eta_\bX(z)=\sum_{k=1}^\infty\beta_{k}(\bX,...,\bX)z^k.$$
 
\end{enumerate}
Each of these transformations uniquely determine moments of a self-adjoint random variable $\bX$ and thus also uniquely determine its distribution.

\subsection{Subordination}
Let $\bX$ and $\bY$ be  free self-adjoint random variables. There is a fundamental relation between  $r$-transforms  of $\bX,\bY$ and $\bX+\bY$, namely
\begin{equation}
r_{\bX+\bY}(z)=r_{\bX}(z)+r_{\bY}(z).\label{rxpy}
\end{equation}
Consequently distributions of $\bY$ and $\bX+\bY$ determine the distribution of $\bX$.

The relation between  Cauchy-Stieltjes transforms of $\bX,\bY$ and $\bX+\bY$ is more complicated and was established by Biane in \cite{biane1998processes}. It involves two  functions $\omega_1,\omega_2$ that can be defined as unique analytic functions $\omega_1,\omega_2:\mathbb{C}^+\to \mathbb{C}^+$ satisfying the following properties: $\textrm{Im}(\omega_k(z))\geq \textrm{Im}(z)$, $\omega_k(iy)/iy\longrightarrow 1$ when $y\to+\infty$, $k=1,2$, and
\begin{equation}\gxy=G_{\bX}(\omega_1(z))=G_{\bY}(\omega_2(z))\label{subordination}
\end{equation} 
Because of the last property $\omega_1,\omega_2$  are called the subordination functions. 

As a consequence of \eqref{rxpy} and \eqref{subordination} the following equality holds for all $z\in\mathbb{C}^+$
\begin{equation}
z=\omega_1(z)+\omega_2(z)-\frac{1}{\gxy}\label{omega12g}
\end{equation}

We also need the following theorems. The first one generalizes  formula \eqref{subordination} in the framework of von Neumann algebras. The second gives interesting series expansion of the subordination function $\oj$ that involves Boolean cumulants.
\begin{proposition}[\cite{biane1998processes}]
	If $\bX$ and  $\bY$ are free self-adjoint random variables, then for all $z\in\mathbb{C}^+$
	\begin{equation}
	\varphi\left((z\bI-\bX-\bY)^{-1}\mid\bX\right)=\left(\oj-\bX\right)^{-1}. \label{resolventeqation}
	\end{equation}
	\end{proposition}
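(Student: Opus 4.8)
The plan is to identify the conditional expectation $P:=\varphi(\cdot\mid\bX)$ of the resolvent with a resolvent of $\bX$ by first computing it as an operator-valued Cauchy transform of $\bY$ and then recognising the scalar that appears. Write $\mathcal{B}:=W^{*}(\bX)$. Since $z\bI-\bX-\bY=(z\bI-\bX)-\bY$ with $z\bI-\bX\in\mathcal{B}$ and $\bY$ free from $\mathcal{B}$, the element $P\left((z\bI-\bX-\bY)^{-1}\right)$ is the $\mathcal{B}$-valued Cauchy transform of $\bY$ evaluated at the point $z\bI-\bX$. For $z$ in a neighbourhood of infinity one expands
\begin{equation*}
(z\bI-\bX-\bY)^{-1}=\sum_{m\geq 0}(z\bI-\bX)^{-1}\bigl(\bY(z\bI-\bX)^{-1}\bigr)^{m},
\end{equation*}
applies $P$ term by term, and is left with the task of evaluating $P\bigl(b_{0}\bY b_{1}\bY\cdots\bY b_{m}\bigr)$ for $b_{i}\in\mathcal{B}$.

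The key step is the combinatorial evaluation of these alternating words, available precisely because $\bY$ is free from $\mathcal{B}$: $P\bigl(b_{0}\bY b_{1}\cdots\bY b_{m}\bigr)$ is a sum over the non-crossing partitions of the $m$ occurrences of $\bY$, each term weighted by free cumulants $\kappa_{\bullet}(\bY,\dots,\bY)$, in which every $\mathcal{B}$-element nested strictly below some block is collapsed to its scalar value under $\varphi$ while the remaining $\mathcal{B}$-elements stay as operators and get multiplied in order. Equivalently, the $\mathcal{B}$-valued free cumulants of $\bY$ are scalar-like, $\kappa^{\mathcal{B}}_{n}\bigl(\bY b_{1},\dots,\bY b_{n}\bigr)=\kappa_{n}(\bY)\,\varphi(b_{1})\cdots\varphi(b_{n-1})\,b_{n}$, so that the $\mathcal{B}$-valued $R$-transform of $\bY$ is the scalar map $w\mapsto r_{\bY}\bigl(\varphi(w)\bigr)\bI$. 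This vanishing of mixed cumulants is the technical core; it is classical in operator-valued free probability, and it is also exactly the type of statement packaged by the Boolean-cumulant identities of Proposition~\ref{propbool}, which is why Boolean cumulants are the natural bookkeeping device. Feeding this structure into the series above and resumming — the computation that underlies free additive subordination — yields, for $|z|$ large,
\begin{equation*}
P\bigl((z\bI-\bX-\bY)^{-1}\bigr)=\Bigl(z\bI-\bX-r_{\bY}\bigl(\gxy\bigr)\bI\Bigr)^{-1}.
\end{equation*}

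Finally one recognises the scalar $r_{\bY}(\gxy)$: reading \eqref{subordination} as $\gxy=G_{\bY}(\od)$ gives $\od=\tfrac{1}{\gxy}+r_{\bY}(\gxy)$, and substituting into \eqref{omega12g} gives $\oj=z-r_{\bY}(\gxy)$. Thus the identity above reads $P\bigl((z\bI-\bX-\bY)^{-1}\bigr)=(\oj-\bX)^{-1}$ for $|z|$ large, and since both sides are analytic $\mathcal{B}$-valued functions on $\mathbb{C}^{+}$ — the right-hand side because $\textrm{Im}(\oj)>0$ there — the identity propagates to all of $\mathbb{C}^{+}$ by analytic continuation. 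The main obstacle is the middle paragraph: proving the alternating-word formula, equivalently the scalarity of the $\mathcal{B}$-valued $R$-transform of $\bY$, and making the attendant resummation rigorous; the surrounding steps are then routine.
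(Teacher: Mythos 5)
This proposition is quoted in the paper from \cite{biane1998processes} without proof, so there is no in-paper argument to compare against; your proposal has to stand on its own. Its overall strategy is the standard operator-valued one and the surrounding steps are sound: expanding $(z\bI-\bX-\bY)^{-1}$ as a Neumann series for large $|z|$, applying $P=\varphi(\cdot\mid\bX)$ termwise, resumming to $\bigl(z\bI-\bX-r_{\bY}(\gxy)\bI\bigr)^{-1}$, identifying $r_{\bY}(\gxy)=z-\oj$ via $\od=\tfrac{1}{\gxy}+r_{\bY}(\gxy)$ and \eqref{omega12g}, and continuing analytically to $\mathbb{C}^{+}$ (where the right-hand side is indeed well defined and analytic since $\textrm{Im}(\oj)\geq\textrm{Im}(z)>0$ and $\bX$ is self-adjoint, and the left-hand side is analytic because the conditional expectation is a bounded linear map). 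The scalar identification at the end is correct and consistent with \eqref{cofz}.

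The one genuine gap is exactly the one you flag: the claim that scalar freeness of $\bY$ from $\mathcal{B}=W^{*}(\bX)$ forces the $\mathcal{B}$-valued cumulants of $\bY$ to be scalar, $\kappa^{\mathcal{B}}_{n}(\bY b_{1},\dots,\bY b_{n-1},\bY)=\kappa_{n}(\bY,\dots,\bY)\varphi(b_{1})\cdots\varphi(b_{n-1})$, equivalently the non-crossing-partition evaluation of $P(b_{0}\bY b_{1}\cdots\bY b_{m})$. This is where freeness is actually used, and in your write-up it is asserted rather than proved. It is a known theorem (it is essentially the statement that $\bY$ free from $\mathcal{B}$ implies $\bY$ is free from $\mathcal{B}$ with amalgamation over $\mathbb{C}\subset\mathcal{B}$ with scalar-valued cumulants, found in Nica--Shlyakhtenko--Speicher and in Speicher's memoir on freeness with amalgamation), so citing it would close the argument; proving it from scratch, or deriving the alternating-word formula directly from the moment definition of freeness, is nontrivial and is the actual content of Biane's computation. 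A second, smaller point to make rigorous is the resummation: you should justify convergence of the series after applying $P$ (norm bounds as in Remark \ref{rem1} of the paper suffice for $|z|$ large) and check that the fixed-point relation $G^{\mathcal{B}}_{\bY}(b)=\bigl(b-R^{\mathcal{B}}_{\bY}(G^{\mathcal{B}}_{\bY}(b))\bigr)^{-1}$ you are implicitly invoking holds at $b=z\bI-\bX$ in the relevant domain. With those two items supplied, the proof is complete.
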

	
\begin{proposition}[\cite{lehner2019boolean}]
	If $\bX$ and $\bY$ are free self-adjoint random variables, then 
	\begin{equation}
	\omega_1(z)=z-\sum_{n=0}^\infty\beta_{2n+1}\left(\bY,\bRx,\bY,\ldots,\bRx,\bY\right)\label{cofz}
	\end{equation}
	in some neighborhood of infinity in $\mathbb{C}^+$.
\end{proposition}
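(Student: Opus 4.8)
The plan is to deduce \eqref{cofz} from the equations that characterize the subordination functions, with the Boolean-cumulant series entering through a pair of functional equations. Write $R=\bRx$ and introduce, alongside $W(z):=\sum_{n\geq0}\beta_{2n+1}(\bY,R,\bY,\ldots,R,\bY)$ (the $n$-th summand carrying $n+1$ copies of $\bY$ and $n$ copies of $R$), the companion series $A(z):=\sum_{m\geq0}\beta_{2m+1}(R,\bY,R,\ldots,\bY,R)$. Both converge in a neighbourhood of infinity because $\|R\|=O(1/|z|)$ and Boolean cumulants are dominated by products of moments. The goal is to prove $W=z-\oj$; this will come out together with $A=1/\od$, which is precisely the expansion of $1/\od$ announced in the introduction.

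The first step is to establish the two functional equations
$$W\,A=\sum_{k\geq1}\beta_k(\bY,\ldots,\bY)\,A^{k}=\eta_\bY(A),\qquad A\,W=\sum_{k\geq1}\beta_k(R,\ldots,R)\,W^{k}=\eta_R(W).$$
Since $\bY$ and $R$ generate free subalgebras, the second formula of Proposition \ref{propbool} — used once with the two free families taken to be the copies of $\bY$ and the copies of $R$, and once with their roles interchanged — rewrites $\beta_{2n+1}(\bY,R,\ldots,\bY)$ as a sum over $k\geq1$ of $\beta_k(\bY,\ldots,\bY)$ times a sum, over compositions $n=d_1+\dots+d_{k-1}$ into positive parts, of the products $\prod_{l=1}^{k-1}\beta_{2d_l-1}(R,\bY,\ldots,R)$ (and symmetrically for $\beta_{2m+1}(R,\bY,\ldots,R)$). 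Summing over $n$ turns the inner sums into geometric series and produces $WA=\sum_{k\geq1}\beta_k(\bY,\ldots,\bY)A^{k}$, which equals $\eta_\bY(A)$ by the expansion $\eta_\bullet(\zeta)=\sum_{k\geq1}\beta_k(\cdot,\ldots,\cdot)\zeta^{k}$ recalled in Section \ref{background}; the second equation follows in the same way. This collapsing of the nested Boolean-cumulant sums is the step I expect to be most delicate: it requires matching the summation in Proposition \ref{propbool} to the alternating word of $\bY$'s and resolvents, and carefully justifying the rearrangements of the absolutely convergent series.

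The second step evaluates the two $\eta$-transforms. The resolvent identity gives $\zeta R(\bI-\zeta R)^{-1}=\zeta\bigl((z-\zeta)\bI-\bX\bigr)^{-1}$, hence $M_R(\zeta)=\zeta\,G_\bX(z-\zeta)$ and $\eta_R(\zeta)=\zeta G_\bX(z-\zeta)/\bigl(1+\zeta G_\bX(z-\zeta)\bigr)$; a direct computation gives $M_\bY(1/w)=w\,G_\bY(w)-1$, hence $\eta_\bY(1/w)=1-1/\bigl(w\,G_\bY(w)\bigr)$. Plugging these into the functional equations and simplifying, one finds that $u:=z-W$ and $v:=1/A$ satisfy $G_\bX(u)=G_\bY(v)$ and $u+v-z=1/G_\bX(u)$. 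As $z\to\infty$ in $\mathbb{C}^+$ one has $W\to\varphi(\bY)$ and $A\sim\gx\sim 1/z$, so $u$ and $v$ are analytic near infinity with $u(z)/z,v(z)/z\to1$; these are exactly the relations \eqref{subordination} and \eqref{omega12g} together with the normalization characterizing $\omega_1,\omega_2$, so by uniqueness of the subordination functions $u=\oj$ and $v=\od$ in a neighbourhood of infinity. This is \eqref{cofz}, and it simultaneously yields $1/\od=A(z)$.
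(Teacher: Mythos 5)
The paper does not actually prove this proposition -- it is imported from \cite{lehner2019boolean} and then used as an input to Lemma \ref{techlemma1} -- so your argument stands as a self-contained derivation, and it is essentially correct. Moreover it runs on exactly the engine the paper uses to prove the companion Lemma \ref{techlemma1}: formula \eqref{boolmain2} applied to the alternating word, once with the copies of $\bY$ as the outer family and once with the copies of $\bRx$ as the outer family, gives $W=\sum_{k\ge0}\beta_{k+1}(\bY,\ldots,\bY)A^{k}$ and $A=\sum_{k\ge0}\beta_{k+1}(\bRx,\ldots,\bRx)W^{k}$, which are your two functional equations $WA=\eta_\bY(A)$ and $AW=\eta_{\bRx}(W)$; the evaluations $M_{\bRx}(\zeta)=\zeta G_\bX(z-\zeta)$ and $\eta_\bY(1/w)=1-1/\bigl(wG_\bY(w)\bigr)$ are correct, and the system you obtain for $u=z-W$, $v=1/A$ is $1/G_\bX(u)=1/G_\bY(v)=u+v-z$ with $u(z)/z\to1$, $v(z)/z\to1$. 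In effect you prove \eqref{cofz} and Lemma \ref{techlemma1} (that $A=1/\od$) simultaneously, which is a tidier organization than citing one and proving the other.

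The one step that needs more care is the appeal to ``uniqueness of the subordination functions.'' The characterization quoted in Section \ref{background} defines $\omega_1,\omega_2$ through the property $\gxy=G_\bX(\oj)=G_\bY(\od)$, i.e.\ it already refers to $G_{\bX+\bY}$, whereas your derived system only asserts $G_\bX(u)=G_\bY(v)$ and identifies the common value with $1/(u+v-z)$, not with $\gxy$. So the uniqueness statement as given in the paper does not literally apply. You can close the gap either by invoking the Belinschi--Bercovici fixed-point characterization (the pair of self-maps with $\omega_j(z)/z\to1$ satisfying $1/G_\bX(\omega_1)=1/G_\bY(\omega_2)=\omega_1+\omega_2-z$ is unique and the common value is automatically $1/\gxy$), or in one line: set $g=G_\bX(u)=G_\bY(v)$; near infinity $u=r_\bX(g)+1/g$ and $v=r_\bY(g)+1/g$, so $z=u+v-1/g=r_{\bX+\bY}(g)+1/g$ by \eqref{rxpy}, whence $g=\gxy$ and then $u=\oj$ because $G_\bX$ is injective near infinity and both $u$ and $\oj$ tend to infinity there. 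You should also dispose separately of the degenerate case where $W\equiv0$ (e.g.\ $\bY$ a centered scalar), where the cancellation of $W$ in $AW=\eta_{\bRx}(W)$ is vacuous but the claim is trivial by Proposition \ref{cor45} -- the same kind of edge case the paper handles at the start of the proof of Lemma \ref{techlemma1}. With those two small additions the proof is complete.
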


\section{Free Matsumoto-Yor property}\label{MYproperty}
In this section we recall necessary definitions to state the Matsumoto-Yor property in free probability.
\subsection{Free Poisson Distribution} We say that the measure $\nu=\nu(\lambda,\gamma)$ with $\lambda\geq 0, \gamma>0$  is free Poisson or Marchenko-Pastur distribution if
$$\nu=\max\{0,1-\lambda\}\delta_0+\lambda\nu_1,$$
where $\nu_1$ is a probability measure with density
$$\frac{1}{2\pi\gamma x}\sqrt{4\lambda\gamma^2-(x-\gamma(1+\lambda))^2}\ \mathbbm{1}_{\left(\gamma(1-\sqrt{\lambda})^2,\gamma(1+\sqrt{\lambda})^2\right)}(x).$$

The $r$-transform of the free Poisson distribution $\nu(\lambda,\gamma)$ is equal $$r_{\nu(\lambda,\gamma)}(z)=\frac{\lambda\gamma}{1-\gamma z}.$$
\subsection{Free-GIG distribution}
The free Generalized Inverse Gaussian distribution is a probability  measure $\mu=\mu(\lambda, \alpha, \beta)$, with $\alpha,\beta>0, \lambda\in\mathbb{R}$, which is compactly supported on the interval $[a,b]$ and has the density 
$$\frac{d\mu}{dx}=\frac{1}{2\pi}\sqrt{(x-a)(x-b)}\left(\frac{\alpha}{x}+\frac{\beta}{\sqrt{ab}x^2}\right),$$
where $(a,b)$ such that  $0<a<b$ is the unique solution of
\begin{equation*}
\left\{\begin{array}{lc}
1-\lambda+\alpha\sqrt{ab}-\beta\frac{a+b}{ab}&=0,\\
1+\lambda+\frac{\beta}{\sqrt{ab}}-\alpha\frac{a+b}{2}&=0.
\end{array}\right.
\end{equation*}

The Cauchy-Stieltjes transform of the  free-GIG distribution  $\mu=\mu(\lambda, \alpha, \beta)$ is equal
\begin{equation*}
G_\mu(z)=\frac{\alpha z^2-(\lambda-1)z-\beta-(\alpha z+\frac{\beta}{\sqrt{ab}})\sqrt{(z-a)(z-b)}}{2z^2}.
\end{equation*}
See \cite{feral2006limiting} for more details.

It is easy to check that the Cauchy-Stieltjes transform $G=G(z)$ of the  free-GIG distribution $ \mu(\lambda, \alpha, \beta)$ satisfies the following quadratic equation $$	z^2G^2-(\alpha z^2-(\lambda-1)z-\beta)G+\alpha z+\delta=0.$$ where $\delta$ depends on $\alpha, \beta,\lambda$. The following lemma that can be extracted from the proof of (\cite{szpojankowski2017matsumoto}, Theorem 4.1.)  shows the converse of this statement.

\begin{lemma}\label{giglemma}
	Suppose the function $G=G(z)$ satisfies the following  equation
	\begin{equation*}
	z^2G^2-(\alpha z^2-(\lambda-1)z-\beta)G+\alpha z+\delta=0
	\end{equation*}
	i.e. $$G(z)=\frac{\alpha z^2-(\lambda-1)z-\beta\pm\sqrt{(\alpha z^2-(\lambda-1)z-\beta)^2-4z^2(\alpha z+\delta)}}{2z^2},$$
	for some $\alpha, \beta, \delta>0$ and $\lambda\in\mathbb{R}$. If $G$ is the the Cauchy-Stieltjes transform of a positive random variable $\bX$, then $\delta$ is uniquely determined by $\alpha,\beta,\lambda$ and $\bX$ has the  free-GIG distribution $\mu(\lambda,\alpha,\beta)$.
\end{lemma}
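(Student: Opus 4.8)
The plan is to show that the quadratic equation, together with the hypothesis that $G$ is the Cauchy–Stieltjes transform of a positive random variable, forces $G$ to coincide with the known free-GIG Cauchy transform for a suitable choice of the endpoints $a,b$ and of $\delta$. First I would analyze the quadratic $z^2G^2-(\alpha z^2-(\lambda-1)z-\beta)G+\alpha z+\delta=0$ as a relation defining a two-valued algebraic function, and identify which branch can possibly be a Cauchy transform: since $G_\bX(z)\sim 1/z$ as $z\to\infty$, expanding $G(z)=\frac{\alpha z^2-(\lambda-1)z-\beta-(\alpha z+\tfrac{\beta}{\sqrt{ab}})\sqrt{(z-a)(z-b)}}{2z^2}$-type candidates and matching the asymptotics $zG(z)\to1$ picks out the branch with the minus sign and also pins down a linear relation among $\alpha$, $\lambda$ and the (as yet unknown) coefficients appearing under the square root. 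The discriminant $(\alpha z^2-(\lambda-1)z-\beta)^2-4z^2(\alpha z+\delta)$ is a quartic in $z$; for $G$ to be a Cauchy transform of a compactly supported measure on $(0,\infty)$ this quartic must factor as $(\alpha z+c)^2(z-a)(z-b)$ for some real $a,b,c$, because $G$ must be analytic off a compact real interval and behave like a square-root there. Comparing coefficients of this factorization gives a system expressing $c$, $a+b$, $ab$ and $\delta$ in terms of $\alpha,\beta,\lambda$; in particular $c=\beta/\sqrt{ab}$ is forced, which is exactly the shape appearing in the free-GIG transform.

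Next I would check that the resulting $(a,b)$ satisfies precisely the defining system for the free-GIG parameters, namely $1-\lambda+\alpha\sqrt{ab}-\beta\frac{a+b}{ab}=0$ and $1+\lambda+\frac{\beta}{\sqrt{ab}}-\alpha\frac{a+b}{2}=0$. These two equations should drop out of the coefficient comparison in the factorization of the quartic discriminant together with the asymptotic normalization $zG(z)\to1$ and the requirement that the $O(1/z^2)$ term of $G$ be consistent (equivalently, that $\varphi(\bI)=1$ and $\varphi(\bX)$ be finite). Positivity of $\bX$ enters to guarantee $0<a<b$ (the support lies in $(0,\infty)$), which also forces the sign choices and rules out the spurious root of the quadratic. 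Once $(a,b)$ is identified with the free-GIG endpoints and $c=\beta/\sqrt{ab}$, the formula for $G$ is literally the free-GIG Cauchy–Stieltjes transform recorded above, hence by uniqueness of the measure with a given Cauchy transform, $\bX\sim\mu(\lambda,\alpha,\beta)$; and $\delta$, being determined by $a,b,\alpha,\beta$ through the coefficient relations, is a function of $\alpha,\beta,\lambda$ alone.

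The main obstacle I anticipate is the discriminant factorization step: one must argue rigorously that the quartic $(\alpha z^2-(\lambda-1)z-\beta)^2-4z^2(\alpha z+\delta)$ is a perfect square times a quadratic with two positive roots, rather than merely assuming it. The clean way is to use the analytic structure of Cauchy transforms: $G$ extends analytically to $\mathbb{C}\setminus K$ for some compact $K\subset\mathbb{R}$, so the branch points of the algebraic function $G$ must all lie in $K$, and at a genuine endpoint of the support the density has square-root behavior, forcing the branch points to be simple and the remaining zeros of the discriminant to be double; counting with multiplicity in a quartic then leaves exactly the factorization $(\text{linear})^2\cdot(\text{quadratic})$. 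Establishing positivity $0<a<b$ from positivity of $\bX$ (rather than just reality) requires that $\mathrm{supp}(\mu_\bX)\subset(0,\infty)$, so $G$ is analytic and real on $(-\infty,0]$, which excludes $0$ and negative numbers from being branch points. After these structural points are in place, the remaining work is the routine but slightly lengthy bookkeeping of matching coefficients, which I would not carry out in detail here.
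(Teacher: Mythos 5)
Note first that the paper does not actually prove this lemma: it is stated as something that ``can be extracted from the proof of (\cite{szpojankowski2017matsumoto}, Theorem 4.1)''. So there is no in-paper argument to compare against, but your route --- pick the branch with $zG(z)\to 1$, force the quartic discriminant to factor as $(\alpha z+c)^2(z-a)(z-b)$ with $0<a<b$, then match coefficients against the defining system for the free-GIG endpoints --- is the natural and expected one. The gap is in the justification of the factorization step, which you correctly flag as the crux but then resolve incorrectly. It is true that every odd-order zero of $\Delta(z)=(\alpha z^2-(\lambda-1)z-\beta)^2-4z^2(\alpha z+\delta)$ must lie in $\mathrm{supp}(\mu_\bX)\subset(0,\infty)$ (since $2z^2G-P$ is analytic off the support and squares to $\Delta$, and complex or exterior odd-order zeros would be branch points). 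But this permits \emph{four} simple real zeros $r_1<r_2<r_3<r_4$ in $(0,\infty)$, i.e.\ a candidate measure supported on two bands; that configuration also exhibits square-root behaviour at all four endpoints, so ``square-root behaviour at genuine endpoints forces the remaining zeros to be double'' does not follow, and your multiplicity count does not close. What actually kills the four-root case is positivity: with $\sqrt{\Delta}$ normalized by $\sqrt{\Delta(z)}\sim\alpha z^2$ at infinity, the boundary value $\sqrt{\Delta(x+i0)}$ picks up a factor of $i$ at each simple zero crossed, so the Stieltjes-inversion density $-\frac{1}{\pi}\mathrm{Im}\,G(x+i0)$ has opposite signs on $(r_1,r_2)$ and $(r_3,r_4)$, contradicting $\mathrm{Im}\,G\le 0$ on $\mathbb{C}^+$. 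You need this Herglotz-type sign argument (or an equivalent one) explicitly; it is the real content of the step.

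Two of the points you defer as ``routine bookkeeping'' also carry content. First, the constant-term comparison $\Delta(0)=\beta^2=\alpha^2c^2ab/\alpha^2\cdot(\dots)$ only determines $c^2$, so the location of the double root could a priori be $+\beta/(\alpha\sqrt{ab})>0$ (possibly inside $(a,b)$, where the density $|x-d|\sqrt{(x-a)(b-x)}$ would still be nonnegative); the sign $c=\beta/\sqrt{ab}$, equivalently double root at $-\beta/(\alpha\sqrt{ab})<0$, must be extracted from the $z^3$-coefficient relation combined with the others, and this is exactly what reproduces the second equation of the free-GIG system. Second, the assertion that $\delta$ is \emph{uniquely} determined by $\alpha,\beta,\lambda$ requires showing the coefficient system for $(a,b,c,\delta)$ has a unique admissible solution with $0<a<b$ --- note that a direct moment expansion of the quadratic at infinity only gives $\delta=\alpha\varphi(\bX)-\lambda$, so uniqueness of $\delta$ is equivalent to uniqueness of $\varphi(\bX)$ and is not free. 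With the sign argument supplied and these two items carried out, the proof goes through.
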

\subsection{The free Matsumoto-Yor property.}
The following independence property was observed by Matsumoto and Yor in \cite{matsumoto2001analogue}: If $X\sim GIG(-p,a,b)$ and $Y\sim G(p,a)$ are independent random variables, then $$U=\frac{1}{X+Y}\ \textrm{and}\  V=\frac{1}{X}-\frac{1}{X+Y}$$
 are also independent and distributed   $GIG(-p,b,a)$ and $G(p,b)$ respectively. 
 
 Later it was shown in \cite{letac2000independence} that the  Matsumoto-Yor property characterizes GIG and Gamma laws:
 \begin{theorem}
 	Let $X$ and $Y$ be positive, independent and non-degenerated random variables. If $U=\frac{1}{X+Y}$ and $V=\frac{1}{X}-\frac{1}{X+Y}$ are independent, then  $X\sim GIG(-p,a,b)$ and $Y\sim G(p,a)$. 
 \end{theorem}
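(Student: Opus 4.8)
\emph{Proof idea.} The plan is to encode the independence of $(U,V)$ as a single functional equation for the densities of $X$ and $Y$ and then to solve that equation explicitly.

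The starting point is the elementary observation that the map $\psi(x,y)=\bigl(\tfrac{1}{x+y},\ \tfrac1x-\tfrac1{x+y}\bigr)$ of $(0,\infty)^{2}$ to itself is an involution: if $(u,v)=\psi(x,y)$ then $u+v=1/x$ and $1/u=x+y$, so $\psi(u,v)=(x,y)$, and moreover $|\det D\psi(x,y)|=\bigl(x(x+y)\bigr)^{-2}$. I would first reduce to the case in which $X$ and $Y$ are absolutely continuous with smooth, strictly positive densities on $(0,\infty)$; this regularity reduction is the one genuinely delicate point, and I would carry it out by bootstrapping in the functional equation below together with the convolution identities forced by $1/X=U+V$ and $1/U=X+Y$ (so that the law of $1/X$ equals the convolution of the laws of $U,V$, and the law of $1/U$ equals that of $X,Y$). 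Granting smoothness, let $f,g,\alpha,\beta$ denote the densities of $X,Y,U,V$; transporting $f(x)g(y)\,dx\,dy$ through $\psi$ and using $U\perp V$ yields, for all $u,v>0$,
\begin{equation*}
f\!\left(\tfrac{1}{u+v}\right)g\!\left(\tfrac{v}{u(u+v)}\right)=u^{2}(u+v)^{2}\,\alpha(u)\,\beta(v).
\end{equation*}

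Now take logarithms and set $P=1/(u+v)$, $Q=v/(u(u+v))$; one checks $P+Q=1/u$, $\partial_{u}P=\partial_{v}P=-P^{2}$, $\partial_{v}Q=P^{2}$ and $\partial_{u}Q=-Q(2P+Q)$. The right-hand side is a sum of a function of $u$ alone and a function of $v$ alone, so $\partial_{u}\partial_{v}$ annihilates it, and a short computation collapses the logarithmic equation to
\begin{equation*}
P^{4}(\log f)''(P)+2P^{3}(\log f)'(P)+2P^{2}=2P^{3}\bigl(Q\,(\log g)'(Q)\bigr)'+P^{2}Q^{2}(\log g)''(Q),
\end{equation*}
valid for \emph{all} $P,Q>0$ independently, since $(u,v)\mapsto(P,Q)$ is a bijection of $(0,\infty)^{2}$ onto itself. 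The left side depends only on $P$, while the right side equals $2P^{3}m(Q)+P^{2}n(Q)$ with $m(Q)=\bigl(Q(\log g)'(Q)\bigr)'$ and $n(Q)=Q^{2}(\log g)''(Q)$; since $P^{2}$ and $P^{3}$ are linearly independent on $(0,\infty)$, both $m$ and $n$ must be constant, say $m\equiv m_{0}$ and $n\equiv n_{0}$. From $Q^{2}(\log g)''(Q)\equiv n_{0}$ we obtain $(\log g)'(Q)=-n_{0}/Q+C$, hence $g(y)\propto y^{-n_{0}}e^{Cy}$; integrability on $(0,\infty)$ forces $n_{0}<1$ and $C<0$, so with $p:=1-n_{0}>0$ and $a:=-C>0$ we get $Y\sim G(p,a)$ (and automatically $m_{0}=C=-a$). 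Inserting the now-known right-hand side $-2aP^{3}+(1-p)P^{2}$, dividing by $P^{2}$, recognizing the left side as $\bigl(P^{2}(\log f)'(P)\bigr)'+2$, and integrating gives $(\log f)'(P)=-a-(p+1)/P+b/P^{2}$ for a constant $b$, hence $f(x)\propto x^{-p-1}e^{-ax-b/x}$; integrability at $0$ forces $b>0$, and this is precisely the $GIG(-p,a,b)$ density. Thus $X\sim GIG(-p,a,b)$ and $Y\sim G(p,a)$ with the same $p>0$ and $a>0$ --- the Matsumoto--Yor matching of parameters emerging automatically --- which is the claim.

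The step I expect to be the real obstacle is not the above but the preliminary reduction to smooth densities: the hypotheses supply only non-degeneracy, so one must first exclude singular and atomic components and upgrade mere measurability to enough smoothness for the differentiation step to make sense. I would attack this by specializing the functional equation to fixed slices $u=u_{0}$ or $v=v_{0}$ and iterating, using the two convolution identities above to gain regularity at each pass; everything after that is elementary, requiring only careful bookkeeping in the $\partial_{u}\partial_{v}$ step and a check that the integrability constraints leave exactly the two-parameter GIG--Gamma family and no spurious branches.
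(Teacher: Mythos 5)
First, a contextual remark: the paper does not prove this statement at all --- it is quoted verbatim from Letac and Weso\l{}owski \cite{letac2000independence} as background for the free analogue, so there is no in-paper proof to compare against. Judged on its own terms, your calculational core is correct: the map $\psi$ is indeed an involution with Jacobian $(x(x+y))^{-2}$, the functional equation for the densities is right, the $\partial_u\partial_v$ computation (using $\partial_u\partial_v\,2\log(u+v)=-2P^2$) does collapse to the displayed identity, the separation argument via linear independence of $P^2$ and $P^3$ is valid because $(u,v)\mapsto(P,Q)$ is a bijection of $(0,\infty)^2$ onto itself, and the two integrations together with the integrability constraints do single out exactly the $G(p,a)$ and $GIG(-p,a,b)$ densities with matched parameters. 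So under the additional hypothesis that $X$ and $Y$ have smooth, strictly positive densities on $(0,\infty)$, your argument is a complete and correct proof.

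The genuine gap is exactly the one you flag and then defer: the theorem assumes only positivity, independence and non-degeneracy, and nothing in your sketch actually rules out singular or atomic laws, let alone upgrades a merely measurable density to one that is twice differentiable and everywhere positive (positivity of $f$ and $g$ on all of $(0,\infty)$ is silently used when you take logarithms and when you assert the separated equation holds for \emph{all} $P,Q>0$). The proposed bootstrap via the convolution identities ``law of $1/X$ = law of $U$ $*$ law of $V$'' and ``law of $1/U$ = law of $X$ $*$ law of $Y$'' does not obviously get off the ground: convolution of two purely atomic measures is purely atomic, so these identities alone cannot exclude discrete solutions, and they give no differentiability for free. This is not a technicality --- it is the reason the published proof of Letac and Weso\l{}owski proceeds through Laplace transforms (which are automatically smooth) rather than through densities. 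As written, your argument proves a weaker theorem with an added regularity hypothesis; to prove the statement as given you would need either to supply the regularity reduction in full or to switch to a transform-based argument.
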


The Matsumoto-Yor property in free probability was studied in \cite{szpojankowski2017matsumoto} where the author proved the following theorems: 
\begin{theorem}
	Let $\bX$ and $\bY$ be self-adjoint random variables such that $\bX$ has the free-GIG distribution $\mu(-\lambda,\alpha, \beta)$ and the distribution of $\bY$ is free-Poisson $\nu(\lambda,1/\alpha)$. If $\bX, \bY$ are free, then
	 \begin{equation}\bU=(\bX+\bY)^{-1}\ \textrm{and}\ \bV=\bX^{-1}-(\bX+\bY)^{-1}\label{uuuvvv}\end{equation}
	  are free. Moreover $\bU$ and $\bV$ have   $\mu(-\lambda,\beta,\alpha)$ and $\nu(\lambda,1/\beta)$ distributions respectively. 
\end{theorem}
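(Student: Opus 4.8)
The plan is to split the statement into its three assertions---the law of $\bU$, the law of $\bV$, and the freeness of $\bU$ and $\bV$---and to obtain the two marginals by an essentially analytic/combinatorial computation while isolating the freeness as the one genuinely hard point. Throughout write $S:=\bX+\bY$, so that $\bU=S^{-1}$, $\bU+\bV=\bX^{-1}$, and, by the resolvent identity $\bX^{-1}-S^{-1}=S^{-1}(S-\bX)\bX^{-1}$, also $\bV=S^{-1}\bY\bX^{-1}=\bU\bY\bX^{-1}$.

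First the law of $S$. Since $\bX$ and $\bY$ are free, \eqref{rxpy} gives $r_{S}=r_{\bX}+r_{\bY}$; the free-Poisson formula yields $r_{\bY}(z)=\lambda/(\alpha-z)$, and from the quadratic equation satisfied by $G_{\bX}$ (Lemma \ref{giglemma} with parameter $-\lambda$) one extracts $r_{\bX}$ explicitly by inverting $G_{\bX}$ and subtracting $1/z$. Adding the two and re-inverting, one checks that $G_{S}$ satisfies the quadratic equation of the free-GIG law $\mu(\lambda,\alpha,\beta)$, so by Lemma \ref{giglemma} (and positivity of $S$) we get $S\sim\mu(\lambda,\alpha,\beta)$; equivalently $\mu(-\lambda,\alpha,\beta)\boxplus\nu(\lambda,1/\alpha)=\mu(\lambda,\alpha,\beta)$. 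Now $\bU=S^{-1}$, and the elementary identity $G_{S^{-1}}(z)=\tfrac1z-\tfrac1{z^2}G_{S}(1/z)$, valid for positive invertible $S$, combined with the explicit form of $G_{\mu(\lambda,\alpha,\beta)}$, gives $G_{\bU}=G_{\mu(-\lambda,\beta,\alpha)}$; this is the free analogue of $GIG(p,a,b)^{-1}\sim GIG(-p,b,a)$ and can be recorded once as a standalone lemma on the free-GIG family. Applying the same identity to $\bX$ itself gives $\bX^{-1}\sim\mu(\lambda,\beta,\alpha)$, which we use below.

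Next, freeness of $\bU$ and $\bV$. Since $W^{*}(\bU)=W^{*}(S)$, it suffices to show $\varphi\bigl(f_1(S)g_1(\bV)\cdots f_n(S)g_n(\bV)\bigr)=0$ whenever $\varphi(f_i(S))=\varphi(g_i(\bV))=0$, and by analyticity one may take each $f_i,g_i$ to be a resolvent. Using $\bV=\bU\bY\bX^{-1}=S^{-1}\bY\bX^{-1}$, every such mixed moment becomes a value of $\varphi$ on an alternating word built from resolvents of $\bX$, resolvents of $\bY$, and resolvents of $S=\bX+\bY$. The role of the subordination relation \eqref{resolventeqation} together with Biane's expansion \eqref{cofz}, and of the Boolean-cumulant identities of Proposition \ref{propbool}, is precisely that resolvents of $\bX+\bY$ expand into series in the free building blocks $\bX$ and $\bY$: substituting these expansions and repeatedly invoking freeness of $\bX$ and $\bY$ (together with Propositions \ref{cor45} and \ref{prop212} to simplify the nested Boolean cumulants that arise, along the lines of the manipulations in Section \ref{technicalresults}) should collapse the alternating mixed moment to $0$. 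Once freeness is in hand the law of $\bV$ is automatic: $r_{\bV}=r_{\bU+\bV}-r_{\bU}=r_{\mu(\lambda,\beta,\alpha)}-r_{\mu(-\lambda,\beta,\alpha)}$, which by the $r$-transform identity of the previous paragraph (with $\alpha$ and $\beta$ interchanged, rearranged) equals $r_{\nu(\lambda,1/\beta)}$; hence $\bV\sim\nu(\lambda,1/\beta)$.

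The main obstacle is the freeness step. The reduction to alternating mixed moments in resolvents is routine, but evaluating them forces one to juggle three resolvent families at once---those of $\bX$, of $\bY$, and of the nonlinear object $\bX+\bY$---and to keep the combinatorics of the nested subordination/Boolean-cumulant expansions under control; this is exactly the bookkeeping that the machinery of this paper is built to handle, so I would push it through by hand. As a fallback one can bypass the free computation entirely by a random-matrix approximation: finite-size matrix GIG and Wishart ensembles satisfy the matrix Matsumoto--Yor property of \cite{letac2000independence}, are asymptotically free, and converge in distribution to free-GIG and free-Poisson, so the free statement follows by passing to the limit---provided one verifies that the convergence is strong enough to transfer freeness of $\bU$ and $\bV$, not merely their marginal laws.
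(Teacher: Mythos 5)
First, a point of comparison: the paper does not prove this theorem at all --- it is quoted, together with its converse and Lemma \ref{szpolemm}, from \cite{szpojankowski2017matsumoto}. So your proposal has to stand on its own, and the parts of it that compute marginal laws are fine in outline: $\bX+\bY\sim\mu(\lambda,\alpha,\beta)$ via \eqref{rxpy} is exactly Lemma \ref{szpolemm}; the identity $G_{S^{-1}}(z)=\tfrac1z-\tfrac1{z^2}G_S(1/z)$ together with Lemma \ref{giglemma} does give $\bU\sim\mu(-\lambda,\beta,\alpha)$ and $\bX^{-1}\sim\mu(\lambda,\beta,\alpha)$; and, \emph{granted} freeness of $\bU$ and $\bV$, the subtraction $r_\bV=r_{\bX^{-1}}-r_\bU$ correctly identifies the law of $\bV$ as $\nu(\lambda,1/\beta)$.

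The genuine gap is the freeness of $\bU$ and $\bV$, which is the whole content of the theorem and which you do not prove. Your reduction is already off: a resolvent of $\bV=(\bX+\bY)^{-1}\bY\bX^{-1}$ is not an alternating word in resolvents of $\bX$, $\bY$ and $\bX+\bY$ in any usable sense --- expanding it produces nested, non-alternating words mixing $\bX^{-1}$, $\bY$ and $(\bX+\bY)^{-1}$ --- and the assertion that substituting \eqref{resolventeqation}, \eqref{cofz} and the Boolean-cumulant identities ``should collapse'' all centered alternating mixed moments to zero is precisely the statement to be proved, not an argument for it. A decisive sanity check: nothing in your freeness sketch uses the hypothesis that $\bX$ is free-GIG and $\bY$ is free-Poisson, yet by the converse characterization (the next theorem in the paper) freeness of $\bU$ and $\bV$ \emph{fails} for every other pair of laws; any argument of the shape you describe, which would apply to arbitrary free positive $\bX,\bY$, therefore cannot be correct as stated. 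The random-matrix fallback is likewise only a plan: it needs (i) a matrix GIG ensemble converging to the free GIG under the parameter scaling of \cite{feral2006limiting}, (ii) compatibility of that scaling with the matrix Matsumoto--Yor property of \cite{letac2000independence}, and (iii) asymptotic freeness of the independent unitarily invariant pair $(U_N,V_N)$ strong enough to identify the limiting joint distribution of $(\bU,\bV)$, none of which is verified. As it stands, the proposal establishes the two marginal laws conditionally on freeness and leaves the freeness itself unproved.
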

\begin{theorem}
	Let $\bX$ and $\bY$ be free positive self-adjoint random variables. If $\bU, \bV$ defined as in \eqref{uuuvvv} are free, then  $\bX$ has the  free-GIG distribution $\mu(-\lambda,\alpha, \beta)$ and the distribution of $\bY$ is free-Poisson $\nu(\lambda,1/\alpha)$ for some parameters $\alpha, \beta>0$ and $\lambda\in\mathbb{R}$.
\end{theorem}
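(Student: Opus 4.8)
The plan is to reduce the freeness of $\bU$ and $\bV$ to a pair of constant regression conditions of the type \eqref{jedenjeden} and then to invoke the regression characterization established in Section \ref{maintheorems}. A preliminary observation that makes the setup clean: the transformation $(\bX,\bY)\mapsto(\bU,\bV)$ in \eqref{uuuvvv} is involutive. Indeed, $\bX^{-1}$ occurs in $\bV$, so $\bX$ is invertible; hence $\bX+\bY\ge\bX>0$ is invertible, $\bU=(\bX+\bY)^{-1}$ is positive and invertible, and $\bV=\bX^{-1}-\bU\ge 0$ by operator monotonicity of the inverse. From $\bU^{-1}=\bX+\bY$ and $\bU+\bV=\bX^{-1}$ one recovers $\bX=(\bU+\bV)^{-1}$ and $\bY=\bU^{-1}-(\bU+\bV)^{-1}$, i.e. the same map; so the pairs $(\bX,\bY)$ and $(\bU,\bV)$ play symmetric roles.

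Now, since $\bU$ and $\bV$ are free, for every polynomial $p$ and every $k\ge 1$ the usual expansion into alternating centered products gives $\varphi\left(p(\bU)\,\bV^k\right)=\varphi\left(p(\bU)\right)\varphi\left(\bV^k\right)$, so the conditional expectation onto $W^\ast(\bU)$ is scalar on the algebra generated by $\bV$; in particular $\varphi(\bV\mid\bU)=\varphi(\bV)\,\bI$ and $\varphi(\bV^2\mid\bU)=\varphi(\bV^2)\,\bI$, the two moments being finite because $\bV$ is bounded. Thus the hypotheses of the characterization theorem for the case $(k,l)=(1,2)$ hold; this choice of exponents is convenient because it requires no invertibility of $\bV$, but the cases $(1,-1)$ and $(-1,-2)$ serve equally well once one knows (or assumes) that $\bV$ is invertible. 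Applying that theorem yields that $\bX$ has a free-GIG distribution $\mu(-\lambda,\alpha,\beta)$ and $\bY$ has the free-Poisson distribution $\nu(\lambda,1/\alpha)$, which is the claim.

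The substance of the argument is therefore the regression characterization itself. Its proof, carried out in Sections \ref{technicalresults} and \ref{maintheorems}, consists in turning the two scalar identities into equations relating $\omega_1$, $\omega_2$ and $\gxy$ --- using the subordination identity \eqref{resolventeqation}, the Boolean-cumulant formulas of Proposition \ref{propbool}, and the expansion of $1/\od$ from Lemma \ref{techlemfor1} --- and then solving these equations so as to recognize the resulting quadratic relation for $G_\bX$ as that of a free-GIG law via Lemma \ref{giglemma} (and the $r$-transform $\ry$ as that of a free-Poisson law). Within the reduction above the only delicate point is the treatment of degenerate configurations, e.g. $\bY$ scalar (so $\bV=0$ and freeness is vacuous): these either lie outside the standing positivity/non-degeneracy conventions or correspond to boundary values of the parameters of the two families, so the displayed statement stays literally true. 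One could also bypass the $(1,2)$ theorem and feed the scalar identities straight into \eqref{omega12g} together with $1/\od=\sum_{n\ge 0}\beta_{2n+1}\left(\bRx,\bY,\bRx,\ldots,\bY,\bRx\right)$, but that just reproduces the Section \ref{maintheorems} computation.
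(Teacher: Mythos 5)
Your reduction is essentially correct, but note that the paper contains no proof of this statement for you to have matched or missed: it is quoted as background from \cite{szpojankowski2017matsumoto}. What you propose is therefore a genuinely different route, namely to re-derive the freeness characterization as a corollary of the paper's own Section \ref{maintheorems} results. The core step is sound: freeness of $\bU$ and $\bV$ gives $\varphi(a\,\bV^k)=\varphi(a)\varphi(\bV^k)$ for every $a$ in the algebra generated by $\bU$, hence $\varphi(\bV^k\mid\bU)=\varphi(\bV^k)\,\bI$, so the hypotheses of the $(k,l)=(1,2)$ regression theorem hold with $c=\varphi(\bV)$ and $b=\varphi(\bV^2)$, and its conclusion returns exactly $\mu(-\lambda,\alpha,\beta)$ for $\bX$ and $\nu(\lambda,1/\alpha)$ for $\bY$ upon setting $\lambda=c^2/(b-c^2)$, $\alpha=\rho/(b-c^2)$, $\beta=c/(b-c^2)$. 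There is no circularity, since Section \ref{maintheorems} imports from \cite{szpojankowski2017matsumoto} only the analytic Lemma \ref{giglemma} and the convolution identity Lemma \ref{szpolemm}, not the characterization itself. What the reduction buys is a proof from strictly weaker hypotheses (two scalar conditional moments instead of full freeness), self-contained within this paper; what it costs is that it inherits every implicit non-degeneracy assumption of the regression theorems.

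That last point is where your write-up is too quick. The parenthetical ``$\bY$ scalar (so $\bV=0$)'' is wrong: $\bV=0$ forces $\bY=0$, while for $\bY=y\bI$ with $y>0$ and $\bX$ non-scalar both $\bU$ and $\bV$ are non-constant functions of $\bX$ and hence not free, so the hypothesis fails harmlessly. The genuinely troublesome configurations are $\bY=0$ (then $\bV=0$, freeness of $\bU,\bV$ is automatic, yet an arbitrary positive $\bX$ need not be free-GIG) and $\bX,\bY$ both scalar; in these the displayed statement is literally false, so it does not ``stay literally true'' --- the free statement needs the same non-degeneracy hypothesis that the classical theorem quoted just above it makes explicit. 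Concretely, the $(1,2)$ proof uses $b>c^2$ (strict Cauchy--Schwarz, i.e.\ $\bV$ not a scalar) and $c>0$ (i.e.\ $\bV\neq 0$, by faithfulness). The clean fix is to assume $\bX,\bY$ non-degenerate and observe that then $\bV$ cannot be scalar: if $\bV=v\bI$ then $\bY=(\bX^{-1}-v\bI)^{-1}-\bX$ is a function of $\bX$, and a non-degenerate variable cannot be free from itself. With that remark added, your argument is complete.
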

The following lemma will also be useful.
\begin{lemma}[\cite{szpojankowski2017matsumoto}, Remark 2.1]\label{szpolemm}
	Let $\bX$ and $\bY$ be self-adjoint random variables such that $\bX$ has the free-GIG distribution $\mu(-\lambda,\alpha, \beta)$ and the distribution of $\bY$ is free-Poisson $\nu(\lambda,1/\alpha)$. Then the distribution of $\bX+\bY$ is free-GIG $\mu(\lambda,\alpha, \beta)$.
\end{lemma}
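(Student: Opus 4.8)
The plan is to linearize the free additive convolution through the $r$-transform. Since $\bX$ and $\bY$ are free, \eqref{rxpy} gives $\rxy=\rx+\ry$, and the $r$-transform of the free-Poisson law $\nu(\lambda,1/\alpha)$ is $\ry=\frac{\lambda/\alpha}{1-z/\alpha}=\frac{\lambda}{\alpha-z}$. So it suffices to show that $\rx+\frac{\lambda}{\alpha-z}$, with $\bX$ free-GIG $\mu(-\lambda,\alpha,\beta)$, is the $r$-transform of $\mu(\lambda,\alpha,\beta)$, and then to translate this back into a statement about Cauchy--Stieltjes transforms so that Lemma \ref{giglemma} can be invoked.

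First I would record the quadratic equation satisfied by the $r$-transform of a free-GIG law. Evaluating the defining quadratic for $G_{\mu(\lambda,\alpha,\beta)}$ at argument $w$ and value $z$ (so $w=G^{-1}(z)=r_{\mu(\lambda,\alpha,\beta)}(z)+\frac1z$) gives $w^2z^2-(\alpha w^2-(\lambda-1)w-\beta)z+\alpha w+\delta=0$; substituting $w=\frac1z+R_\lambda$ with $R_\lambda:=r_{\mu(\lambda,\alpha,\beta)}(z)$ and simplifying yields
\begin{equation*}
z(z-\alpha)R_\lambda^2+\bigl((\lambda+1)z-\alpha\bigr)R_\lambda+\bigl(\beta z+\lambda+\delta_\lambda\bigr)=0,
\end{equation*}
where $\delta_\lambda=\delta(\lambda,\alpha,\beta)>0$ is the constant from Lemma \ref{giglemma}. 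Replacing $\lambda$ by $-\lambda$ gives the corresponding equation for $R_{-\lambda}=\rx$.

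The core of the argument is then the substitution $R_{-\lambda}=S-\frac{\lambda}{\alpha-z}$ into the equation for $R_{-\lambda}$: after clearing the denominator $z-\alpha$ and simplifying, the terms carrying that denominator collapse to a constant, and one finds that $S:=\rxy=R_{-\lambda}+\frac{\lambda}{\alpha-z}$ satisfies
\begin{equation*}
z(z-\alpha)S^2+\bigl((\lambda+1)z-\alpha\bigr)S+\bigl(\beta z+\delta_{-\lambda}\bigr)=0,
\end{equation*}
which is exactly the free-GIG $r$-transform equation at parameter $\lambda$, with the constant $\lambda+\delta_\lambda$ replaced by $\delta_{-\lambda}$. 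Undoing the first substitution — writing $S(z)=\zeta-\frac1z$ with $z=G_{\bX+\bY}(\zeta)$ and rearranging as a quadratic in $G_{\bX+\bY}(\zeta)$ — then produces, for $g=G_{\bX+\bY}$,
\begin{equation*}
z^2g^2-\bigl(\alpha z^2-(\lambda-1)z-\beta\bigr)g+\alpha z+\delta'=0,\qquad\delta':=\delta_{-\lambda}-\lambda,
\end{equation*}
valid first near infinity and hence, by analytic continuation, on all of $\mathbb{C}\setminus\textrm{supp}(\mu_{\bX+\bY})$.

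It remains to check the hypotheses of Lemma \ref{giglemma}. Since $\bY$ is free-Poisson it is a positive random variable, and $\bX\sim\mu(-\lambda,\alpha,\beta)$ has spectrum in $[a,b]\subset(0,\infty)$, so $\bX+\bY\geq\bX\geq a\bI$; thus $\mu_{\bX+\bY}$ is compactly supported in $[a,\infty)$ and $g$ is analytic near $0$ with $g(0)=-\varphi\bigl((\bX+\bY)^{-1}\bigr)<0$. Setting $z=0$ in the last display gives $\beta g(0)+\delta'=0$, hence $\delta'=-\beta g(0)>0$. Therefore $g$ is the Cauchy--Stieltjes transform of the positive random variable $\bX+\bY$ and satisfies the free-GIG equation with $\alpha,\beta,\delta'>0$ and $\lambda\in\mathbb{R}$, so Lemma \ref{giglemma} identifies the distribution of $\bX+\bY$ as $\mu(\lambda,\alpha,\beta)$ (and incidentally forces $\delta_{-\lambda}=\delta_\lambda+\lambda$). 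The one step that is not merely formal is establishing $\delta'>0$, where positivity of $\bX+\bY$ is genuinely used; the two substitutions into quadratics are the technical heart but are routine.
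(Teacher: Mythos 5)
Your proof is correct. Note, however, that the paper does not prove this lemma at all: it is imported verbatim from \cite{szpojankowski2017matsumoto} (Remark 2.1), so there is no in-paper argument to compare against. What you have produced is a self-contained derivation, and the route is sound: you transport the quadratic satisfied by $G_{\mu(\lambda,\alpha,\beta)}$ to a quadratic
$z(z-\alpha)R_\lambda^2+((\lambda+1)z-\alpha)R_\lambda+(\beta z+\lambda+\delta_\lambda)=0$
for the $r$-transform (I checked the coefficient bookkeeping in both substitutions $w=\tfrac1z+R$ and $R_{-\lambda}=S+\tfrac{\lambda}{z-\alpha}$; the rational terms do collapse to the constant $\lambda$, so $S=\rxy$ satisfies the parameter-$\lambda$ equation with constant $\beta z+\delta_{-\lambda}$), then return to the Cauchy--Stieltjes picture and invoke Lemma \ref{giglemma}. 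Two points deserve the emphasis you gave them: the identities hold a priori only near $0$ (respectively near infinity) and extend by analytic continuation since both sides of the polynomial relation $P(z,g(z))=0$ are analytic off the support; and Lemma \ref{giglemma} requires $\delta'>0$, which you obtain cleanly from $\delta'=-\beta g(0)=\beta\varphi((\bX+\bY)^{-1})>0$, using that $\bX+\bY\geq a\bI$ with $a>0$ so that $g$ is analytic at $0$. This also yields the identity $\delta_{-\lambda}=\delta_\lambda+\lambda$ as a byproduct. The argument is a legitimate and arguably the most natural proof of the cited remark, consistent with how the rest of the paper (e.g.\ the lemmas identifying $r_\bY$ and then applying Lemmas \ref{giglemma} and \ref{szpolemm}) manipulates these transforms.
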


\section{Analytic interpretation of regression conditions} \label{technicalresults}
In this section we  prove a few auxiliary results that will be useful in the sequel.

\subsection{Subordination vs Boolean cumulants}
\begin{lemma}\label{techlemma1}
Let $\bX$ and $\bY$ be free self-adjoint and compactly supported random variables. Then for $z$ in some neighborhood of infinity in $\mathbb{C}^+$

 \begin{equation}
\sum_{n=0}^\infty\beta_{2n+1}\left(\bRx,\bY,\bRx,\ldots,\bY,\bRx\right)=\frac{1}{\od}.\label{techlemfor1}
	\end{equation}

\end{lemma}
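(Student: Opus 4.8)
The plan is to expand every term $\beta_{2n+1}(\bRx,\bY,\ldots,\bY,\bRx)$ by means of the product formula \eqref{boolmain2}, to sum the resulting series, to recognise the sum as a rescaled $\eta$-transform of $\bRx$, and finally to translate everything back into subordination functions using \eqref{cofz}, \eqref{subordination} and \eqref{omega12g}.

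Write $R:=\bRx$. For $z$ in a neighbourhood of infinity in $\mathbb{C}^+$ one has $\|R\|\to 0$, and since the M\"obius formula \eqref{booleandef2} gives $|\beta_m(\bX_1,\ldots,\bX_m)|\le 2^{m-1}\prod_i\|\bX_i\|$, all the series below are absolutely convergent and may be rearranged freely; I would record this estimate at the outset. Now fix $n\ge 1$ and apply \eqref{boolmain2} with $\bZ_1=\bZ_2$ and the middle variable all taken equal to $R$ — legitimate because $R$ lies in the algebra generated by $\bX$, which is free from $\bY$. Writing $C_m:=\beta_m(R,\ldots,R)$ ($m$ arguments) and $b_i:=\beta_{2i+1}(\bY,R,\ldots,R,\bY)$, this yields
$$\beta_{2n+1}\big(R,\bY,\ldots,\bY,R\big)=\sum_{k=1}^{n}C_{k+1}\sum_{i_1+\cdots+i_k=n-k}\ \prod_{l=1}^{k}b_{i_l}.$$
Adding the $n=0$ term, namely $\beta_1(R)=C_1$, summing over $n\ge 0$, interchanging the order of summation and recognising a $k$-fold Cauchy product, the left-hand side of \eqref{techlemfor1} collapses to
$$\sum_{n=0}^{\infty}\beta_{2n+1}\big(R,\bY,\ldots,\bY,R\big)=\sum_{k=0}^{\infty}C_{k+1}\,B^{k}=\frac{1}{B}\sum_{m=1}^{\infty}C_m B^{m},\qquad B:=\sum_{i=0}^{\infty}b_i.$$

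It then suffices to evaluate $\sum_{m\ge 1}C_m B^m$. Since $\sum_{m\ge 1}C_m t^m=\eta_R(t)$ is the $\eta$-transform of $R$, I would produce a closed form for $\eta_R$: from the identity $\bI-tR=\bRx\big((z-t)\bI-\bX\big)$ one gets, for small $t$,
$$M_R(t)=\varphi\big(tR(\bI-tR)^{-1}\big)=t\,\varphi\big(((z-t)\bI-\bX)^{-1}\big)=t\,G_{\bX}(z-t),$$
hence $\eta_R(t)=t\,G_{\bX}(z-t)/\big(t\,G_{\bX}(z-t)+1\big)$, and substituting $t=B$ gives
$$\sum_{n=0}^{\infty}\beta_{2n+1}\big(R,\bY,\ldots,\bY,R\big)=\frac{1}{B}\,\eta_R(B)=\frac{G_{\bX}(z-B)}{B\,G_{\bX}(z-B)+1}.$$
Finally I identify $B$: by \eqref{cofz}, $B=\sum_{n\ge 0}\beta_{2n+1}(\bY,R,\ldots,R,\bY)=z-\oj$, so $z-B=\oj$ and $G_{\bX}(z-B)=G_{\bX}(\oj)=\gxy$ by \eqref{subordination}; moreover \eqref{omega12g} gives $B=z-\oj=\od-\tfrac{1}{\gxy}$, whence $B\,\gxy+1=\od\,\gxy$, and therefore
$$\sum_{n=0}^{\infty}\beta_{2n+1}\big(R,\bY,\ldots,\bY,R\big)=\frac{\gxy}{\od\,\gxy}=\frac{1}{\od},$$
which is \eqref{techlemfor1}.

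I expect the main work to be bookkeeping rather than conceptual: justifying with care the absolute convergence for $z$ near infinity that licenses all the rearrangements, and getting the index shifts in the application of \eqref{boolmain2} exactly right — in particular that the $n=0$ contribution $\beta_1(R)$ is precisely the $k=0$ term $C_1B^0$, and that the constrained inner sum $\sum_{i_1+\cdots+i_k=n-k}\prod_l b_{i_l}$ assembles, after summing over $n$, into the clean power $B^{k}$. The only genuinely non-routine steps are recognising the collapsed series as $B^{-1}\eta_R(B)$ and the short resolvent computation giving $\eta_R(t)=t\,G_{\bX}(z-t)/(t\,G_{\bX}(z-t)+1)$; once these are in hand, the passage to $1/\od$ is immediate from the relations already recorded in the excerpt.
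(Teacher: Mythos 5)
Your proposal is correct and follows essentially the same route as the paper: expand each $\beta_{2n+1}$ via \eqref{boolmain2}, resum into $\sum_k\beta_{k+1}(\bR,\ldots,\bR)B^k$ with $B=z-\oj$ identified through \eqref{cofz}, evaluate the resulting $\eta$-transform of the resolvent by the same shift identity $M_{\bR}(t)=t\,G_{\bX}(z-t)$, and finish with \eqref{subordination} and \eqref{omega12g}. The only cosmetic difference is that the paper explicitly disposes of the degenerate case $C(z)\equiv 0$ (i.e.\ $\bY=0$) before dividing by $B$, a point your write-up glosses over but which is harmless since the apparent pole of $\eta_{\bR}(t)/t$ at $t=0$ is removable.
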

\begin{remark}\label{rem1}
	It's easy to check that for $|z|>||\bX||$ we have $||\bRx||\leq \left(|z|-||\bX||\right)^{-1}$. The  formula \eqref{booleandef2} implies that $|\beta_n(\bX_1,\ldots,\bX_n)|\leq 2^{n-1} ||\bX_1||\cdot\ldots\cdot||\bX_n||$. Hence   for $|z|>||\bX||$ we have
$$|\beta_{2n+1}\left(\bRx,\bY,\bRx,\ldots,\bY,\bRx\right)|\leq2^{2n}\frac{||\bY||^n}{\left(|z|-||\bX||\right)^{n+1}}.$$
This implies that  the series from  Lemma \ref{techlemma1} converges for $|z|>||\bX||+4||\bY||$ and represents a holomorphic function.
\end{remark}
\begin{proof}
To simplify the  notation we will write $\bR$ for the resolvent $\bRx$.

Let us denote the right hand side of \eqref{techlemfor1} by $D(z)$, i.e.  $$D(z)=\sum_{n=0}^\infty\beta_{2n+1}\left(\bR,\bY,\ldots,\bY,\bR\right).$$
It is easy to check the result when $\bY=0$. In this case $\od=\frac{1}{\gx}$ by \eqref{subordination} and the series  consists of one nonzero element $\beta_1(\bR)=\varphi\left(\bRx\right)=\gx$. Thus for the rest of the proof we   assume $\bY\neq 0$. This implies $\omega_1$ is not an identity functions on $\mathbb{C}^{+}$.


	Formula \eqref{boolmain2} implies that for $n\geq 1$  the cumulant  $\beta_{2n+1}(\bR,\bY,\bR,\ldots,\bY,\bR)$ is equal to 
	$$\sum_{k=1}^{n}\beta_{k+1}(\bR,\bR,\ldots,\bR)\sum_{i_1+\ldots+ i_k=n-k}\prod_{l=1}^k\beta_{2i_{l}+1}(\bY,\bR,\ldots,\bR,\bY)$$
After changing the order of summation one can see that
\begin{equation*}
\begin{split}
D(z)&=\beta_1(\bR)+\sum_{n=1}^\infty\sum_{k=1}^{n}\beta_{k+1}(\bR,\ldots,\bR)\sum_{i_1+\ldots+ i_k=n-k}\prod_{l=1}^k\beta_{2i_{l}+1}(\bY,\bR,\ldots,\bR,\bY)\\
&=\beta_1(\bR)+\sum_{k=1}^{\infty}\beta_{k+1}(\bR,\ldots,\bR)\sum_{n=k}^{\infty}\sum_{i_1+\ldots+ i_k=n-k}\prod_{l=1}^k\beta_{2i_{l}+1}(\bY,\bR,\ldots,\bR,\bY)\\
&=\beta_1(\bR)+\sum_{k=1}^{\infty}\beta_{k+1}(\bR,\ldots,\bR)C(z)^k,
\end{split}
\end{equation*}
	where $$C(z)=\sum_{n=0}^\infty\beta_{2n+1}(\bY,\bR,\ldots,\bR,\bY).$$
	Thus, in view of \eqref{cofz} we can write $C(z)=z-\omega_1(z).$
	
	If $C(z)\neq 0$ one  can write 
	$$D(z)=\frac{\eta_{\bR}\left(C(z)\right)}{C(z)}=\frac{M_{\bR}\left(C(z)\right)}{C(z)\left[M_{\bR}\left(C(z)\right)+1\right]}.$$
	Easy algebraic manipulations and forumula \eqref{subordination} show that
	\begin{multline*}
	M_{\bR}\left(C(z)\right)=\varphi\left(C(z)\bR(\bI-C(z)\bR)^{-1}\right)=C(z)\varphi\left((\bR^{-1}-C(z)\bI)^{-1}\right)\\
	=C(z)\varphi\left((\oj\bI-\bX)^{-1}\right)=C(z)\gxy.
	\end{multline*}
	Thus $$D(z)=\frac{\gxy}{1+(z-\oj)\gxy}=\frac{\gxy}{1+\left(\od-\gxy^{-1}\right)\gxy}=\frac{1}{\od},$$
	where we used formula \eqref{omega12g}. This proves the lemma for all sufficiently large $z\in \mathbb{C}^+$ such that $C(z)\neq 0$ but since $C(z)$ is a nonzero analytic function this last assumption can be dropped.

\end{proof}

\begin{lemma}\label{techlemma2}
	Let $\bX$ and $\bY$ be free self adjoint and compactly supported random variables such that $\bY$ is invertible. Then for all sufficiently large $z\in\mathbb{C}^+$
	\begin{equation}\label{techlemfor2} A(z):=\sum_{n=0}^\infty\beta_{2n+1}(\bY^{-1},\underbrace{\bRx,\bY,\ldots,\bRx,\bY}_{2n})=\frac{1}{\omega_2(z)}+\frac{\varphi(\bY^{-1})}{\omega_2(z)\gxy}.
	\end{equation}
	\begin{equation}\label{techlemfor3}
B(z):=\sum_{n=1}^\infty\beta_{2n+1}(\bY^{-1},\bRx,\bY,\ldots,\bY,\bRx,\bY^{-1})=\frac{\varphi(\bY^{-2})-\varphi(\bY^{-1}) A(z)}{\omega_2(z)}.
\end{equation}
\end{lemma}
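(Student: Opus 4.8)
The plan is to reduce both identities to Lemma \ref{techlemma1} by repeatedly applying Proposition \ref{prop212} (the splitting rule $\beta_n(\bX_1\bX_2,\ldots)=\beta_{n+1}(\bX_1,\bX_2,\ldots)+\beta_1(\bX_1)\beta_n(\bX_2,\ldots)$) together with Proposition \ref{cor45} (a cumulant with $\bI$ at an end vanishes). For \eqref{techlemfor2}, the key observation is that the series $A(z)$ is built from cumulants whose first two entries are $\bY^{-1}$ and $\bRx$. I would argue in the reverse direction: start from the series $\sum_{n}\beta_{2n+1}(\bRx,\bY,\ldots,\bY,\bRx)=1/\od$ from Lemma \ref{techlemma1}, and also write down the analogous series $\sum_n \beta_{2n+1}(\bI,\bRx,\bY,\ldots,\bRx,\bY)$ type expressions. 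More directly: apply Proposition \ref{prop212} to each term $\beta_{2n+1}(\bY^{-1},\bRx,\bY,\bRx,\bY,\ldots,\bRx,\bY)$ by grouping the \emph{last} two entries $\bRx,\bY$ as a product, or better, group $\bY^{-1}\cdot(\text{nothing})$ — the cleaner route is to note $\bY^{-1}\bRx$ cannot be simplified, so instead I would insert $\bI = \bY^{-1}\bY$ conceptually and run the recursion from Proposition \ref{propbool} in the mixed form. Concretely, the factor $\varphi(\bY^{-1})/(\od\,\gxy)$ on the right strongly suggests that one splits off $\beta_1(\bY^{-1})=\varphi(\bY^{-1})$ from a leading block, leaving behind $\beta_{2n}(\bRx,\bY,\ldots,\bRx,\bY)$-type cumulants that reassemble (via $1/\gxy$-type factors coming from \eqref{omega12g} and \eqref{subordination}) into $1/\od$ again.

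The concrete mechanism I expect: write $\beta_{2n+1}(\bY^{-1},\bRx,\bY,\ldots,\bRx,\bY)$ and use Proposition \ref{prop212} "in reverse" on the pair $\bY^{-1},\bRx$ — that is, recognize $\beta_{2n+1}(\bY^{-1},\bRx,\bY,\ldots)=\beta_{2n}(\bY^{-1}\bRx,\bY,\ldots)-\beta_1(\bY^{-1})\beta_{2n}(\bRx,\bY,\ldots)$. Wait — the orientation matters; $\bY^{-1}$ is in the first slot so I would instead peel from the right. In any case, one of the two resulting families telescopes against $A(z)$ itself. The second identity \eqref{techlemfor3} should then follow from \eqref{techlemfor2} by the \emph{same} splitting applied to the final entry: in $B(z)=\sum_{n\ge 1}\beta_{2n+1}(\bY^{-1},\bRx,\bY,\ldots,\bY,\bRx,\bY^{-1})$, apply Proposition \ref{prop212} to the last two arguments $\bRx,\bY^{-1}$ written as the product $\bRx\cdot\bY^{-1}=\bRx\bY^{-1}$, giving $\beta_{2n+2}(\ldots,\bRx,\bY^{-1})+\beta_1(\bRx)$-type corrections — or, more symmetrically, use $\bY^{-1}=\bY^{-1}\bI$ together with Proposition \ref{cor45}. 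The term $\varphi(\bY^{-2})/\od$ on the right of \eqref{techlemfor3} is the "$n=0$ boundary term" $\beta_1(\bY^{-2})\cdot(1/\od)$ that appears when the inner $\bRx,\bY,\ldots,\bY,\bRx$ chain contracts to nothing, and the $-\varphi(\bY^{-1})A(z)/\od$ term is exactly the leftover family, recognized as $\varphi(\bY^{-1})$ times $A(z)$ times the $1/\od$ that Lemma \ref{techlemma1} produces.

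The main obstacle will be bookkeeping the combinatorics of \emph{which} block-splittings survive and confirming that the surviving families are genuinely $A(z)$ (resp. $1/\od$) and not some shifted variant — in particular making sure the index ranges ($n\ge 0$ versus $n\ge 1$) and the boundary cumulants $\beta_1(\bY^{-1})$, $\beta_1(\bY^{-2})$ come out with the correct coefficients, and that all manipulations are justified by absolute convergence. Here I would lean on Remark \ref{rem1}: since $\|\bRx\|\le(|z|-\|\bX\|)^{-1}$ and $\|\beta_n\|$ grows at most like $2^{n-1}$ times the product of norms, every rearrangement of these series is legitimate for $|z|$ large enough (now also using $\|\bY^{-1}\|,\|\bY^{-2}\|<\infty$ since $\bY$ is invertible with compact spectrum bounded away from $0$), and the final closed forms, being analytic, propagate to a full neighborhood of infinity by the identity theorem. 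A secondary subtlety is that \eqref{techlemfor2} divides by $\gxy$ and $\od$, which are nonzero for large $z\in\mathbb{C}^+$ (indeed $\od\sim z$ and $\gxy\sim 1/z$), so no singularity issues arise in the stated regime.
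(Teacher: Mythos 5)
You have assembled the right toolbox (Propositions \ref{propbool}, \ref{cor45}, \ref{prop212}, Lemma \ref{techlemma1}, convergence via Remark \ref{rem1}), but the one mechanism you actually commit to does not work, and the step that makes the proof go through is only gestured at. Peeling off the leading pair via Proposition \ref{prop212} gives the correct identity
\begin{equation*}
\beta_{2n+1}(\bY^{-1},\bRx,\bY,\ldots,\bRx,\bY)=\beta_{2n}(\bY^{-1}\bRx,\bY,\ldots,\bRx,\bY)-\varphi(\bY^{-1})\,\beta_{2n}(\bRx,\bY,\ldots,\bRx,\bY),
\end{equation*}
but it is a dead end: the first term has a leading argument that is a product of elements from the two \emph{different} free algebras (none of the available tools handle such a cumulant), and the second is an even-length alternating cumulant ending in $\bY$, which is not a building block of $D(z)=\sum_n\beta_{2n+1}(\bRx,\bY,\ldots,\bY,\bRx)$, of $C(z)$, or of $A(z)$ — so there is no telescoping against $A(z)$. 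The same objection applies to your treatment of $B(z)$ via the product $\bRx\cdot\bY^{-1}$.

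The missing step is to apply formula \eqref{boolmain2} with the roles of the two algebras arranged so that the \emph{nested} family is $\bY^{-1},\bY,\ldots,\bY$ (resp.\ $\bY^{-1},\bY,\ldots,\bY,\bY^{-1}$ for $B$) and the alternating family consists of the resolvents. This collapses each term in one blow and, after resumming (justified exactly as in Remark \ref{rem1}), yields
\begin{equation*}
A(z)=\varphi(\bY^{-1})+\sum_{k\ge 1}\beta_{k+1}(\bY^{-1},\underbrace{\bY,\ldots,\bY}_{k})\,D(z)^k,\qquad
B(z)=\sum_{k\ge 1}\beta_{k+1}(\bY^{-1},\underbrace{\bY,\ldots,\bY}_{k-1},\bY^{-1})\,D(z)^k,
\end{equation*}
with $D(z)=1/\od$ by Lemma \ref{techlemma1}. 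Only \emph{then} do Propositions \ref{cor45} and \ref{prop212} enter, applied to the pure $\bY$-cumulants via $\bY^{-1}\cdot\bY=\bI$: they give $\beta_{k+1}(\bY^{-1},\bY,\ldots,\bY)=-\varphi(\bY^{-1})\beta_k(\bY,\ldots,\bY)$ for $k\ge 2$ (with the obvious low-order corrections), so that $A(z)=D(z)+\varphi(\bY^{-1})\bigl(1-\eta_\bY(D(z))\bigr)$; the factor $1/(\od\,\gxy)$ then comes from the identity $1-\eta_\bY(w)=w/G_\bY(1/w)$ together with $G_\bY(\od)=\gxy$, a computation absent from your sketch. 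Finally, your identification of the boundary term of $B$ is slightly off: the lowest-order contribution is $\beta_2(\bY^{-1},\bY^{-1})/\od=(\varphi(\bY^{-2})-\varphi(\bY^{-1})^2)/\od$, not $\varphi(\bY^{-2})/\od$; the missing $-\varphi(\bY^{-1})^2/\od$ is precisely the leading term of $-\varphi(\bY^{-1})A(z)/\od$.
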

\begin{proof} 
	We will write $\bR$ for $\bRx$ to simplify the notation. Hence
 $$A(z)=\sum_{n=0}^\infty\beta_{2n+1}(\bY^{-1},\bR,\bY,\ldots,\bR,\bY).$$
	Applying formula \eqref{boolmain2} we can see  that for $n\geq 1$ the Boolean cumulant  $\beta_{2n+1}(\bY^{-1},\bR,\ldots,\bR,\bY)$ is equal
		$$\sum_{k=1}^{n}\beta_{k+1}(\bY^{-1},\bY,\ldots,\bY)\sum_{i_1+\ldots+ i_k=n-k}\prod_{l=1}^k\beta_{2i_l+1}(\bR,\bY,\ldots,\bY,\bR).$$
		The same argument as in the previous lemma shows that for sufficiently large $z\in\mathbb{C}^+$
	\begin{equation}
	A(z)=\beta_1(\bY^{-1})+\sum_{k=1}^{\infty}\beta_{k+1}(\bY^{-1},\bY,\ldots,\bY)D(z)^k\label{wzrownaaz}
\end{equation}
	where  $D(z)=\sum_{n=0}^\infty\beta_{2n+1}(\bR,\bY,\bR,\ldots,\bY,\bR)=\frac{1}{\od}$ by Lemma \ref{techlemma1} 
	
	From Propositions \ref{cor45},  \ref{prop212} and Remark \ref{remarkboolean} we can deduce that
	$$\beta_{k+1}(\bY^{-1},\bY,\ldots,\bY)=\left\{\begin{array}{lc}
	\varphi(\bY^{-1}),&k=0\\
	1-\varphi(\bY^{-1})\beta_1(\bY),&k=1\\
	-\varphi(\bY^{-1})\beta_{k}(\bY,\bY,\ldots,\bY),& k\geq 2
	\end{array} \right..$$
	Hence $$A(z)=\varphi(\bY^{-1})+	(1-\varphi(\bY^{-1})\beta_1(\bY))D(z)-\varphi(\bY^{-1})\sum_{k=2}^{\infty}\beta_{k}(\bY,\ldots,\bY)D(z)^k$$
	$$=D(z)+\varphi(\bY^{-1})\left(1-\eta_\bY(D(z))\right).$$
	
	Now its easy to check that $1-\eta_\bY(z)=\frac{1}{\varphi\left((\bI-z\bX)^{-1}\right)}=\frac{z}{G_\bX\left(\frac{1}{z}\right)}$.
	Thus $$A(z)=\frac{1}{\od}+\frac{\varphi(\bY^{-1})}{\od G_\bY(\od)}=\frac{1}{\omega_2(z)}+\frac{\varphi(\bY^{-1})}{\omega_2(z)\gxy}.$$
 Now we can prove formula \eqref{techlemfor3}. Using formula \eqref{boolmain2} one more time one can see that

$$B(z)=\sum_{k=1}^{\infty}\beta_{k+1}(\bY^{-1},\underbrace{\bY,\ldots,\bY}_{k-1},\bY^{-1})D(z)^{k}.$$
It follows from Propositions \ref{cor45}, \ref{prop212} and the fact that Boolean cumulants are invariant under reflection (i.e. $\beta_n(\bX_1,\bX_2,\ldots,\bX_n)=\beta_n(\bX_n,\ldots,\bX_2,\bX_1)$) that
$$\beta_{k+2}(\bY^{-1},\bY,\ldots,\bY,\bY^{-1})=\left\{\begin{array}{lc}
\varphi(\bY^{-2})-\varphi(\bY^{-1})^2,&k=0\\
-\varphi(\bY^{-1})\beta_{k+1}(\bY^{-1},\bY,\ldots,\bY,\bY),& k\geq 1
\end{array} \right..$$
Consequently
 $$B(z)=\frac{\varphi(\bY^{-2})-\varphi(\bY^{-1})^2}{\od}-\frac{\varphi(\bY^{-1})}{\od}\left(\sum_{k=0}^{\infty}\beta_{k+1}(\bY^{-1},\bY,\ldots,\bY)D(z)^k-\varphi(\bY^{-1})\right).$$
The series in the above  expression is exactly $A(z)$ (formula \eqref{wzrownaaz}.) This ends the proof of the lemma.
\end{proof}
\begin{remark}
	Consider (formal) power series 
	$$
	\eta^f_\bY(z)=\sum_{k\ge 0}\,\beta_{k+1}(f(\bY),\underbrace{\bY,\ldots,\bY}_{k})\,z^k
	$$
	and
	$$
	\eta^{f,g}_\bY(z)=\sum_{k\ge 0}\,\beta_{k+2}(f(\bY),\underbrace{\bY,\ldots,\bY}_{k},g(\bY))\,z^k
	$$
	for $f,g:\mathcal A\to\mathcal A$, which seem to be important in relations between subordination and Boolean cumulants. In (\cite{szpojankowski2019conditional}, Proposition 3.4) a general and rather complicated formula expressing $\eta^f_\bY$ and $\eta^{f,g}_Y$ in terms of $\eta_Y$ was proved for $f$ and $g$ being analytic functions in the unit disc. Consequently, in a special case of  $0\le \bY<\bI$ and $f(\bY)=g(\bY)=\psi(\bY)=\bY(1-\bY)^{-1}$ explicit expressions were derived there (see the proof of (\cite{szpojankowski2019conditional}, Proposition 3.7)
	$$
	\eta_\bY^{\psi}(z)=\frac{\eta_\bY(z)-\eta_\bY(1)}{z-1}\,\varphi((1-\bY)^{-1})
	$$
	and
	$$
	\eta_\bY^{\psi,\psi}(z)=\frac{\eta_\bY(z)-\eta_\bY(1)-(z-1)\eta'_\bY(1)}{(z-1)^2}\varphi^2((1-\bY)^{-1}).
	$$
	
	It is interesting to note that in the proof of Lemma 4.3 we actually derived formulas for $\eta_\bY^h$ and $\eta_\bY^{h,h}$ for $h(\bY)=\bY^{-1}$ (which clearly is not analytic in the unit disc). Namely, the formula for $A(z)$ gives
	$$
	\eta_\bY^h(z)=z+(1-\eta_\bY(z))\varphi(\bY^{-1})
	$$
	and the formula for $B(z)$ yields
	$$
	\eta_\bY^{h,h}(z)=\varphi(\bY^{-2})-z\varphi(\bY^{-1})+\varphi^2(\bY^{-1})(\eta_\bY(z)-1).
	$$
\end{remark}	
	
\subsection{Constant regressions and their implications}\label{derequ}
From now on we assume we are given $W^*$ probability space $\left(\calA,\varphi\right)$. We also assume $\bX,\bY\in\calA$ are free, self-adjoint and positive random variables and $\bU, \bV$ are defined as follows.
 $$\bU=(\bX+\bY)^{-1},\ \ \ \ \bV=\bX^{-1}-(\bX+\bY)^{-1}.$$
 In this subsection we show that the condition of constant regression $$\varphi(\bV^{k}\mid\bU)=m_k\bI$$ in each of considered cases i.e. for $k\in\left\{-2,-1,1,2\right\}$ implies certain equation that connects the Cauchy-Stieltjes transform $G_{\bX+\bY}$ as well as subordination functions $\omega_1$ and $\omega_2$. We will consider each case separately. The most challenging was the case  $k=-2$. In all other cases subordination was enough to to get the result. In the case  $k=-2$ we additionally have to rely on Boolean cumulants.

To simplify the notation we will denote $\bT=\bX+\bY=\bU^{-1}$. Note that $\varphi\left(\cdot\mid\bT\right)=\varphi\left(\cdot\mid\bU\right)$ since we assumed $\bX, \bY$ are positive.

 We  also introduce the rational functions $q_1(t,z)$ and $q_2(t,z)$  in variable $t$ and their partial fraction decompositions
$$q_1(t,z)=\frac{1}{t(z-t)}=\frac{1}{zt}+\frac{1}{z(z-t)}$$
and
$$ q_2(t,z)=\frac{1}{t^2(z-t)}=\frac{1}{zt^2}+\frac{1}{z^2t}+\frac{1}{z^2(z-t)}$$
where $z\in \mathbb{C}^+$.
\begin{lemma}\label{lemmap1}
	Let us assume \begin{equation}\varphi\left(\bV\mid \bU\right)=c\bI, \label{reg1} \end{equation}
for some constants $c$. Then  
	\begin{equation}
\frac{1}{\omega_1(z)}\left(\varphi(\bU)+c+\gxy\right)=\left(c+\frac{1}{z}\right)\gxy+\frac{\varphi(\bU)}{z}\label{MYeq1}
	\end{equation}
		for all $z\in\mathbb{C}^{+}$.

\end{lemma}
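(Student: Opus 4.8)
The plan is to start from the regression hypothesis $\varphi(\bV\mid\bU)=c\bI$ and translate it, via the module property and faithfulness of $\varphi$, into an identity of Cauchy–Stieltjes type. Since $\bV=\bX^{-1}-\bT^{-1}=\bX^{-1}-\bU$ and conditioning on $\bU$ is the same as conditioning on $\bT$, the hypothesis reads $\varphi(\bX^{-1}\mid\bT)=(c+\bU)\bI=(c\bI+\bU)$, i.e. $\varphi(\bX^{-1}\mid\bT)=c\bI+\bT^{-1}$. The standard device (cf. the use of subordination in regression problems) is to test this against the resolvent of $\bT$: multiply both sides by $(z\bI-\bT)^{-1}$, which lies in the von Neumann algebra generated by $\bT=\bU^{-1}$, use the $\bT$-bimodule property of the conditional expectation, and apply $\varphi$. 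This gives
\begin{equation*}
\varphi\!\left(\bX^{-1}(z\bI-\bT)^{-1}\right)=\varphi\!\left((c\bI+\bT^{-1})(z\bI-\bT)^{-1}\right)=c\,\gxy+\varphi\!\left(\bT^{-1}(z\bI-\bT)^{-1}\right).
\end{equation*}
The right-hand side is elementary: using $\bT^{-1}(z\bI-\bT)^{-1}=\tfrac1z\bT^{-1}+\tfrac1z(z\bI-\bT)^{-1}$ (this is $q_1(\bT,z)$, exactly the partial-fraction decomposition introduced just before the lemma), it equals $\tfrac1z\varphi(\bU)+\tfrac1z\gxy$. So the right-hand side of the claimed identity \eqref{MYeq1}, namely $(c+\tfrac1z)\gxy+\tfrac{\varphi(\bU)}{z}$, is already accounted for; everything comes down to showing
\begin{equation*}
\varphi\!\left(\bX^{-1}(z\bI-\bT)^{-1}\right)=\frac{1}{\omega_1(z)}\left(\varphi(\bU)+c+\gxy\right).
\end{equation*}

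The heart of the argument is therefore to compute $\varphi\!\left(\bX^{-1}(z\bI-\bX-\bY)^{-1}\right)$ in terms of the subordination function $\omega_1$. Here I would use Biane's subordination relation \eqref{resolventeqation}: since $\bX^{-1}$ belongs to the algebra generated by $\bX$, the $\bX$-bimodule property gives
\begin{equation*}
\varphi\!\left(\bX^{-1}(z\bI-\bX-\bY)^{-1}\right)=\varphi\!\left(\bX^{-1}\,\varphi\!\left((z\bI-\bX-\bY)^{-1}\mid\bX\right)\right)=\varphi\!\left(\bX^{-1}(\omega_1(z)\bI-\bX)^{-1}\right).
\end{equation*}
Now apply the same partial-fraction trick with $z$ replaced by $\omega_1(z)$: $\bX^{-1}(\omega_1\bI-\bX)^{-1}=\tfrac{1}{\omega_1}\bX^{-1}+\tfrac{1}{\omega_1}(\omega_1\bI-\bX)^{-1}$, so the quantity equals $\tfrac{1}{\omega_1(z)}\big(\varphi(\bX^{-1})+\gx(\omega_1(z))\big)=\tfrac{1}{\omega_1(z)}\big(\varphi(\bX^{-1})+\gxy\big)$, using $G_\bX(\omega_1(z))=\gxy$ from \eqref{subordination}. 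Comparing with what we need, it remains to identify $\varphi(\bX^{-1})$ with $\varphi(\bU)+c$; but this is just the regression hypothesis evaluated by applying $\varphi$ to $\varphi(\bX^{-1}\mid\bT)=c\bI+\bU$, giving $\varphi(\bX^{-1})=c+\varphi(\bU)$. Chaining the two computations yields \eqref{MYeq1}.

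I expect the only genuine subtlety to be justifying the manipulations at the analytic level: the resolvent $(z\bI-\bT)^{-1}$ and the operator $\bX^{-1}$ are bounded (since $\bX,\bY$ are positive and $\bT\geq\bX>0$, so $\bX^{-1}$ exists as a possibly unbounded operator, but $\bX+\bY$ being a genuine free convolution one must be careful that $0\notin\mathrm{supp}\,\mu_\bX$), and that $\omega_1(z)$ lies outside $\mathrm{supp}\,\mu_\bX$ so that $G_\bX(\omega_1(z))$ and $(\omega_1(z)\bI-\bX)^{-1}$ make sense; all of this is standard for $z\in\mathbb{C}^+$ since $\omega_1:\mathbb{C}^+\to\mathbb{C}^+$. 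The partial-fraction identities and the bimodule property are purely formal once boundedness is in place, so the computation itself is routine; the writing effort goes into invoking \eqref{resolventeqation} and \eqref{subordination} cleanly and noting that the identity of analytic functions on $\mathbb{C}^+$ follows once it holds on an open subset.
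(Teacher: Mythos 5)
Your proposal is correct and follows essentially the same route as the paper's proof: rewrite the hypothesis as $\varphi(\bX^{-1}\mid\bT)=c\bI+\bT^{-1}$, test against the resolvent $(z\bI-\bT)^{-1}$, handle the right-hand side with the partial fraction $q_1(\bT,z)$, and compute the left-hand side via Biane's relation \eqref{resolventeqation} together with the partial fraction at $\omega_1(z)$ and the identity $G_\bX(\omega_1(z))=\gxy$, finishing with $\varphi(\bX^{-1})=c+\varphi(\bU)$. No substantive differences.
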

\begin{proof}
We start by rewriting  \eqref{reg1} as
\begin{equation}
\varphi\left(\bX^{-1}\mid \bT\right)=c\bI+\bT^{-1}.\label{phixneg1}
\end{equation}
When we multiply both sides from the right by  $(z\bI-\bT)^{-1}$ and apply  $\varphi$ we get
\begin{equation}
\varphi\left(\bX^{-1}(z\bI-\bT)^{-1}\right)=c\varphi\left((z\bI-\bT)^{-1}\right)+\varphi\left(\bT^{-1}(z\bI-\bT)^{-1}\right).\label{row1}
\end{equation}
Since $\bT^{-1}(z\bI-\bT)^{-1}=q_1(\bT,z)=\frac{1}{z}\bT^{-1}+\frac{1}{z}(z\bI-\bT)^{-1}$
the right hand side of \eqref{row1} becomes

$$\left(c+\frac{1}{z}\right)\varphi\left((z\bI-\bT)^{-1}\right)+\frac{1}{z}\varphi\left(\bT^{-1}\right)=\left(c+\frac{1}{z}\right)\gxy+\frac{\varphi(\bU)}{z}.$$
Now we deal with the left hand side of \eqref{row1}. By conditioning on $\bX$ we see that
\begin{equation}\varphi\left(\bX^{-1}(z\bI-\bT)^{-1}\right)=\varphi\left(\bX^{-1}\varphi\left(\left(z\bI-\bX-\bY\right)^{-1}\mid\bX\right)\right).\label{wzor1}
\end{equation}
Formula \eqref{resolventeqation} implies  this is equal to
\begin{equation*}
\begin{split}
\varphi\left(\bX^{-1}\left(\omega_1(z)-\bX\right)^{-1}\right)&=\varphi\left(\frac{1}{\omega_1(z)}\bX^{-1}+\frac{1}{\omega_1(z)}\left(\omega_1(z)-\bX\right)^{-1}\right)\\
&=\frac{1}{\omega_{1}(z)}\varphi(\bX^{-1})+\frac{1}{\omega_1(z)}G_{\bX}(\omega_1(z))\\
&=\frac{1}{\omega_{1}(z)}\left(\varphi(\bU)+c+\gxy\right).
\end{split}
\end{equation*}
by subordination property \eqref{subordination} and by the equality  $\varphi(\bX^{-1})=\varphi(\bU)+c$ that follows from \eqref{phixneg1}.

\end{proof}
\begin{lemma}\label{lemmam1}
	Let us assume 	that \begin{equation}\varphi\left(\bV^{-1}\mid \bU\right)=d\bI, \label{conditionD}\end{equation}
for some constant $d$. Then  
	\begin{equation}
	\frac{1}{\od}\left(\varphi(\bY^{-1})+\gxy\right)=\frac{d\varphi(\bU)}{z^2}+\frac{\varphi(\bY^{-1})}{z}+\left(\frac{d}{z^2}+\frac{1}{z}\right)\gxy\label{MYeq2}
	\end{equation}
	for all $z\in\mathbb{C}^{+}$.

	\end{lemma}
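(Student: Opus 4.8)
The plan is to mimic the proof of Lemma~\ref{lemmap1}, but now conditioning on $\bY$ (and using $\omega_2$) instead of on $\bX$. The first step is to turn the regression assumption \eqref{conditionD} into an identity for $\varphi(\bY^{-1}\mid\bT)$, where $\bT=\bX+\bY$. Writing $\bX=\bT-\bY$ one checks the purely algebraic identity
$$\bV^{-1}=\left(\bX^{-1}-\bT^{-1}\right)^{-1}=\bT\bY^{-1}\bX=\bT\bY^{-1}\bT-\bT,$$
so that, since $\bT$ lies in the von Neumann algebra on which we condition and $\varphi(\cdot\mid\bU)=\varphi(\cdot\mid\bT)$, the $\mathcal{B}$-module property of the conditional expectation turns \eqref{conditionD} into
$$\bT\,\varphi\left(\bY^{-1}\mid\bT\right)\bT-\bT=d\bI,\qquad\text{equivalently}\qquad \varphi\left(\bY^{-1}\mid\bT\right)=d\,\bT^{-2}+\bT^{-1}.$$
Applying $\varphi$ to this last identity also records the scalar relation $\varphi(\bY^{-1})=d\,\varphi(\bT^{-2})+\varphi(\bU)$, which is needed to put the final formula into the stated shape.

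Next I would multiply the operator identity $\varphi(\bY^{-1}\mid\bT)=d\,\bT^{-2}+\bT^{-1}$ from the right by $(z\bI-\bT)^{-1}$ and apply $\varphi$, obtaining
$$\varphi\left(\bY^{-1}(z\bI-\bT)^{-1}\right)=d\,\varphi\left(\bT^{-2}(z\bI-\bT)^{-1}\right)+\varphi\left(\bT^{-1}(z\bI-\bT)^{-1}\right).$$
For the right-hand side I substitute the partial fraction decompositions $\bT^{-1}(z\bI-\bT)^{-1}=q_1(\bT,z)$ and $\bT^{-2}(z\bI-\bT)^{-1}=q_2(\bT,z)$, apply $\varphi$ termwise, and then use the scalar relation above to eliminate $\varphi(\bT^{-2})$ in favour of $\varphi(\bY^{-1})$ and $\varphi(\bU)$; collecting the terms gives precisely the right-hand side of \eqref{MYeq2}.

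For the left-hand side I would instead condition on $\bY$. Since $\bY^{-1}$ lies in the algebra generated by $\bY$, the module property together with \eqref{resolventeqation} (with the roles of $\bX$ and $\bY$, hence of $\oj$ and $\od$, interchanged) gives
$$\varphi\left(\bY^{-1}(z\bI-\bT)^{-1}\right)=\varphi\left(\bY^{-1}(\omega_2(z)\bI-\bY)^{-1}\right).$$
Writing $\bY^{-1}(\omega_2(z)\bI-\bY)^{-1}=\frac{1}{\omega_2(z)}\bY^{-1}+\frac{1}{\omega_2(z)}(\omega_2(z)\bI-\bY)^{-1}$ and using $G_{\bY}(\omega_2(z))=\gxy$ from \eqref{subordination} yields $\frac{1}{\od}\left(\varphi(\bY^{-1})+\gxy\right)$, which is the left-hand side of \eqref{MYeq2}. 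Equating the two evaluations of $\varphi\left(\bY^{-1}(z\bI-\bT)^{-1}\right)$ proves the lemma for all sufficiently large $z\in\mathbb{C}^+$, and then for all $z\in\mathbb{C}^+$ by analytic continuation.

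As the remark before Subsection~\ref{derequ} already signals, this is a case where subordination alone suffices, so there is no genuine analytic obstacle here; the only point that needs care is the bookkeeping in the second paragraph, in particular remembering to use the trace of the rewritten regression identity in order to eliminate the moment $\varphi(\bT^{-2})$, which is absent from the target formula \eqref{MYeq2}.
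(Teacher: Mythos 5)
Your proposal is correct and follows essentially the same route as the paper: both reduce the regression hypothesis to the identity $\varphi(\bY^{-1}\mid\bT)=\bT^{-1}+d\,\bT^{-2}$ (you via $\bV^{-1}=\bT\bY^{-1}\bT-\bT$, the paper via $\bV^{-1}\bT^{-1}=\bX\bY^{-1}$ — a trivially equivalent manipulation), then multiply by the resolvent, handle the right side with the partial fractions $q_1,q_2$, the left side with subordination onto $\bY$, and eliminate $\varphi(\bT^{-2})$ using the trace of the conditional identity. The closing remark about large $z$ and analytic continuation is unnecessary here, since every step is valid for all $z\in\mathbb{C}^+$, but it is harmless.
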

\begin{proof}

We start by noting that $\bT\bV=(\bX+\bY)\left(\bX^{-1}-\left(\bX+\bY\right)^{-1}\right)=\bY\bX^{-1}$. This implies $\bV^{-1}\bT^{-1}=\bX\bY^{-1}$ so if we multiply both sides   of \eqref{conditionD} from the right by $\bT^{-1}$ we get
\begin{equation*}
d\bT^{-1}=\varphi\left(\bX\bY^{-1}\mid \bT\right)=\varphi\left((\bT-\bY)\bY^{-1}\mid \bT\right)=\bT\varphi(\bY^{-1}\mid\bT)-\bI.
\end{equation*}
Hence \begin{equation}
\varphi(\bY^{-1}\mid\bT)=\bT^{-1}+d\bT^{-2}.\label{drugimoment}
\end{equation}
After multiplying both sides by $(z\bI-\bT)^{-1}$ and applying $\varphi$ one can see that
\begin{equation}
\varphi\left(\bY^{-1}(z\bI-\bT)^{-1}\right)=\varphi\left(\bT^{-1}(z\bI-\bT)^{-1}\right)+d\varphi\left(\bT^{-2}(z\bI-\bT)^{-1}\right).\label{row2}
\end{equation}
The left hand side of the above expression is the same as the left hand  side of \eqref{wzor1} with $\bX$ and $\bY$ swapped so by analogy we get
$$\varphi\left(\bY^{-1}(z\bI-\bT)^{-1}\right)=	\frac{1}{\omega_2(z)}\left(\varphi(\bY^{-1})+\gxy\right).$$
Now we calculate the right hand side of \eqref{row2}. 
$$\varphi\left(\bT^{-1}(z\bI-\bT)^{-1}\right)=\varphi\left(q_1(\bT,z)\right)=\frac{\varphi(\bT^{-1})}{z}+\frac{1}{z}\gxy.$$
Similarly
$$\varphi\left(\bT^{-2}(z\bI-\bT)^{-1}\right)=\varphi(q_2(\bT,z))=\frac{1}{z}\varphi(\bT^{-2})+\frac{1}{z^2}\varphi(\bT^{-1})+\frac{1}{z^2}\gxy.$$
Consequently  the left hand side of \eqref{row2} is equal
$$\frac{\varphi(\bU)}{z}+\frac{1}{z}\gxy+\frac{d}{z}\varphi(\bT^{-2})+\frac{d\varphi(\bU)}{z^2}+\frac{d}{z^2}\gxy.$$
This is exactly the right hand side of \eqref{MYeq2} as \eqref{drugimoment} implies that $$\varphi(\bY^{-1})=\varphi(\bU)+d\varphi(\bT^{-2}).$$
\end{proof}
\begin{lemma}\label{lemmap2}
	Let us assume that \begin{equation}\varphi\left(\bV^2\mid \bU\right)=b\bI, \label{reg4} \end{equation}
	for some constant $b$. Then  
	\begin{multline}
\frac{\varphi(\bX^{-2})}{\oj}+\frac{1}{\oj}\left(\varphi(\bX^{-1})+\gxy\right)\left(\frac{1}{\oj}-\frac{2}{z}\right)\\
=\frac{\varphi(\bX^{-2})-b}{z}-\frac{\varphi(\bU)}{z^2}+\left(b-\frac{1}{z^2}\right)\gxy
	\end{multline}
	for all $z\in\mathbb{C}^+$.
\end{lemma}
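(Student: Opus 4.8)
The plan is to compute $\varphi\bigl(\bX^{-2}(z\bI-\bT)^{-1}\bigr)$ in two different ways, exactly in the spirit of the proofs of Lemmas \ref{lemmap1} and \ref{lemmam1}, where as before $\bT=\bX+\bY=\bU^{-1}$ and $\varphi(\cdot\mid\bT)=\varphi(\cdot\mid\bU)$. First I would translate \eqref{reg4} into a statement about conditional expectations of powers of $\bX^{-1}$. Since $\bV=\bX^{-1}-\bT^{-1}$, expanding $\bV^{2}=\bX^{-2}-\bX^{-1}\bT^{-1}-\bT^{-1}\bX^{-1}+\bT^{-2}$ and using the $\bT$-bimodule property of $\varphi(\cdot\mid\bT)$ gives
\begin{equation*}
\varphi\bigl(\bX^{-2}\mid\bT\bigr)=b\bI-\bT^{-2}+\varphi\bigl(\bX^{-1}\mid\bT\bigr)\bT^{-1}+\bT^{-1}\varphi\bigl(\bX^{-1}\mid\bT\bigr).
\end{equation*}
Applying $\varphi$ to this and using traciality together with $\varphi\circ\varphi(\cdot\mid\bT)=\varphi$ yields the scalar identity $2\varphi(\bX^{-1}\bT^{-1})=\varphi(\bX^{-2})-b+\varphi(\bT^{-2})$, which will later be used to eliminate the auxiliary moment $\varphi(\bX^{-1}\bT^{-1})$.

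For the first evaluation of $\varphi\bigl(\bX^{-2}(z\bI-\bT)^{-1}\bigr)$ I would condition on $\bX$ and apply \eqref{resolventeqation}, so that $\varphi\bigl(\bX^{-2}(z\bI-\bT)^{-1}\bigr)=\varphi\bigl(\bX^{-2}(\oj-\bX)^{-1}\bigr)$. The partial fraction decomposition $\tfrac{1}{x^{2}(w-x)}=\tfrac{1}{w\,x^{2}}+\tfrac{1}{w^{2}x}+\tfrac{1}{w^{2}(w-x)}$, together with $G_\bX(\oj)=\gxy$ from \eqref{subordination}, gives
\begin{equation*}
\varphi\bigl(\bX^{-2}(z\bI-\bT)^{-1}\bigr)=\frac{\varphi(\bX^{-2})}{\oj}+\frac{1}{\oj^{2}}\bigl(\varphi(\bX^{-1})+\gxy\bigr),
\end{equation*}
which is precisely the left-hand side of the asserted identity.

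For the second evaluation I would instead condition on $\bT$ and substitute the formula for $\varphi(\bX^{-2}\mid\bT)$ from the first paragraph. The summand $b\bI$ contributes $b\gxy$; the summand $-\bT^{-2}$ contributes $-\varphi\bigl(q_2(\bT,z)\bigr)$; and, by cyclicity of $\varphi$, each of the two terms involving $\varphi(\bX^{-1}\mid\bT)$ contributes $\varphi\bigl(\bX^{-1}\,q_1(\bT,z)\bigr)$, because $\bT^{-1}(z\bI-\bT)^{-1}=q_1(\bT,z)$ is a function of $\bT$ and $\varphi\bigl(\varphi(\bX^{-1}\mid\bT)\,g(\bT)\bigr)=\varphi\bigl(\bX^{-1}g(\bT)\bigr)$. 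Expanding $q_1$ and using $\varphi\bigl(\bX^{-1}(z\bI-\bT)^{-1}\bigr)=\oj^{-1}\bigl(\varphi(\bX^{-1})+\gxy\bigr)$ --- an elementary consequence of \eqref{resolventeqation} and \eqref{subordination} that already appears in the proof of Lemma \ref{lemmap1}, before the substitution $\varphi(\bX^{-1})=\varphi(\bU)+c$, and which does not use any regression assumption --- reduces everything to $\varphi(\bX^{-1}\bT^{-1})$, $\oj$, $\gxy$, $\varphi(\bU)$ and $z$. Invoking the scalar identity of the first paragraph one checks that the two occurrences of $\varphi(\bT^{-2})$ cancel, and one arrives at
\begin{equation*}
\varphi\bigl(\bX^{-2}(z\bI-\bT)^{-1}\bigr)=\Bigl(b-\frac{1}{z^{2}}\Bigr)\gxy-\frac{\varphi(\bU)}{z^{2}}+\frac{\varphi(\bX^{-2})-b}{z}+\frac{2}{z\,\oj}\bigl(\varphi(\bX^{-1})+\gxy\bigr).
\end{equation*}
Equating the two expressions and moving the last term to the left-hand side, then using $\tfrac{1}{\oj^{2}}-\tfrac{2}{z\oj}=\tfrac{1}{\oj}\bigl(\tfrac{1}{\oj}-\tfrac{2}{z}\bigr)$, yields exactly the claimed formula, first for large $z\in\mathbb{C}^{+}$ and then for all $z\in\mathbb{C}^{+}$ by analytic continuation.

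The only real subtlety --- rather than a genuine obstacle --- is the non-commutative bookkeeping: $\varphi(\bX^{-1}\mid\bT)$ does not commute with $\bT$, so one must use cyclicity of $\varphi$ and the bimodule property carefully when reducing traces of the form $\varphi\bigl(\varphi(\bX^{-1}\mid\bT)\,g(\bT)\bigr)$ to $\varphi\bigl(\bX^{-1}g(\bT)\bigr)$. Once that is handled, the whole argument rests only on \eqref{resolventeqation}, \eqref{subordination} and the partial fraction identities for $q_1$ and $q_2$; in contrast to the case $k=-2$, no Boolean cumulants are required here.
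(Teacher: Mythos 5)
Your proposal is correct and follows essentially the same route as the paper: expand $\bV^{2}$, use the bimodule property and traciality to reduce to $\varphi\left(\bX^{-2}(z\bI-\bT)^{-1}\right)$, $\varphi\left(\bX^{-1}q_1(\bT,z)\right)$ and $\varphi\left(q_2(\bT,z)\right)$, evaluate the $\bX^{-1},\bX^{-2}$ terms via \eqref{resolventeqation}, \eqref{subordination} and partial fractions, and eliminate $\varphi(\bX^{-1}\bT^{-1})$ by the scalar identity $2\varphi(\bX^{-1}\bT^{-1})=\varphi(\bX^{-2})+\varphi(\bU^{2})-b$. Your explicit handling of the non-commutativity of $\varphi(\bX^{-1}\mid\bT)$ with $\bT^{-1}$ via cyclicity is a slightly more careful rendering of a step the paper passes over with ``or equivalently,'' but the argument is the same.
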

\begin{proof}
We start by expanding
 $\bV^2=\left(\bX^{-1}-\bT^{-1}\right)^2=\bX^{-2}-\bX^{-1}\bT^{-1}-\bT^{-1}\bX^{-1}+\bT^{-2}$. The condition \eqref{reg4} implies now
\begin{equation*}
\varphi\left(\bX^{-2}\mid\bT\right)-\bT^{-1}\varphi\left(\bX^{-1}\mid\bT\right)-\varphi\left(\bX^{-1}\mid\bT\right)\bT^{-1}+\bT^{-2}=b\bI
\end{equation*}
or equivalently
\begin{equation*}
\varphi\left(\bX^{-2}\mid\bT\right)-2\varphi\left(\bX^{-1}\mid\bT\right)\bT^{-1}=b\bI-\bT^{-2}.\label{l3eq}
\end{equation*}
If we multiply both sides from the right  by the resolvent $\left(z\bI-\bT\right)^{-1}$ and apply $\varphi$ we get
\begin{multline}\label{wzor2}
\varphi\left(\bX^{-2}(z\bI-\bT)^{-1}\right)-2\varphi\left(\bX^{-1}\bT^{-1}(z\bI-\bT)^{-1}\right)\\
=b\varphi\left((z\bI-\bT)^{-1}\right)-\varphi\left(\bT^{-2}(z\bI-\bT)^{-1}\right).
\end{multline}
The right hand side of \eqref{wzor2} is equal to 
$$b\gxy-\varphi\left(q_2(\bT,z)\right)
=b\gxy-\frac{\varphi(\bU^2)}{z}-\frac{\varphi(\bU)}{z^2}-\frac{1}{z^2}\gxy.$$
Now we evaluate the left hand side of \eqref{wzor2}. Note that
\begin{equation}
\varphi\left(\bX^{-2}(z\bI-\bT)^{-1}\right)=\varphi\left(\bX^{-2}\varphi\left(z\bI-\bX-\bY)^{-1}\mid\bX\right)\right)
\end{equation}
Using \eqref{resolventeqation} we see that the last expression is equal
\begin{equation}
\begin{split}
\varphi\left(\bX^{-2}(w_1(z)-\bX)^{-1}\right)&=\varphi\left(q_2(\bX,\omega_1(z))\right)\\
&=\frac{1}{\omega_1(z)}\varphi(\bX^{-2})+\frac{1}{\omega_1^2(z)}\varphi(\bX^{-1})+\frac{1}{\omega_1^2(z)}\varphi\left((\omega_1(z)-\bX)^{-1}\right)\\
&=\frac{1}{\omega_1(z)}\varphi(\bX^{-2})+\frac{1}{\omega_1^2(z)}\left(\varphi(\bX^{-1})+\gxy\right)
\end{split}
\end{equation}
Similarly we have 
\begin{equation}
\begin{split}
\varphi\left(\bX^{-1}\bT^{-1}(z\bI-\bT)^{-1}\right)&=\varphi\left(\bX^{-1}\cdot q_1(\bT,z)\right)\\
&=\frac{1}{z}\varphi\left(\bX^{-1}\bT^{-1}\right)+\frac{1}{z}\varphi\left(\bX^{-1}(z\bI-\bT)^{-1}\right)\\
&=\frac{1}{z}\varphi\left(\bX^{-1}\bT^{-1}\right)+\frac{1}{z\oj}\left(\varphi(\bX^{-1})+\gxy\right).
\end{split}
\end{equation}
The result follows now by simple algebra and on noting that \eqref{l3eq} yields
$$2\varphi\left(\bX^{-1}\bT^{-1}\right)=\varphi\left(\bX^{-2}\right)+\varphi\left(\bU^2\right)-b.$$

\end{proof}
\begin{lemma}\label{lemmam2}
	Let us assume that \begin{equation}\varphi\left(\bV^{-2}\mid \bU\right)=h\bI, \label{reg5} \end{equation}
	for some constant $h$.
Then  
	\begin{multline}
	\varphi(\bX^2)B(z)+A(z)^2\left(\omega_1^2(z)\gxy-\omega_1(z)-\varphi(\bX)\right)=\\
	=h\left(\frac{\varphi(\bU^2)}{z}+\frac{\varphi(\bU)}{z^2}+\frac{1}{z^2}\gxy\right)\label{MYeq4}
	\end{multline}
	for all $z\in\mathbb{C}^+$,
	where
	$$A(z)=\frac{1}{\omega_2(z)}+\frac{\varphi(\bY^{-1})}{\omega_2(z)\gxy},$$
	$$B(z)=\frac{\varphi(\bY^{-2})-\varphi(\bY^{-1}) A(z)}{\omega_2(z)}=\frac{\varphi(\bY^{-2})}{\omega_2(z)}-\frac{\varphi(\bY^{-1})}{\omega_2^2(z)}-\frac{\varphi(\bY^{-1})^2}{\omega_2^2(z)\gxy}.$$

\end{lemma}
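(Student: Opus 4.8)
The plan is to follow the route of Lemmas~\ref{lemmap1}--\ref{lemmap2}: rewrite the regression \eqref{reg5} as a statement about a conditional expectation given $\bT=\bX+\bY$, multiply by a resolvent of $\bT$, apply $\varphi$, read off the right-hand side from partial fractions, and evaluate the left-hand side via Boolean cumulants. I would begin by computing $\bV\bT=\bX^{-1}\bY$ (exactly as $\bT\bV=\bY\bX^{-1}$ is obtained in the proof of Lemma~\ref{lemmam1}); this gives $\bV^{-1}=\bX\bY^{-1}\bT=\bT\bY^{-1}\bX$ and hence $\bV^{-2}=\bT\,\bY^{-1}\bX^2\bY^{-1}\,\bT$. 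Since $\bT^{\pm1}$ lies in the algebra generated by $\bU$ and $\varphi(\cdot\mid\bU)=\varphi(\cdot\mid\bT)$ is a $\bT$-bimodule map, \eqref{reg5} is equivalent to $\varphi\!\left(\bY^{-1}\bX^2\bY^{-1}\mid\bT\right)=h\,\bT^{-2}$. Multiplying on the right by $(z\bI-\bT)^{-1}$ and applying $\varphi$, the right-hand side turns into $h\,\varphi\!\left(\bT^{-2}(z\bI-\bT)^{-1}\right)=h\,\varphi(q_2(\bT,z))=h\big(\tfrac{\varphi(\bU^2)}{z}+\tfrac{\varphi(\bU)}{z^2}+\tfrac{1}{z^2}\gxy\big)$, which is exactly the right-hand side of \eqref{MYeq4}. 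So the lemma reduces to evaluating $F(z):=\varphi\!\left(\bY^{-1}\bX^2\bY^{-1}(z\bI-\bT)^{-1}\right)$.

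Next I would record the $\bX$-side identity that makes the stated form appear: writing $\bX=\oj\bI-(\oj\bI-\bX)$, expanding the square, and using $G_\bX(\oj)=\gxy$ from \eqref{subordination}, one gets $\varphi\!\left(\bX^2(\oj\bI-\bX)^{-1}\right)=\omega_1^2(z)\gxy-\omega_1(z)-\varphi(\bX)$. Thus the goal becomes
$$F(z)=\varphi(\bX^2)\,B(z)+A(z)^2\,\varphi\!\left(\bX^2(\oj\bI-\bX)^{-1}\right).$$
As a consistency check, the same identity with $\bX^2$ replaced by $\bI$ reads $\varphi\!\left(\bY^{-2}(z\bI-\bT)^{-1}\right)=B(z)+A(z)^2\gxy$, and that one verifies at once by conditioning on $\bY$, expanding $\varphi(q_2(\bY,\od))$, and inserting the explicit forms of $A$ and $B$ from Lemma~\ref{techlemma2}.

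To evaluate $F(z)$ I would expand $(z\bI-\bT)^{-1}=\sum_{n\ge0}(\bR\bY)^n\bR$ with $\bR=\bRx$ (convergent, and manipulable term by term, for large $|z|$ by the estimates in Remark~\ref{rem1}), and use traciality to write $F(z)=\sum_{n\ge0}\varphi\!\left(\bX^2\,\bY^{-1}(\bR\bY)^n\bR\,\bY^{-1}\right)$; each summand is $\varphi$ of a freely alternating word $\bX_1\bY_1\cdots\bX_{n+2}\bY_{n+2}$ with $\bX_1=\bX^2$, $\bX_i=\bR$ for $i\ge2$, $\bY_1=\bY_{n+2}=\bY^{-1}$ and $\bY_i=\bY$ otherwise, so the expansion formula \eqref{boolmain1} of Proposition~\ref{propbool} applies. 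I would then interchange the order of the resulting summations in the same manner as in the proofs of Lemmas~\ref{techlemma1} and \ref{techlemma2}: the homogeneous $\bX$-blocks $\beta_{2i+1}(\bR,\bY,\ldots,\bR)$ resum into $D(z)=1/\od$, and the leftmost $\bX$-block is split off according to whether it reduces to $\beta_1(\bX^2)=\varphi(\bX^2)$ — in which case both copies of $\bY^{-1}$ fall into the scalar factor $\varphi(\bY_{j_1}\cdots\bY_{j_{k+1}})$ and summing produces the $\varphi(\bX^2)B(z)$ term via the computation of $B$ in Lemma~\ref{techlemma2} — or to a longer block $\beta_{2j-1}(\bX^2,\bY^{-1},\bR,\ldots,\bR)$, which, after peeling off the factor $\bX^2$ with Proposition~\ref{prop212} and resumming the two surviving chains, produces the $A(z)^2\,\varphi\!\left(\bX^2(\oj\bI-\bX)^{-1}\right)$ term. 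The main obstacle is exactly this regrouping: one must keep careful track of how the position of $\bX^2$ and of the two copies of $\bY^{-1}$ interleave with the nested sums and then match the emerging combinations of $D(z)$, $\eta_\bY$, $A(z)$ and $B(z)$ against the two target terms; the remaining steps are the kind of bounded manipulation already performed in Section~\ref{technicalresults}. Finally, since $F(z)$ and the right-hand side are both analytic on $\mathbb{C}^+$, the identity, once established near infinity, extends to all of $\mathbb{C}^+$.
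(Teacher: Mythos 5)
Your reduction of \eqref{reg5} to $\varphi\left(\bY^{-1}\bX^{2}\bY^{-1}\mid\bT\right)=h\bT^{-2}$, the evaluation of the right-hand side via $q_2(\bT,z)$, the identity $\varphi\left(\bX^2(\oj\bI-\bX)^{-1}\right)=\omega_1^2(z)\gxy-\oj-\varphi(\bX)$, and the overall plan (expand $(z\bI-\bT)^{-1}$ in powers of $\bRx\bY$, apply \eqref{boolmain1}, resum) all coincide with the paper's proof. The gap is precisely in the step you yourself flag as the main obstacle: the regrouping does not come out as you claim for the cyclic rotation you chose. You rotate each term so that $\bX^2$ stands first and the word ends in $\bY^{-1}$, which makes the $\bX$-algebra elements the outer collection in \eqref{boolmain1}, so the scalar factor $\varphi(\bY_{j_1}\cdots\bY_{j_{k+1}})$ is a \emph{moment} of powers of $\bY$. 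In the case where the leftmost block degenerates to $\beta_1(\bX^2)$, the surviving blocks are pure chains resumming to $D(z)^k$ and the contribution is $\varphi(\bX^2)\sum_{k\geq1}\varphi\left(\bY^{-1}\bY^{k-1}\bY^{-1}\right)D(z)^k$; since $\varphi(\bY^{-1}\bY^{k-1}\bY^{-1})=\varphi(\bY^{k-3})$ is a moment rather than the Boolean cumulant $\beta_{k+1}(\bY^{-1},\bY,\ldots,\bY,\bY^{-1})$ appearing in the expansion of $B(z)$, this is \emph{not} $\varphi(\bX^2)B(z)$ (already at $k=1$ you get $\varphi(\bY^{-2})$ instead of $\varphi(\bY^{-2})-\varphi(\bY^{-1})^2$, and a spurious $D(z)^3$-tail appears). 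Likewise, Proposition \ref{prop212} applied to $\beta_{2j-1}(\bX^2,\bY^{-1},\ldots)$ only splits $\bX\cdot\bX$ into $\beta_{2j}(\bX,\bX,\bY^{-1},\ldots)+\varphi(\bX)\beta_{2j-1}(\bX,\bY^{-1},\ldots)$, which does not lead to the term $A(z)^2\varphi\left(\bX^2(\oj\bI-\bX)^{-1}\right)$.

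The paper makes the opposite choice of rotation: each term is written, by traciality, as $\varphi\left(\bY^{-1}\left[\bRx\bY\right]^{n-1}\bRx\,\bY^{-1}\bX^2\right)$, so that the outer collection in \eqref{boolmain1} is $\{\bY^{-1},\bY,\ldots,\bY,\bY^{-1}\}$ and the scalar factor is $\varphi\left((z\bI-\bX)^{-k}\bX^2\right)$. With this choice the $k=0$ term is exactly $\varphi(\bX^2)\beta_{2n+1}(\bY^{-1},\bRx,\bY,\ldots,\bY,\bRx,\bY^{-1})$, summing to $\varphi(\bX^2)B(z)$ by definition; for $k\geq1$ the two copies of $\bY^{-1}$ land at the ends of the first and last cumulant chains, which resum to $A(z)$ and (by reflection invariance of Boolean cumulants) again to $A(z)$, the interior chains give $C(z)^{k-1}$ with $C(z)=z-\oj$, and $\sum_{k\geq1}\varphi\left((z\bI-\bX)^{-k}\bX^2\right)C(z)^{k-1}=\varphi\left(\bX^2(\oj\bI-\bX)^{-1}\right)$. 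So your architecture is right, but the specific combinatorial claims made for your rotation are false, and repairing them amounts to switching to the paper's rotation; as written the proposal does not establish \eqref{MYeq4}.
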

\begin{proof}
As in the previous lemmas let us denote $\bT=\bX+\bY=\bU^{-1}$. Since $\bT\bV=\bY\bX^{-1}$ and $\bV\bT=\bX^{-1}\bY$ we see that $\bT^{-1}\bV^{-2}\bT^{-1}=\bY^{-1}\bX^{2}\bY^{-1}$. Hence, after multiplying \eqref{reg5} by $\bT^{-1}$ from both sides, we get
\begin{equation}
\varphi\left(\bY^{-1}\bX^{2}\bY^{-1}\mid\bU\right)=h\bT^{-2}.\label{lm2eq1}
\end{equation}
Let us multiply \eqref{lm2eq1} by $(z\bI-\bT)^{-1}$ from the right and apply $\varphi$ to obtain
\begin{equation*}
\varphi\left(\bY^{-1}\bX^{2}\bY^{-1}(z\bI-\bT)^{-1}\right)=h\varphi\left(\bT^{-2}(z\bI-\bT)^{-1}\right).
\end{equation*}
The right hand is equal to $$h\varphi(q_2(\bT,z))=h\left(\frac{1}{z}\varphi(\bT^{-2})+\frac{1}{z^2}\varphi(\bT^{-1})+\frac{1}{z^2}\gxy\right),$$ which is exactly the righ hand side of \eqref{MYeq4}.
\\To calculate the left hand side we observe  that 
 $$(z\bI-\bT)^{-1}=(z\bI-\bX-\bY)^{-1}=\left(\bI-\left(z\bI-\bX\right)^{-1}\bY\right)^{-1}\left(z\bI-\bX\right)^{-1}.$$
 When $|z|>||\bX||+||\bY||$ we can write
 $$(z\bI-\bT)^{-1}=\sum_{n=1}^{\infty}\left[(z\bI-\bX)^{-1}\bY\right]^{n-1}(z\bI-\bX)^{-1}.$$

Using the above expansion and traciality of $\varphi$ we see that

\begin{multline}
\varphi\left(\bY^{-1}\bX^{2}\bY^{-1}(z\bI-\bT)^{-1}\right)=\sum_{n=1}^{\infty}\varphi\left(\bY^{-1}\bX^{2}\bY^{-1}\left[\bR\bY\right]^{n-1}\bR\right)\\
=\sum_{n=1}^{\infty}\varphi\left(\bY^{-1}\left[\bR\bY\right]^{n-1}\bR\bY^{-1}\bX^2\right),
\end{multline}
 where $\bR=(z\bI-\bX)^{-1}$.
 
Formula \eqref{boolmain1} for the collections $\{\underbrace{\bY^{-1},\bY,\ldots,\bY,\bY^{-1}}_{n+1}\}$ and $\{\underbrace{\bR,\bR,\ldots,\bR,\bX^2}_{n+1}\}$ implies that
\begin{multline*}
\varphi\left(\bY^{-1}\left[\bR\bY\right]^{n-1}\bR\bY^{-1}\bX^2\right)=\varphi(\bX^2)\beta_{2n+1}(\bY^{-1},\bR,\bY,\ldots,\bY,\bR,\bY^{-1})+\\
\sum_{k=1}^n\varphi\left(\bR^k\bX^2\right)\sum_{i_1+\ldots+i_{k+1}=n-k}\beta_{2i_1+1}(\bY^{-1},\bR,\ldots,\bR,\bY)\cdot\ldots \cdot\beta_{2i_{k+1}+1}(\bY,\bR,\ldots,\bR,\bY^{-1}).
\end{multline*}

Hence we get
\begin{multline*}
\varphi\left(\bY^{-1}\bX^{2}\bY^{-1}(z\bI-\bT)^{-1}\right)=\varphi(\bX^2)\sum_{n=1}^{\infty}\beta_{2n+1}(\bY^{-1},\bR,\bY,\ldots,\bY,\bR,\bY^{-1})+\\
+\varphi\left(\sum_{n=1}^\infty\sum_{k=1}^n\left(\sum_{i_1+\ldots+i_{k+1}=n-k}\beta_{2i_1+1}(\bY^{-1},\bR,\ldots,\bR,\bY)\cdot\ldots \cdot\beta_{2i_{k+1}+1}(\bY,\bR,\ldots,\bR,\bY^{-1})\right)\bR^k\bX^2\right).
\end{multline*}
The inner expression is equal to 
\begin{multline*}
\bR\bX^2\sum_{k=1}^\infty\sum_{n=k}^\infty\left(\sum_{i_1+\ldots+i_{k+1}=n-k}\beta_{2i_1+1}(\bY^{-1},\bR,\ldots,\bR,\bY)\cdot\ldots \cdot\beta_{2i_{k+1}+1}(\bY,\bR,\ldots,\bR,\bY^{-1})\right)\bR^{k-1}\\
=A(z)\tilde{A}(z)\bR\bX^2\sum_{k=1}^\infty\left[C(z)\bR\right]^{k-1}=A(z)\tilde{A}(z)\bR\bX^2\left(\bI-C(z)\bR\right)^{-1},
\end{multline*}
where we denoted
$$A(z)=\sum_{n=0}^\infty\beta_{2n+1}(\bY^{-1},\bR,\ldots,\bR,\bY),$$
$$\tilde{A}(z)=\sum_{n=0}^\infty\beta_{2n+1}(\bY,\bR,\ldots,\bR,\bY^{-1})$$
and $$C(z)=\sum_{n=0}^\infty\beta_{2n+1}(\bY,\bR,\ldots,\bR,\bY).$$
Let us additionally denote $$B(z)=\sum_{n=1}^\infty\beta_{2n+1}(\bY^{-1},\bR,\bY,\ldots,\bY,\bR,\bY^{-1}).$$
(Note that each of the above series is convergent for large $z$ by  argument from  Remark \ref{rem1}.)

So far we  established that
\begin{equation*}
\varphi\left(\bY^{-1}\bX^{2}\bY^{-1}(z\bI-\bT)^{-1}\right)=B(z)\varphi\left(\bX^2\right)+A(z)\tilde{A}(z)\varphi\left(\bR\bX^2\left(\bI-C(z)\bR\right)^{-1}\right).
\end{equation*}
Since Boolean cumulants are invariant with respect to reflection we see that $\tilde{A}(z)=A(z)$. Moreover Lemma \ref{techlemfor2} shows that $A(z)$ and $B(z)$ have desired forms.

The remaining objective is to calculate $\varphi\left(\bR\bX^2\left(\bI-C(z)\bR\right)^{-1}\right)$. From  formula \eqref{cofz} we know that $C(z)=z-\oj$. Consequently

 \begin{multline*}
\bR\bX^2\left(\bI-C(z)\bR\right)^{-1}=\bX^2\left(\bR^{-1}-C(z)\bI\right)^{-1}=\bX^2\left(\omega_1(z)\bI-\bX\right)^{-1}\\
=-\omega_1(z)\bI-\bX+\omega_1^2(z)(\omega_1(z)\bI-\bX)^{-1}.
 \end{multline*}
Thus referring again to \eqref{subordination} we get
$$\varphi\left(\bR\bX^2\left(\bI-C(z)\bR\right)^{-1}\right)=\omega_1^2(z)\gxy-\omega_1(z)-\varphi(\bX).$$
This proves the result for all large enough in $z\in\mathbb{C}^+$. Since both sides of \eqref{MYeq4} are analytic functions \eqref{MYeq4} holds for all $z\in\mathbb{C}^+$.

\end{proof}

\section{Characterization theorems}\label{maintheorems}
The aim of this section is to prove regression characterizations  which are our main results. We will deal with  each  case $(k,l)=(1,-1),(1,2),(-1,-2)$ as given in \eqref{jedenjeden} separately. The proof of each case will be broken into the series of lemmas and corollaries.
 We will start with the following case.
\subsection{The case $(k,l)=(1,-1)$}
\begin{theorem}\label{th1}
Let $\bX$ and  $ \bY$ be  free, positive, self-adjoint random variables.
 Let us  define $\bU=\left(\bX+\bY\right)^{-1}$ and  $\bV=\bX^{-1}-\left(\bX+\bY\right)^{-1}$. If the following conditions are satisfied 
 \begin{equation*}\varphi\left(\bV\mid \bU\right)=c\bI,  \end{equation*}
  \begin{equation*}\varphi\left(\bV^{-1}\mid \bU\right)=d\bI,\end{equation*}
  for some constants $c$ and $d$, then $cd>1$ and $\bX$ has the free-GIG distribution   $\mu\left(-\frac{cd}{cd-1},\frac{\gamma}{cd-1},\frac{d}{cd-1}\right)$ and $\bY$ has the free Poisson distribution $\nu\left(\frac{cd}{cd-1},\frac{cd-1}{\gamma}\right)$, where $\gamma$ is some  positive constant.
\end{theorem}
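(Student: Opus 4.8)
The plan is to feed the two regression identities of Lemmas~\ref{lemmap1} and~\ref{lemmam1} through the change of variable $u=\gxy$: this turns the subordination functions $\oj,\od$ into inverse Cauchy transforms, hence into $r$-transforms, after which the relations become algebraic in $\rx$ and $\ry$ and can be recognised as the defining equations of a free-Poisson law (for $\bY$) and a free-GIG law (for $\bX$). As a preliminary, note that $\bX\le\bX+\bY$ forces $\bV=\bX^{-1}-(\bX+\bY)^{-1}=\bX^{-1}\bY(\bX+\bY)^{-1}\ge 0$, and since $\bV^{-1}$ occurs in the hypotheses $\bV$ — hence also $\bY=\bX\bV(\bX+\bY)$ — is invertible; applying $\varphi$ to the two conditions gives $c=\varphi(\bV)>0$, $d=\varphi(\bV^{-1})>0$, and Cauchy--Schwarz for the tracial state yields $1=\bigl(\varphi(\bV^{1/2}\bV^{-1/2})\bigr)^{2}\le\varphi(\bV)\,\varphi(\bV^{-1})=cd$, with equality only if $\bV\in\mathbb{C}\bI$, impossible for free non-degenerate $\bX,\bY$; so $cd>1$. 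I abbreviate $p:=\varphi(\bU)$, $q:=\varphi(\bY^{-1})$, recall $\varphi(\bX^{-1})=p+c$ from \eqref{phixneg1}, and anticipate that the free parameter $\gamma$ of the statement equals $q$.

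Next I apply Lemmas~\ref{lemmap1} and~\ref{lemmam1} to get \eqref{MYeq1} and \eqref{MYeq2}, which relate $\oj,\od$ and $G:=\gxy$, and set $u=\gxy$. By \eqref{subordination}, $\oj=G_\bX^{-1}(u)$ and $\od=G_\bY^{-1}(u)$ for $z$ near infinity in $\mathbb{C}^+$; by the definition of the $r$-transform $G_\bX^{-1}(u)=u^{-1}+\rx$, $G_\bY^{-1}(u)=u^{-1}+\ry$, and, using \eqref{rxpy}, $z=u^{-1}+\rx+\ry$. Substituting these into \eqref{MYeq1} and \eqref{MYeq2} converts them into two rational identities in $u$ involving $\rx=\rx(u)$, $\ry=\ry(u)$ and the constants $c,d,p,q$. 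From the transformed \eqref{MYeq1} I solve for $\ry$ as a rational function of $\rx$; combined with the transformed \eqref{MYeq2} this should produce, after simplification, both the closed form
$$\ry(u)=\frac{cd}{q-(cd-1)u}$$
and a quadratic equation for $\rx(u)$. The first expression is exactly the $r$-transform of $\nu\!\left(\tfrac{cd}{cd-1},\tfrac{cd-1}{q}\right)$ — and it is $cd>1$, together with $q>0$, that makes $\tfrac{cd-1}{q}$ a legitimate free-Poisson parameter — so, the $r$-transform determining the law, $\bY\sim\nu\!\left(\tfrac{cd}{cd-1},\tfrac{cd-1}{\gamma}\right)$ with $\gamma=q$.

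For $\bX$, I substitute $\rx(u)=G_\bX^{-1}(u)-u^{-1}$ into the quadratic for $\rx$ and clear denominators; this should reduce to an identity of the form $z^2G_\bX^2-(\alpha z^2-(\lambda_0-1)z-\beta)G_\bX+\alpha z+\delta=0$ with $\lambda_0=-\tfrac{cd}{cd-1}$, $\alpha=\tfrac{q}{cd-1}$, $\beta=\tfrac{d}{cd-1}$ and $\delta=\tfrac{d(p+c)}{cd-1}=\tfrac{d\,\varphi(\bX^{-1})}{cd-1}$, where $\alpha,\beta,\delta>0$ because $cd>1$. Lemma~\ref{giglemma} then gives $\bX\sim\mu\!\left(-\tfrac{cd}{cd-1},\tfrac{\gamma}{cd-1},\tfrac{d}{cd-1}\right)$, $\gamma=q$, which is the assertion. (Consistency: these are precisely the parameters $\bX\sim\mu(-\lambda_1,\alpha,\beta)$, $\bY\sim\nu(\lambda_1,1/\alpha)$ with $\lambda_1=\tfrac{cd}{cd-1}$ of Lemma~\ref{szpolemm}, and $\rx+\ry$ indeed reproduces the free-GIG $r$-transform of $\bX+\bY$.)

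The main obstacle is the algebraic elimination of the third paragraph: substituting the formulas for $\oj,\od$ into \eqref{MYeq1}, \eqref{MYeq2} and performing the two eliminations carefully enough that the correct quadratic for $\rx$ — with the right constant term — drops out, then matching it to the precise normalisation of Lemma~\ref{giglemma}, which is also the step where $\alpha,\beta,\delta>0$, and hence $cd>1$, is genuinely used. Everything before that — the change of variable $u=\gxy$ and the identities $\oj=G_\bX^{-1}(u)=u^{-1}+\rx$, $\od=G_\bY^{-1}(u)=u^{-1}+\ry$ — is exactly where subordination carries the argument, and no Boolean cumulants are needed for this case.
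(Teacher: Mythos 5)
Your strategy is sound and, despite the repackaging, is essentially the paper's: combine Lemmas \ref{lemmap1} and \ref{lemmam1} with \eqref{omega12g}, eliminate to obtain $G_\bY^{-1}$ (hence $\ry$, recognised as free Poisson) and a quadratic to which Lemma \ref{giglemma} applies; your change of variable $u=\gxy$ is just a reparametrisation of what the paper does when it solves for $\od$ as a rational function of $G=\gxy$ and reads off $G_\bY^{-1}$ from $\gxy=G_\bY(\od)$. The one genuine divergence is the endgame: the paper derives the quadratic $(cd-1)z^2G^2-(\gamma z^2-z-d)G+\gamma z+d\varphi(\bU)=0$ for $G_{\bX+\bY}$, applies Lemma \ref{giglemma} to identify $\bX+\bY\sim\mu\bigl(\tfrac{cd}{cd-1},\tfrac{\gamma}{cd-1},\tfrac{d}{cd-1}\bigr)$, and then passes to $\bX$ via Lemma \ref{szpolemm} (using that the laws of $\bY$ and $\bX+\bY$ determine that of $\bX$); you instead aim at a quadratic for $G_\bX$ directly, which costs one extra elimination (of $z=u^{-1}+\rx+\ry$, using the now-explicit $\ry$) but spares you Lemma \ref{szpolemm}, and your predicted constant $\delta=\tfrac{d\,\varphi(\bX^{-1})}{cd-1}$ is indeed the right one (evaluate the quadratic at $z=0$ to see $\delta=\beta_{\mathrm{GIG}}\,\varphi(\bX^{-1})$). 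Two caveats you should not wave away: the entire substance of the proof is the elimination you defer with ``should produce, after simplification'' --- in particular you must verify that it yields exactly a quadratic rather than a higher-degree relation with spurious factors --- and the divisions/cancellations it requires (by $\beta+c+G$, and by the factor $\tfrac{1}{\od}-\tfrac{1}{z}$, i.e.\ by $\ry\neq 0$) need the same justification the paper gives, namely that $\oj$ cannot be the identity since $\bY$ would then be degenerate.
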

\begin{proof}
Under the  assumptions of   Theorem \ref{th1}, Lemmas \ref{lemmap1}, \ref{lemmam1}  and equation \eqref{omega12g} imply the following system of equations
\begin{equation} \label{ch1sys1} \left\{\begin{array}{rcl}
\frac{1}{\omega_1(z)}\left(\beta+c+\gxy\right)&=&\left(c+\frac{1}{z}\right)\gxy+\frac{\beta}{z}\\
	\frac{1}{\omega_2(z)}\left(\gamma+\gxy\right)&=&\frac{d\beta}{z^2}+\frac{\gamma}{z}+\left(\frac{d}{z^2}+\frac{1}{z}\right)\gxy\\
	z&=&\omega_1(z)+\omega_2(z)-\frac{1}{\gxy}
\end{array} \right.,
\end{equation}
where $\beta=\varphi(\bU)$ and $\gamma=\varphi(\bY^{-1})$ are  positive constants.

 Moreover  $cd=\varphi(\bV)\varphi(\bV^{-1})>1$ by the Cauchy-Schwarz inequality.

 \begin{lemma}
 	The r-transform of $\bY$ is equal
 	$$r_{\bY}(z)=\frac{cd}{\gamma-(cd-1)z},$$
 	hence  $\bY$ has the free Poisson distribution $\nu\left(\frac{cd}{cd-1},\frac{cd-1}{\gamma}\right)$.
\end{lemma}
\begin{proof}

	From the first equation of \eqref{ch1sys1} we see that
	\begin{equation*}
	\left(\beta+\gxy\right)\left(\frac{1}{\oj}-\frac{1}{z}\right)=c\left(\gxy-\frac{1}{\oj}\right).
	\end{equation*}
Note that $\oj$ is not an identity function because otherwise  formula \eqref{subordination} would imply that  $\bY$ has a  degenerate distribution. This allows us to write
\begin{equation}
\beta+\gxy=\frac{c\left(\gxy-\frac{1}{\oj}\right)}{\frac{1}{\oj}-\frac{1}{z}}.\label{bplusG}
\end{equation}
The second equation of \eqref{ch1sys1} can be written in the following form
\begin{equation*}
\left(\gamma+\gxy\right)\left(\frac{1}{\od}-\frac{1}{z}\right)=d\frac{\beta+\gxy}{z^2}.
\end{equation*}
Using \eqref{bplusG} and formula \eqref{omega12g} we see that the right hand side of the above equation is equal
\begin{equation*}
\begin{split}
\frac{cd}{z^2}\frac{\left(G-\frac{1}{\oj}\right)}{\frac{1}{\oj}-\frac{1}{z}}&=
\frac{cdG}{z}\frac{\oj-\frac{1}{G}}{z-\oj}\\
&=\frac{cdG}{z}\frac{z-\od}{\od-\frac{1}{G}}
=\frac{cdG}{1-\frac{1}{\od G}}\left(\frac{1}{\od}-\frac{1}{z}\right),
\end{split}
\end{equation*}
where  $G$ stands for  $\gxy$ for simplicity of notation.
Comparing both sides and noting that we are allowed to cancel out $\frac{1}{\od}-\frac{1}{z}$ we see that
$$\gamma+G=\frac{cdG}{1-\frac{1}{\od G}}.$$
An easy calculation shows that $$\od=\frac{\gamma+\gxy}{\gxy\left(\gamma-(cd-1)\gxy\right)}.$$
Recalling  that $\gxy=G_{\bY}(\od)$, we can write the last equation as
$$\od=\frac{\gamma+G_{\bY}(\od)}{G_{\bY}(\od)\left(\gamma-(cd-1)G_{\bY}(\od)\right)}.$$
This proves that $$G_{\bY}^{-1}(z)=\frac{\gamma+z}{z\left(\gamma-(cd-1)z\right)}.$$
This allows us to determine the r-transform of $\bY$:
$$r_{\bY}(z)=\frac{\gamma+z}{z\left(\gamma-(cd-1)z\right)}-\frac{1}{z}=\frac{cd}{\gamma-(cd-1)z}.$$
\end{proof}
\begin{lemma}
	 The distribution of $\bX+\bY$ free-GIG distribution $\mu\left(\frac{cd}{cd-1},\frac{\gamma}{cd-1},\frac{d}{cd-1}\right)$.
\end{lemma}
\begin{proof}
We already expressed $\od$ in terms of $G=\gxy$ i.e. $\od=\frac{\gamma+G}{G\left(\gamma-(cd-1)G\right)}$.
Plugging  this formula into the second equation of \eqref{ch1sys1} yields the following equation for $G$ in terms of $z$:
\begin{equation*}
(cd-1)z^2G^2-\left(\gamma z^2-z-d\right)G+\gamma z+d\beta=0.
\end{equation*}
By positivity of $\bX$ and $\bY$ we see that $\gamma, \beta, d>0$.  The result follows now from  Lemma \ref{giglemma}.

\end{proof}
\begin{corollary}
The distribution of $\bX$ is free-GIG  $\mu\left(-\frac{cd}{cd-1},\frac{\gamma}{cd-1},\frac{d}{cd-1}\right)$.
\end{corollary}
\begin{proof}
This follows from  Lemma \ref{szpolemm} and the fact that for free and  compactly supported random variables $\bX$ and $\bY$ the distribution of $\bX$ is uniquely determined by distributions of $\bY$ and $\bX+\bY$.
\end{proof}
\end{proof}
\subsection{The case $(k,l)=(1,2)$}

\begin{theorem}	Let $\bX$ and  $\bY$ be  free, positive, self-adjoint random variables. Let us define $\bU=\left(\bX+\bY\right)^{-1}$ and  $\bV=\bX^{-1}-\left(\bX+\bY\right)^{-1}$. If the following conditions are satisfied 
	\begin{equation}\varphi\left(\bV\mid \bU\right)=c\bI,  \end{equation}
	\begin{equation}\varphi\left(\bV^{2}\mid \bU\right)=b\bI,\end{equation}
	for some constants $c$ and $b$, then $b>c^2$ and $\bX$ has the  free-GIG distribution $\mu\left(-\frac{c^2}{b-c^2},\frac{\rho}{b-c^2},\frac{c}{b-c^2}\right)$ and $\bY$  has  the free Poisson distribution $\nu\left(\frac{c^2}{b-c^2},\frac{b-c^2}{\rho}\right)$, where $\rho$ is some positive constant. 
\end{theorem}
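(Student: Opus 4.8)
The plan is to follow the scheme of the proof of Theorem~\ref{th1}. First I would record what the two regression hypotheses give. By Lemma~\ref{lemmap1}, the condition $\varphi(\bV\mid\bU)=c\bI$ yields \eqref{MYeq1}, and (as in the proof of that lemma, cf.\ \eqref{phixneg1}) the identity $\varphi(\bX^{-1})=\varphi(\bU)+c$; by Lemma~\ref{lemmap2}, the condition $\varphi(\bV^2\mid\bU)=b\bI$ yields the displayed identity of that lemma, in which one may substitute $\varphi(\bX^{-1})=\varphi(\bU)+c$. Together with the universal relation \eqref{omega12g} this is a closed system in the three unknown analytic functions $\oj$, $\od$, $\gxy$ and finitely many scalar constants ($c$, $b$, $\varphi(\bU)$, $\varphi(\bX^{-2})$). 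Before manipulating it I would note that $b=\varphi(\bV^2)\ge\varphi(\bV)^2=c^2$ by the Cauchy--Schwarz inequality; since $0\le\bX\le\bX+\bY$ forces $0\le\bV$, we have $c=\varphi(\bV)>0$ unless $\bV=0$, i.e.\ unless $\bY=0$, and similarly $b>c^2$ unless $\bV$ is a scalar (which again forces $\bY=0$). Discarding this degenerate case, we also note that $\oj$ is not the identity map of $\mathbb{C}^+$, as \eqref{subordination} would then make $\bY$ degenerate.

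The heart of the argument is to eliminate $\oj$. From \eqref{MYeq1} one solves
$$\frac{1}{\oj}=\frac{(cz+1)\gxy+\varphi(\bU)}{z\bigl(\varphi(\bU)+c+\gxy\bigr)},$$
a rational function of $G:=\gxy$ and $z$. Substituting this into the identity of Lemma~\ref{lemmap2}, the term carrying $\oj^{-2}$ collapses to a term linear in $\oj^{-1}$ upon using the very same relation, and after clearing the powers of $z$ in the denominators one is left with a quadratic equation for $G$ whose $G^2$-coefficient is a nonzero constant multiple of $z^2$ (the constant being $c^2-b\neq 0$). Normalizing the leading coefficient to $z^2$ puts the equation in exactly the form of Lemma~\ref{giglemma}, with the parameters $(\lambda,\alpha,\beta)$ of that lemma realized as $\bigl(\tfrac{c^2}{b-c^2},\tfrac{\rho}{b-c^2},\tfrac{c}{b-c^2}\bigr)$, where $\rho$ is a positive scalar assembled from $\varphi(\bU)$, $\varphi(\bX^{-2})$, $b$, $c$. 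Positivity of $\bX$ and $\bY$ ensures the sign conditions of Lemma~\ref{giglemma} are met, so $\bX+\bY$ has the free-GIG distribution $\mu\bigl(\tfrac{c^2}{b-c^2},\tfrac{\rho}{b-c^2},\tfrac{c}{b-c^2}\bigr)$.

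It then remains to identify the laws of $\bY$ and of $\bX$. Using \eqref{omega12g} in the form $\od=z-\oj+\tfrac1{\gxy}$ and inserting the formula for $\oj^{-1}$ found above, $\od$ becomes an explicit rational function of $G$ and $z$. Since $\gxy=G_{\bY}(\od)$ by \eqref{subordination}, eliminating $z$ between this expression and the quadratic ``$G$ as a function of $z$'' obtained above produces $G_{\bY}^{-1}$ in closed form, whence $r_{\bY}(z)=\tfrac{c^2}{\rho-(b-c^2)z}$; comparing with the $r$-transform of the free Poisson law, $\bY$ has distribution $\nu\bigl(\tfrac{c^2}{b-c^2},\tfrac{b-c^2}{\rho}\bigr)$. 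Finally, since $\bX$ and $\bY$ are free and compactly supported, the distribution of $\bX$ is determined by those of $\bY$ and $\bX+\bY$ (via $r_{\bX}=r_{\bX+\bY}-r_{\bY}$, see \eqref{rxpy}); by Lemma~\ref{szpolemm} the free-GIG law $\mu\bigl(-\tfrac{c^2}{b-c^2},\tfrac{\rho}{b-c^2},\tfrac{c}{b-c^2}\bigr)$ reproduces exactly the distributions of $\bY$ and of $\bX+\bY$ just found, so $\bX$ must carry that distribution.

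I expect the main obstacle to be the elimination in the second paragraph: because the identity of Lemma~\ref{lemmap2} involves the $\oj^{-2}$ term, one has to check carefully that after the substitution and after clearing the $z$-denominators everything really does collapse to a genuine quadratic in $G$ whose coefficients fit the rigid template of Lemma~\ref{giglemma}, and one must isolate the correct positive scalar $\rho$ and verify the positivity of the parameters needed to invoke that lemma. Once this quadratic is in hand, the identification of $r_{\bY}$ and then of the law of $\bX$ is essentially forced, exactly as in Theorem~\ref{th1}.
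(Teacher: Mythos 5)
Your proposal follows essentially the same route as the paper: extract the two functional equations from Lemmas \ref{lemmap1} and \ref{lemmap2}, use $\varphi(\bX^{-1})=\varphi(\bU)+c$ and $\varphi(\bX^{-2})=b+2c\varphi(\bU)+\varphi(\bU^2)$ to eliminate the extra constants, substitute the expression for $1/\oj$ from the first equation into the second so that $\oj$ drops out, obtain the quadratic $(b-c^2)z^2G^2-(\rho z^2-(2c^2-b)z-c)G+\rho z+\beta c=0$ with $\rho=2\beta c^2+\alpha c-\beta b$, then recover $r_\bY$ and finally the law of $\bX$ via Lemma \ref{szpolemm}. The one slip is the order in which you establish $\rho>0$: you assert that ``positivity of $\bX$ and $\bY$ ensures the sign conditions of Lemma \ref{giglemma} are met'' already at the stage of identifying the law of $\bX+\bY$, but $\rho=2\beta c^2+\alpha c-\beta b$ contains the negative term $-\beta b$ and its positivity is not a direct consequence of positivity of the variables. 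The paper therefore identifies $\bY$ \emph{first}: it computes $R(z)=\od-1/\gxy=z-\oj$ rationally in $G$ and $z$, uses the quadratic to get $R(z)=c^2/(\rho-(b-c^2)G)$, hence $r_\bY(z)=c^2/(\rho-(b-c^2)z)$, and only then reads off $\rho=c^2/r_\bY(0)\cdot\varphi(\bY)>0$ from $r_\bY(0)=\varphi(\bY)>0$; with $\rho>0$ in hand, Lemma \ref{giglemma} applies to the quadratic for $G$. Since your plan already contains the $r_\bY$ computation, reordering these two steps closes the gap.
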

\begin{proof}

Let us denote  $\beta=\varphi(\bU)$ and $\alpha=\varphi(\bU^2)$.  Lemma \ref{lemmap1} implies the following equality 
\begin{equation}
\frac{1}{\omega_1(z)}\left(\beta+c+\gxy\right)=\left(c+\frac{1}{z}\right)\gxy+\frac{\beta}{z}.\label{ch2eq1}
\end{equation}
The equation provided by Lemma \ref{lemmap2} is  
	\begin{multline}\label{ch2eq2}
\frac{\varphi(\bX^{-2})}{\oj}+\frac{1}{\oj}\left(\varphi(\bX^{-1})+\gxy\right)\left(\frac{1}{\oj}-\frac{2}{z}\right)\\
=\frac{\varphi(\bX^{-2})-b}{z}-\frac{\beta}{z^2}+\left(b-\frac{1}{z^2}\right)\gxy
\end{multline}
and contains two additional constants $\varphi(\bX^{-1})$ and $\varphi(\bX^{-2})$ that we want to express in terms of $c,b,\alpha$ and $\beta$. First note that equality \eqref{phixneg1} i.e. $\varphi\left(\bX^{-1}\mid \bT\right)=c\bI+\bU$ implies $\varphi(\bX^{-1})=c+\beta$. Combining this with \eqref{l3eq}  yields
\begin{equation*}
\begin{split}
\varphi\left(\bX^{-2}\mid\bT\right)&=2\varphi\left(\bX^{-1}\mid\bT\right)\bT^{-1}+b\bI-\bT^{-2}\\
&=b\bI+2c\bU+\bU^2.
\end{split}
\end{equation*}
Hence $\varphi\left(\bX^{-2}\right)=b+2c\beta+\alpha$. 

Replacing $\frac{1}{\oj}\left(\varphi(\bX^{-1})+\gxy\right)$ in \eqref{ch2eq2} by the right hand side of \eqref{ch2eq1} (and simple algebra) gives the final form of system of  equations we can work with:

\begin{equation}\label{ch2sys} \left\{\begin{array}{rcl}
\frac{1}{\omega_1(z)}\left(\beta+c+\gxy\right)&=&\left(c+\frac{1}{z}\right)\gxy+\frac{\beta}{z}\\
\frac{1}{\omega_1(z)}\left(\delta+\frac{\beta}{z}+\left(c+\frac{1}{z}\right)\gxy\right)&=&\frac{\alpha}{z}+b \gxy+\left(\beta+\gxy\right)\left(\frac{1}{z^2}+\frac{2c}{z}\right)\\
z&=&\omega_1(z)+\omega_2(z)-\frac{1}{\gxy}
\end{array} \right.,
\end{equation}

where  $\delta=b+2c\beta+\alpha$.\\

Note, that this time the first and the second equation in \eqref{ch2sys} do involve $\od$  and we can easily calculate $G=\gxy$. Namely let us  divide the second equation by the first one to get
\begin{equation*}
\frac{\delta+\frac{\beta}{z}+\left(c+\frac{1}{z}\right)G}{\beta+c+G}=\frac{\frac{\alpha}{z}+b G+\left(\beta+G\right)\left(\frac{1}{z^2}+\frac{2c}{z}\right)}{\left(c+\frac{1}{z}\right)G+\frac{\beta}{z}}.
\end{equation*}
(Note  that the expression $\beta+c+G$ is non zero as $G$ takes values in $\mathbb{C}^-$ so this division is justified.)

Multiplying both sides by the denominators we arrive after some easy but tedious calculation at the following equation
\begin{equation}
(b-c^2)z^2G^2-\left(\rho z^2-(2c^2-b)z-c\right)G+\rho z+\beta c=0,\label{ch2eqforg}
\end{equation}
where $\rho=2\beta c^2+\alpha c-\beta b$. Note that $b=\varphi(\bV^2)>c^2=\varphi(\bV)^2$ by the  Cauchy-Schwarz inequality.

Now we are  ready to prove the following lemma.
\begin{lemma}
	The r-transform of $\bY$ is equal
	$$r_{\bY}(z)=\frac{c^2}{\rho-(b-c^2)z},$$
	in particular $\rho>0$ and  $\bY$ has the free Poisson distribution $\nu\left(\frac{c^2}{b-c^2},\frac{b-c^2}{\rho}\right)$.
\end{lemma}
\begin{proof}
From equations \eqref{omega12g}  and \eqref{ch2eq1}
we see that

$$R(z):=\od-\frac{1}{\gxy}=z-\oj=z-\frac{\beta+c+G}{\left(c+\frac{1}{z}\right)G+\frac{\beta}{z}},$$
where $G=\gxy$.

Hence
\begin{equation*}
R(z)=\frac{cz(Gz-1)}{\beta+G+czG}.
\end{equation*}
Now we write  equation \eqref{ch2eqforg} as
\begin{equation*}
(b-c^2)(z^2G^2-Gz)-\rho z(Gz-1)+c^2zG+cG+\beta c=0
\end{equation*}
or 
\begin{equation*}
c(\beta+G+czG)=\rho z(Gz-1)-(b-c^2)zG(zG-1).
\end{equation*}
Multiplying both sides by c and dividing by $\beta +G+czG$ we see that
$$c^2=\rho R(z)-(b-c^2)R(z)G$$ or in other words
$$R(z)=\frac{c^2}{\rho-(b-c^2)G}.$$

Recalling the definition of $R(z)$ and the fact that $G=G_{\bY}(\od)$ we get that
$$\od=\frac{1}{G_{\bY}(\od)}+\frac{c^2}{\rho-(b-c^2)G_{\bY}(\od)}.$$
This proves that $G_{\bY}^{-1}(z)=\frac{1}{z}+\frac{c^2}{\rho-(b-c^2)z}$ and that
$$r_{\bY}(z)=\frac{c^2}{\rho-(b-c^2)z}.$$
Since $r_\bY(0)=\varphi(\bY)>0$ we see that $\rho>0$ and  $\bY$ has the free Poisson distribution $\nu\left(\frac{c^2}{b-c^2},\frac{b-c^2}{\rho}\right)$.
\end{proof}
\begin{corollary} The distribution of $\bX$ is free-GIG $\mu\left(-\frac{c^2}{b-c^2},\frac{\rho}{b-c^2},\frac{c}{b-c^2}\right)$ where $\rho=2\beta c^2+\alpha c-\beta b$.
\end{corollary}
\begin{proof}
The Cauchy-Stieltjes transform $G=\gxy$ satisfies the quadratic equation \eqref{ch2eqforg} i.e.
\begin{equation*}
(b-c^2)z^2G^2-\left(\rho z^2-(2c^2-b)z-c\right)G+\rho z+\beta c=0.
\end{equation*}
Since we know that $\rho>0$  Lemma \ref{giglemma} implies that the distribution of $\bX+\bY$ is $\mu\left(\frac{c^2}{b-c^2},\frac{\rho}{b-c^2},\frac{c}{b-c^2}\right)$. The result follows from Lemma \ref{szpolemm}.
\end{proof}
	\end{proof}
\subsection{The case $(k,l)=(1,2)$}
\begin{theorem}
	Let  $\bX$ and  $\bY$ be free positive non-commutative random variables. Let us define $\bU=\left(\bX+\bY\right)^{-1}$ and  $\bV=\bX^{-1}-\left(\bX+\bY\right)^{-1}$. If the following conditions are satisfied 
	\begin{equation}\varphi\left(\bV^{-1}\mid \bU\right)=d\bI,\label{cond33}  \end{equation}
	\begin{equation}\varphi\left(\bV^{-2}\mid \bU\right)=h\bI,\end{equation}
	for some constants $d$ and $h$, then $h>d^2$ and $\bX$ has the free-GIG distribution  $\mu\left(-\frac{h}{h-d^2},\frac{\gamma d^2}{h-d^2}, \frac{ d^3}{h-d^2}\right)$ and $\bY$ has the free Poisson distribution $\nu\left(\frac{h}{h-d^2},\frac{h-d^2}{d^2\gamma}\right)$, where $\gamma$ is some positive constant.
\end{theorem}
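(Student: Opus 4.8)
The plan is to follow the scheme of the two preceding theorems. First I would translate the two regression hypotheses into analytic identities: the condition $\varphi\left(\bV^{-1}\mid\bU\right)=d\bI$ gives, through Lemma \ref{lemmam1}, equation \eqref{MYeq2}, while $\varphi\left(\bV^{-2}\mid\bU\right)=h\bI$ gives, through Lemma \ref{lemmam2}, equation \eqref{MYeq4}, whose ingredients $A(z)$ and $B(z)$ were computed in Lemma \ref{techlemma2}. Together with the subordination identity \eqref{omega12g}, $z=\oj+\od-1/\gxy$, this produces a closed system of three equations in the three unknown functions $\oj$, $\od$ and $G:=\gxy$. As in the earlier cases, the Cauchy--Schwarz inequality applied to $d=\varphi(\bV^{-1})$ and $h=\varphi(\bV^{-2})$ gives at once $h\ge d^2$, with equality excluded because it would force $\bV$ to be a scalar and hence contradict freeness and non-degeneracy.

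The core of the argument is the elimination of $\oj$ and $\od$. Equation \eqref{MYeq2} is affine in $1/\od$, so it can be solved for $1/\od$ as a rational function of $G$ and $z$; plugging this into the expressions $A(z)=\frac{1}{\od}\bigl(1+\varphi(\bY^{-1})/G\bigr)$ and $B(z)=\frac{1}{\od}\bigl(\varphi(\bY^{-2})-\varphi(\bY^{-1})A(z)\bigr)$ supplied by Lemma \ref{techlemma2} turns $A$ and $B$ into explicit rational expressions in $G$, $z$ and the scalars $\varphi(\bU),\varphi(\bU^2),\varphi(\bY^{-1}),\varphi(\bY^{-2})$. Using \eqref{omega12g} to write $\oj=z-\od+1/G$ then makes the whole left-hand side of \eqref{MYeq4} rational in $G$ and $z$, so \eqref{MYeq4} becomes a single polynomial identity relating $G$ and $z$. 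The number of scalar constants is cut down by the moment relations coming from the conditional-expectation identities behind the two lemmas --- $\varphi(\bY^{-1})=\varphi(\bU)+d\varphi(\bU^2)$ from \eqref{drugimoment} and $\varphi\left(\bX^2\bY^{-2}\right)=h\varphi(\bU^2)$ from \eqref{lm2eq1} --- together with matching the expansions of $G,\oj,\od$ at $z=\infty$, which pin down $\varphi(\bX),\varphi(\bX^2),\varphi(\bY^{-2})$ and the rest in terms of a single free parameter $\gamma$ (which may be taken to be $\varphi(\bY^{-1})$).

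The outcome should be a quadratic equation in $G$ of exactly the form required by Lemma \ref{giglemma}, with all coefficients of the correct sign once $h>d^2$ and the positivity of $\bX,\bY$ are used; this identifies the law of $\bX+\bY$ as the free-GIG distribution $\mu\bigl(\tfrac{h}{h-d^2},\tfrac{\gamma d^2}{h-d^2},\tfrac{d^3}{h-d^2}\bigr)$. Then, exactly as in the previous two cases, reading off $1/\od$ from the formula already obtained and using $\gxy=G_\bY(\od)$ yields $G_\bY^{-1}(z)=\tfrac1z+\tfrac{h}{d^2\gamma-(h-d^2)z}$, hence $r_\bY(z)=\tfrac{h}{d^2\gamma-(h-d^2)z}$; since $r_\bY(0)=\varphi(\bY)>0$ this forces $\gamma>0$ and shows that $\bY$ has the free Poisson distribution $\nu\bigl(\tfrac{h}{h-d^2},\tfrac{h-d^2}{d^2\gamma}\bigr)$. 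Finally, the scale parameter $\tfrac{h-d^2}{d^2\gamma}$ of this free Poisson law is the reciprocal of the first scale parameter of the free-GIG law of $\bX+\bY$, so Lemma \ref{szpolemm}, together with the fact that for free compactly supported variables the law of $\bX$ is determined by those of $\bY$ and $\bX+\bY$, gives that $\bX$ is free-GIG $\mu\bigl(-\tfrac{h}{h-d^2},\tfrac{\gamma d^2}{h-d^2},\tfrac{d^3}{h-d^2}\bigr)$.

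I expect the main obstacle to be precisely this elimination step. Equation \eqref{MYeq4} is considerably heavier than its analogues in the cases $(1,-1)$ and $(1,2)$, because of the Boolean-cumulant term $A(z)^2$ and the quadratic $\omega_1^2(z)\gxy$, so the computation is long and it is not a priori clear that it collapses to a \emph{quadratic} in $G$; one must check both that the spurious higher-degree-in-$G$ terms cancel and that the several auxiliary moments genuinely reduce to the single parameter $\gamma$. Getting these cancellations to occur is where the precise forms of $A$ and $B$ from Lemma \ref{techlemma2} and the moment identities \eqref{drugimoment} and \eqref{lm2eq1} do the essential work.
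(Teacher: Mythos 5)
Your proposal follows essentially the same route as the paper: the same two lemmas plus the subordination relation \eqref{omega12g}, the same moment identities ($\varphi(\bY^{-1})=\varphi(\bU)+d\varphi(\bU^2)$, $\varphi(\bX^2)\varphi(\bY^{-2})=h\varphi(\bU^2)$, etc.), Cauchy--Schwarz for $h>d^2$, a quadratic equation for $G=\gxy$ fed into Lemma \ref{giglemma}, and Lemma \ref{szpolemm} to recover the law of $\bX$. The elimination you flag as the main risk does collapse as hoped; the paper keeps it tractable by repeatedly substituting $\beta+G$ from the first equation and cancelling the nonvanishing analytic factors $\tfrac{1}{\od}-\tfrac{1}{z}$, $z-\od$ and $zA(z)$, reducing everything first to the single relation $\gamma+G=\tfrac{\gamma+G}{\od\,G}+\tfrac{h}{d^2}G$ (which already yields the free Poisson law of $\bY$) and only then forming the quadratic in $G$.
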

\begin{proof}
	Let us denote   $\gamma=\varphi(\bY^{-1})$, $\beta=\varphi(\bU)$ and  $\alpha=\varphi(\bU^2)$.
	Lemma \ref{lemmam1} implies the following equality
	\begin{equation*}
	\frac{1}{\omega_2(z)}\left(\gamma+\gxy\right)=\frac{d\beta}{z^2}+\frac{\gamma}{z}+\left(\frac{d}{z^2}+\frac{1}{z}\right)\gxy
	\end{equation*}
that can be written also as
\begin{equation}
\left(\gamma+\gxy\right)\left(\frac{1}{\od}-\frac{1}{z}\right)=\frac{d}{z^2}\left(\beta+\gxy\right).\label{MY3Eq1}
\end{equation}
From Lemma \ref{lemmam2} we get the second equation:
	\begin{multline}\label{ch3eq1}
\varphi(\bX^2)B(z)+A(z)^2\left(\omega_1^2(z)\gxy-\omega_1(z)-\varphi(\bX)\right)=\\
=h\left(\frac{\alpha}{z}+\frac{\beta}{z^2}+\frac{1}{z^2}\gxy\right),
\end{multline}
where
$$A(z)=\frac{\gamma+\gxy}{\omega_2(z)\gxy}\ \textrm{and} \ \ B(z)=\frac{\varphi(\bY^{-2})-\gamma A(z)}{\omega_2(z)}.$$

Our first  goal is to simplify  equation \eqref{ch3eq1}. First note that $\bV^{-1}\bU=\bX\bY^{-1}$. Since $\bX$ and $\bY^{-1}$ are free we get
$$\varphi(\bX)\varphi(\bY^{-1})=\varphi(\bV^{-1}\bU)=\varphi(\varphi(\bV^{-1}\mid\bU)\bU)=d\beta$$
Hence $\varphi(\bX)=\frac{d\beta}{\gamma}$. Similarly $\bV^{-1}=\bX\bY^{-1}(\bX+\bY)=\bX\bY^{-1}\bX+\bX$. Taking expectation we see that $$\varphi(\bV^{-1})=\varphi(\bX\bY^{-1}\bX)+\varphi(\bX)=\varphi(\bX^2\bY^{-1})+\varphi(\bX)$$
by traciality. From this we get $\varphi(\bX^2)\gamma=d\left(1-\frac{\beta}{\gamma}\right)$. Next note that equation \eqref{lm2eq1} i.e.
\begin{equation*}
\varphi\left(\bY^{-1}\bX^{2}\bY^{-1}\mid\bU\right)=h\bU^{2}
\end{equation*} implies that $\varphi(\bX^2)\varphi(\bY^{-2})=h\alpha$.
Taking this into account we see that
$$\varphi(\bX^2)B(z)=\frac{\varphi(\bX^2)\varphi(\bY^{-2})-\varphi(\bX^2)\gamma A(z)}{\omega_2(z)}=\frac{h\alpha}{\od}-d\left(1-\frac{\beta}{\gamma}\right)\frac{A(z)}{\od}$$
Thus the left hand side of \eqref{ch3eq1} is equal
$$\frac{h\alpha}{\od}-d\left(1-\frac{\beta}{\gamma}\right)\frac{A(z)}{\od}+A(z)^2\left(\omega_1^2(z)\gxy-\omega_1(z)\right)-\frac{d\beta}{\gamma}A(z)^2$$

An easy calculation shows that $$d\left(1-\frac{\beta}{\gamma}\right)\frac{A(z)}{\od}+\frac{d\beta}{\gamma}A(z)^2=A(z)\frac{d(\beta+\gxy)}{\od\gxy}$$
and that
$$\omega_1^2(z)\gxy-\omega_1(z)=\oj\gxy\left(\oj-\frac{1}{\gxy}\right)=\gxy\oj\left(z-\od\right),$$
where in the last equality we used formula \eqref{omega12g}.

Consequently, equation \eqref{ch3eq1} takes on the following form
\begin{multline*}
\frac{h\alpha}{\od}+\gxy\oj\left(z-\od\right)A(z)^2-A(z)\frac{d(\beta+\gxy)}{\od\gxy}=\\
h\left(\frac{\alpha}{z}+\frac{\beta}{z^2}+\frac{1}{z^2}\gxy\right)
\end{multline*}
or equivalently
\begin{multline*}
h\alpha\left(\frac{1}{\od}-\frac{1}{z}\right)+z\gxy\oj\od\left(\frac{1}{\od}-\frac{1}{z}\right)A(z)^2=\\
h\frac{\beta+\gxy}{z^2}+A(z)\frac{d(\beta+\gxy)}{\od\gxy}.
\end{multline*}
We can now plug $\beta+\gxy$ calculated from \eqref{MY3Eq1} and cancel out the common term i.e. $\frac{1}{\od}-\frac{1}{z}$. The cancellation is allowed since both sides of the equation are analytic on $\mathbb{C}^+$ and $\od$ cannot be the identity function as it would contradict positivity of $\bX$. This yields  the following simpler equation
\begin{equation}\label{ch3eq1.5}
\begin{split}
h\alpha+z\oj\od\gxy A(z)^2&=\frac{h}{d}(\gamma+\gxy)+z^2A(z)\frac{\gamma+\gxy}{\od\gxy}\\
&=\frac{h}{d}(\gamma+\gxy)+z^2A(z)^2.
\end{split}
\end{equation}
To proceed further we need to express $\alpha$ in terms of other constants. Note that
$$\bY^{-1}-\bU\bV^{-1}\bU=\bU\bU^{-1}\bY^{-1}-\bU\bX\bY^{-1}=\bU(\bU^{-1}-\bX)\bY^{-1}=\bU.$$
After taking expectation and using regression condition \eqref{cond33} we see that $\gamma-d\alpha=\beta$. In other words 
$$\alpha=\frac{\gamma-\beta}{d}.$$
This fact and  \eqref{MY3Eq1} imply that
$$\frac{h}{d}(\gamma+\gxy)-h\alpha=h\frac{\beta+\gxy}{d}=\frac{hz^2}{d^2}\left(\gamma+\gxy\right)\left(\frac{1}{\od}-\frac{1}{z}\right).$$
The right hand side of the above expression can be written as $$\frac{hz\gxy}{d^2}A(z)(z-\od).$$
This means that   we can rewrite equation \eqref{ch3eq1.5} as
$$z\oj\od\gxy A(z)^2=\frac{hz\gxy}{d^2}A(z)(z-\od)+z^2A(z)^2$$
or after canceling out the common term $zA(z)\neq 0$
$$\oj\od\gxy A(z)=zA(z)+\frac{h}{d^2}\gxy(z-\od).$$
Equation \eqref{omega12g} implies that $\oj\gxy=\gxy(z-\od)+1$. Plugging this into the above equation we get
$$\gxy(z-\od)\od A(z)=(z-\od)A(z)+\frac{h}{d^2}\gxy(z-\od.)$$

Since $z-\od$ is a non zero function  and both sides are analytic on $\mathbb{C}^+$ we obtain 
\begin{equation}\label{ch3eq2}
 \gamma+\gxy=\frac{\gamma+\gxy}{\od\gxy}+\frac{h}{d^2}\gxy.
\end{equation}

\begin{lemma}
	The r-transform of $\bY$ is equal
	$$r_{\bY}(z)=\frac{h}{d^2\gamma-(h-d^2)z},$$
 and hence $\bY$ has the  free Poisson distribution $\nu\left(\frac{h}{h-d^2},\frac{h-d^2}{d^2\gamma}\right)$.
\end{lemma}
\begin{proof}
Equation \eqref{ch3eq2} implies
\begin{equation*}
\od=\frac{d^2(\gamma+\gxy)}{\gxy\left(d^2\gamma-(h-d^2)\gxy\right)}.
\end{equation*}
Since $\gxy=G_{\bY}(\od)$ we see that
$$G^{-1}_\bY(z)=\frac{d^2(\gamma+z)}{z\left(d^2\gamma-(h-d^2)z\right)}.$$
Thus  $$r_{\bY}(z)=G^{-1}_\bY(z)-\frac{1}{z}=\frac{h}{d^2\gamma-(h-d^2)z}.$$
Now it is enough to note that the  Cauchy-Schwarz inequality implies $h>d^2$.
\end{proof}
\begin{lemma}
	The distribution of $\bX$   is free-GIG   $\mu\left(-\frac{h}{h-d^2},\frac{\gamma d^2}{h-d^2}, \frac{ d^3}{h-d^2}\right)$.

\end{lemma}
\begin{proof}
Equations \eqref{MY3Eq1}  and \eqref{ch3eq2} imply the following quadratic equation for $G=\gxy$
\begin{equation*}
(h-d^2)z^2G^2-d^2(\gamma z^2-z-d)G+d^2(\gamma z+d\beta)=0.
\end{equation*}
Since all parameters are obviously positive and $h>d^2$ we see that	Lemma \ref{giglemma} implies that the distribution of $\bX+\bY$ is free-GIG $\mu\left(\frac{h}{h-d^2},\frac{\gamma d^2}{h-d^2}, \frac{ d^3}{h-d^2}\right)$.

  The result follows  now from Lemma \ref{szpolemm}.
\end{proof}
\end{proof}
\subsection*{Acknowledgment} The author thanks J. Weso\l{}owski and K. Szpojankowski for helpful comments and discussions.
\bibliographystyle{plain}
\bibliography{MarcinSwiecaMYbib} 
\end{document}